\documentclass[12pt]{amsart}
\usepackage[dvips]{epsfig}
\usepackage{graphics}
\usepackage{latexsym}
\usepackage{verbatim}
\usepackage{amsmath}
\usepackage{amsthm}
\usepackage{amssymb}
\usepackage{hyperref}
\usepackage[]{hyperref}
\usepackage{float}
\hypersetup{
    colorlinks=true,%
    filecolor=black,%
    linkcolor=black,%
    urlcolor=black  %
}
\usepackage [table]{xcolor}
\usepackage{multirow}
\usepackage{float}
\usepackage{tikz}
\usepackage{subfig}


\graphicspath{{images/}} 
\tracingstats=2		

\setcounter{tocdepth}{2}


\newtheorem{theorem}{Theorem}[section]
\newtheorem{lemma}[theorem]{Lemma}

\newtheorem{proposition}[theorem]{Proposition}

\newtheorem{remark}[theorem]{Remark}


\newcommand\CC{\hbox{C\kern -.58em {\raise .54ex \hbox{$\scriptscriptstyle |$}}\kern-.55em {\raise .53ex \hbox{$\scriptscriptstyle |$}}}}
\newcommand\NN{\hbox{I\kern-.2em\hbox{N}}}
\newcommand\RR{\mathbb{R}}

\newcommand\ZZ{{{\rm Z}\kern-.28em{\rm Z}}}


\newcommand\Gradxp{ \nabla_{\mathbf{x}}}

\newcommand\Gradvp{ \nabla_{\mathbf{v}}}
\newcommand\Div{ \textrm{div}}

\newcommand\thDiv{ \textrm{\emph{div}}}
\newcommand\thRot{ \textrm{\emph{curl}}}

\newcommand\ds{ \displaystyle }
\newcommand\vp{ \varphi }
\renewcommand\d{\partial}
\newcommand\dd{\textrm{d}}
\newcommand\thdd{\textrm{\emph{d}}}


\newcommand\bB{{\mathbf B}}

\newcommand\bE{{\mathbf E}}
\newcommand\bF{{\mathbf F}}

\newcommand\bU{{\mathbf U}}
\newcommand\bV{{\mathbf V}}

\newcommand\bX{{\mathbf X}}
\newcommand\bY{{\mathbf Y}}

\newcommand\bv{{\mathbf v}}
\newcommand\bw{{\mathbf w}}
\newcommand\bx{{\mathbf x}}
\newcommand\by{{\mathbf y}}


\setlength{\oddsidemargin}{-0.5cm} 
\setlength{\evensidemargin}{-0.5cm}
\setlength{\textwidth}{17.cm} 
\setlength{\textheight}{24.cm}
\setlength{\topmargin}{-1.cm}


\def\eps{\varepsilon }

\def\signFF{\bigskip\bigskip\hspace{80mm}
\vbox{{\sc Francis Filbet\par\vspace{3mm}
Universit\'e de Toulouse III \& IUF \par
UMR5219, Institut de Math\'ematiques de Toulouse,\par
118, route de Narbonne\par
F-31062 Toulouse cedex,  FRANCE
\par\vspace{3mm}e-mail:} francis.filbet@math.univ-toulouse.fr }}

\def\signLMR{\bigskip\bigskip\hspace{80mm}
\vbox{{\sc  Luis Miguel Rodrigues \par\vspace{3mm}
Universit\'e de Rennes 1,\par
UMR6625, IRMAR,\par
263 avenue du General Leclerc,\par
F-35042 Rennes Cedex,  FRANCE
\par\vspace{3mm}e-mail:}  luis-miguel.rodrigues@univ-rennes1.fr }}




\begin{document}

\title[PIC methods for inhomogenous strongly magnetized
plasmas]{Asymptotically preserving particle-in-cell methods for inhomogenous strongly magnetized
plasmas}

\author{Francis Filbet, Luis Miguel Rodrigues}

\maketitle

\begin{abstract}
We propose a class of Particle-In-Cell (PIC) methods for the Vlasov-Poisson system with a strong and inhomogeneous external magnetic field with fixed direction, where we
focus on the motion of particles in the plane orthogonal to the
magnetic field (so-called poloidal directions). In this regime, the time step can
be subject to stability constraints related to the smallness of
Larmor radius and plasma frequency. To avoid this limitation, our approach is based on first and higher-order semi-implicit numerical
schemes already validated on dissipative systems \cite{BFR:15} and for
homogeneous magnetic fields \cite{FR16}. Thus, when the magnitude of the external magnetic
field becomes large, this method provides a consistent PIC
discretization of the guiding-center system taking into account variations of the magnetic field. We carry out some theoretical proofs and perform several numerical experiments that establish a solid validation of the method and its underlying concepts.
 \end{abstract}

\vspace{0.1cm}

\noindent 
{\small\sc Keywords.}  {\small High-order time discretization;
  Vlasov-Poisson system; Inhomogeneous magnetic field; Particle methods.}


\section{Introduction} 
\setcounter{equation}{0}
\label{sec:1}
We  consider a magnetic field acting in the vertical direction and
only depending on time $t\geq 0$ and the first two components $\bx=(x_1,x_2)\in \RR^2$,
that is,  
$$
\bB(t,\bx) \,\,=\,\, \frac{1}{\eps} \, \left( \begin{array}{l}0\\0\\b(t,\bx)\end{array}\right), 
$$
with  $b(t,\bx)>0$, where $\varepsilon>0$ is a small parameter related to the ratio between the reciprocal Larmor frequency and the advection time scale. This magnetic field is
applied to confine a plasma constituted of a large number of charged particles, modelled by a distribution function $f^\eps$ solution to the Vlasov equation coupled with the Poisson equation satisfied by the self-consistent electrical potential $\phi^\eps$. Here we focus on the long time behavior of the plasma in a plane orthogonal to the external magnetic field. Therefore we consider the following form of the two dimensional Vlasov-Poisson system with an external strong magnetic field
\begin{equation}
\label{eq:vlasov2d}
\left\{
\begin{array}{l} 
\displaystyle{\varepsilon\frac{\partial f^\eps}{\partial t}\,+\,\mathbf{v}\cdot\Gradxp f^\eps \,+\,\left(
  \mathbf{E}^\eps(t,\bx) \,-\, b(t,\bx)\frac{\bv^\perp}{\eps} \right)\cdot\Gradvp f^\eps
\,=\, 0,}
\\
\,
\\
\displaystyle{\bE^\eps = - \Gradxp \phi^\eps, \quad -\Delta_{\bx} \phi^\eps =
  \rho^\eps,\quad \rho^\eps=\int_{\RR^2} f^\eps d\bv,}
\end{array}\right.
\end{equation}
where we use notation $\bv^\perp=(-v_2,v_1)$ and for simplicity we set all immaterial physical constants to one. The term $\varepsilon$ in
front of the time derivative of $f^\eps$ stands for the fact that we want to follow
the solution on times sufficiently large to observe non trivial averaged evolutions.

We want to  construct numerical solutions to the Vlasov-Poisson system \eqref{eq:vlasov2d} by particle methods (see \cite{birdsall}), which consist in
approximating the distribution function by a finite number of
macro-particles. The trajectories of these particles are determined from
the characteristic curves corresponding to the Vlasov equation
\begin{equation}
\label{traj:00}
\left\{
\begin{array}{l}
\ds{\varepsilon\frac{\dd\bX^\eps}{\dd t} \,=\, \bV^\eps,} 
\\
\,
\\
\ds{\varepsilon\frac{\dd\bV^\eps}{\dd t} \,=\, -\frac{1}{\varepsilon} \,{b}(t,\bX^\eps)\,(\bV^\eps)^{\perp}    \,+\, \bE^\eps(t,\bX^\eps), }
\\
\,
\\
\bX^\eps(t^0) = \bx^0, \, \bV^\eps(t^0) = \bv^0,
\end{array}\right.
\end{equation}
where we use the conservation of $f^\eps$ along the characteristic curves  
$$
f^\eps(t, \bX^\eps(t),\bV^\eps(t)) = f^\eps(t^0, \bx^0, \bv^0)
$$
and the electric field is computed from a discretization of the Poisson equation
in (\ref{eq:vlasov2d}) on a mesh of the physical space.
 
Before describing and analyzing a class of numerical methods for the
Vlasov-Poisson system \eqref{eq:vlasov2d} in the presence of a strong external inhomogeneous magnetic field, we first briefly expound what may be expected from the continuous model in the limit $\eps\to0$.

\subsection{Asymptotics with external electromagnetic fields}\label{s:external-asymptotics}

To gain some intuition, we first discuss the case when all electromagnetic fields $(\bE^\eps, b)$ are known. Conclusions of the present section may actually be completely justified analytically. Yet we do not pursue this line of exposition here to keep the discussion as brief and tight as possible.

Explicitly, in the present section we consider given (independent of $\eps$) smooth electromagnetic fields  $(\bE^\eps, b)$ 
and we 
assume that $b$ is bounded below away from zero, that is, we assume that there exists $b_0>0$ such that
\begin{equation}
\label{hyp:1}
 b(t,\bx) \geq b_0 \quad \forall \,(t,\bx)\in \RR^+\times\RR^2.
\end{equation}

In the limit $\eps\to0$ one expects oscillations occurring on typical time scales $O(1/\eps^2)$ to coexist with a slow dynamics evolving on a time scale $O(1)$. We sketch now how to identify a closed system describing at main order the slow evolution. To begin with, note that from the second line of system~\eqref{traj:00} it does follow that $\bV^\eps$ oscillates at order $1/\eps^2$ thus remains bounded and converges weakly\footnote{Though we do not want to be too precise here, let us mention that in the present discussion \emph{weakly} and \emph{strongly} refer to the weak-* and strong topologies of $L^\infty$ and that the weak convergences that we encounter actually correspond to strong convergence in $W^{-1,\infty}$.} to zero. As we detail below, one may also combine both lines of the system to obtain
\begin{equation}\label{res:0bis}
\frac{\dd}{\dd t}\left( \bX^\eps - \eps \,\frac{(\bV^\eps)^\perp}{b(t,\bX^\eps)}\right)  \,=\,  \ \bF(t,\bX^\eps) 
\,-\,(\bV^\eps)^\perp\left[
\,\dd_\bx \left(\frac1b\right)(t,\bX^\eps)(\bV^\eps)
\,+\,\eps\frac{\d}{\d t}\left(\frac1b\right)(t,\bX^\eps) \,\right]
\end{equation}
where the force field $\bF$ is the classical electric drift, given by
$$
\bF(t,\bx) \,:=\, -\frac{1}{b(t,\bx)}\, \bE^{\perp}(t,\bx),\quad \forall \,(t,\bx)\in \RR^+\times\RR^2\,.
$$  
Indeed
\begin{eqnarray*}
\,\frac{\dd}{\dd t} \bX^\eps &=& 
\frac1\eps\bV^\eps
\,=\,
\frac{\eps}{b(t,\bX^\eps)}\left(\frac{\dd\bV^\eps}{\dd t}\right)^\perp+\,\bF(t,\bX^\eps)\\
&=& \eps \frac{\dd}{\dd t} \left(\frac{(\bV^\eps)^\perp}{b(t,\bX^\eps)}\right)+\,\bF(t,\bX^\eps)
-\,\eps(\bV^\eps)^\perp\left(\frac{\d}{\d t}\left(\frac1b\right)(t,\bX^\eps) + \dd_\bx \left(\frac1b\right)(t,\bX^\eps) \,\left(\frac{\dd\bX^\eps}{\dd t}\right)\right)\\
&=& \eps \frac{\dd}{\dd t} \left(\frac{(\bV^\eps)^\perp}{b(t,\bX^\eps)}\right)+\,\bF(t,\bX^\eps)
-\,(\bV^\eps)^\perp\left(\eps\frac{\d}{\d t}\left(\frac1b\right)(t,\bX^\eps) + \dd_\bx \left(\frac1b\right)(t,\bX^\eps) \,(\bV^\eps)\right).
\end{eqnarray*}
This shows that $\bX^\eps$ evolves slowly but, as such, does not provide a closed asymptotic evolution in the limit $\eps\to0$. Indeed, except in the case when $b$ is constant dealt with in \cite{FR16} we also need to know what happens to expressions that are quadratic in $\bV^\eps$ and this does not follow readily from the weak convergence of $\bV^\eps$.

\begin{remark}\label{rk:homogeneous}
Incidentally note that, in contrast, when $b$ is constant, as in \cite{FR16}, the formulation of equation~\eqref{res:0bis} is already sufficient to identify a guiding center evolution and obtain that $\bX^\eps$ converges to $\bY$ solving
\begin{equation}\label{gc:-1}
\frac{\dd\bY}{\dd t} \,=\,  \bF(t,\bY)\,.
\end{equation}
\end{remark}

The missing piece of information may be intuited from
\begin{lemma}
\label{lmm:01}
Consider $\bX^\eps=(x_1^\eps,x_2^\eps)$ and
$\bV^\eps=(v_1^\eps,v_2^\eps)$ solving \eqref{traj:00}. Then we have
\begin{equation}
\frac{\thdd}{\thdd t} \left( \,\frac{1}{2}\,\|\bV^\eps\|^2 \,-\, \eps \,\bF(t,\bX^\eps)\cdot
    \bV^\eps\,\right)  \,\,=\, \, -\bV^\eps \,\cdot\,\thdd_\bx \bF(t,\bX^\eps)\,(\bV^\eps)
\,\,-\,\, \eps  \,\bV^\eps\cdot\frac{\d \bF}{\d t}(t,\bX^\eps),
\label{eq:modulus} 
\end{equation}
and with $\bE=(E_1,E_2)$
\begin{equation}
\label{eq:VV}
\left\{
\begin{array}{l}
\ds\frac{1}{2}\,\frac{\thdd}{\thdd t} \left( \,|v^\eps_1|^{2} -  |v^\eps_2 |^{ 2}\,\right) 
\,=\, 
\frac{b(t,\bX^\eps)}{\eps^2} \,2v^\eps_1 \,v^\eps_2
\,+\,\frac{v^\eps_1\,E_1(t,\bX^\eps) - v_2\,E_2(t,\bX^\eps)}{\eps},
\\
\,
\\
\ds\frac12\frac{\thdd}{\thdd t} \left(\, 2v^\eps_1\, v^\eps_2  \;\right) \,=\,
-\, \frac{b(t,\bX^\eps)}{\eps^2} \,\left(\,|v^\eps_1|^{2}\,-\,|v^\eps_2|^{2}\,\right)
\,+\,\frac{v_2^\eps\,E_1(t,\bX^\eps) + v_1^\eps\,E_2(t,\bX^\eps)}{\eps}\,.
\end{array}\right.
\end{equation}
\end{lemma}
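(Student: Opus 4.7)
The statement is essentially a direct consequence of~\eqref{traj:00}, so I would prove it by honest computation, organized so that the relevant algebraic cancellations are visible.

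\textbf{Proof of the second identity~\eqref{eq:VV}.} I would start with this one since it is the cleaner computation and sets up the notation. Writing out the two scalar components of the second line of~\eqref{traj:00}, I get
\[
\frac{\dd v_1^\eps}{\dd t} \,=\, \frac{b(t,\bX^\eps)}{\eps^2}\,v_2^\eps \,+\, \frac{E_1(t,\bX^\eps)}{\eps},\qquad
\frac{\dd v_2^\eps}{\dd t} \,=\, -\,\frac{b(t,\bX^\eps)}{\eps^2}\,v_1^\eps \,+\, \frac{E_2(t,\bX^\eps)}{\eps}.
\]
Then $\tfrac12 \tfrac{\dd}{\dd t}(|v_1^\eps|^2 - |v_2^\eps|^2) = v_1^\eps \tfrac{\dd v_1^\eps}{\dd t} - v_2^\eps \tfrac{\dd v_2^\eps}{\dd t}$ and $\tfrac12 \tfrac{\dd}{\dd t}(2v_1^\eps v_2^\eps) = v_2^\eps \tfrac{\dd v_1^\eps}{\dd t} + v_1^\eps \tfrac{\dd v_2^\eps}{\dd t}$ give the two lines of~\eqref{eq:VV} by direct substitution; the $b/\eps^2$ terms combine cleanly because of the antisymmetry of the rotation, while the electric contributions only contribute at order $1/\eps$.

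\textbf{Proof of the modulus identity~\eqref{eq:modulus}.} Taking the dot product of the velocity equation in~\eqref{traj:00} with $\bV^\eps$ and using the pointwise identity $\bV^\eps\cdot(\bV^\eps)^\perp = 0$, I obtain
\[
\frac{\dd}{\dd t}\!\left(\tfrac12\|\bV^\eps\|^2\right) \,=\, \frac{1}{\eps}\,\bE^\eps(t,\bX^\eps)\cdot\bV^\eps.
\]
Next I differentiate $\bF(t,\bX^\eps)\cdot\bV^\eps$ with the chain rule, using $\dd \bX^\eps/\dd t = \bV^\eps/\eps$:
\[
\eps\,\frac{\dd}{\dd t}\bigl(\bF(t,\bX^\eps)\cdot\bV^\eps\bigr) \,=\, \eps\,\frac{\d \bF}{\d t}\cdot\bV^\eps \,+\, \bV^\eps\cdot \dd_\bx\bF\,(\bV^\eps)\,+\,\eps\,\bF\cdot \frac{\dd\bV^\eps}{\dd t}.
\]
The last summand is the crux: substituting $\eps\,\dd\bV^\eps/\dd t = -(b/\eps)(\bV^\eps)^\perp + \bE^\eps$ and invoking the two algebraic identities
\[
\bF\cdot\bE^\eps \,=\, -\tfrac{1}{b}\,(\bE^\eps)^\perp\cdot\bE^\eps \,=\, 0,
\qquad
\bF\cdot(\bV^\eps)^\perp \,=\, -\tfrac{1}{b}\,(\bE^\eps)^\perp\cdot(\bV^\eps)^\perp \,=\, -\tfrac{1}{b}\,\bE^\eps\cdot\bV^\eps,
\]
(the second using $\ba^\perp\cdot\bb^\perp=\ba\cdot\bb$) yields $\eps\,\bF\cdot \dd\bV^\eps/\dd t = \bE^\eps\cdot\bV^\eps/\eps$. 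Subtracting the resulting expression for $\eps\,\dd(\bF\cdot\bV^\eps)/\dd t$ from $\dd(\tfrac12\|\bV^\eps\|^2)/\dd t$ makes the two $\bE^\eps\cdot\bV^\eps/\eps$ contributions cancel, leaving exactly~\eqref{eq:modulus}.

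\textbf{Main obstacle.} There is no serious conceptual obstacle: the whole lemma is a bookkeeping exercise. The only place where one must be careful is the signs and the use of $\ba^\perp\cdot\bb^\perp = \ba\cdot\bb$ in handling $\eps\,\bF\cdot\dd\bV^\eps/\dd t$, since it is precisely this identity that produces the cancellation of the singular $1/\eps$ term and thereby shows that $\tfrac12\|\bV^\eps\|^2 - \eps\,\bF\cdot\bV^\eps$ is the right slow quantity to track — a fact that will presumably be crucial in closing the asymptotic evolution announced in the previous subsection.
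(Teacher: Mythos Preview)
Your proof is correct and follows essentially the same approach as the paper: for~\eqref{eq:VV} both you and the paper multiply the velocity equation by the test vectors $(v_1^\eps,-v_2^\eps)$ and $(v_2^\eps,v_1^\eps)$, and for~\eqref{eq:modulus} both arguments hinge on the identity $\bV^\eps\cdot\tfrac{\dd\bV^\eps}{\dd t}=\eps\,\bF\cdot\tfrac{\dd\bV^\eps}{\dd t}$ followed by the product rule. The only cosmetic difference is that the paper obtains this identity in one stroke by rewriting the velocity equation as $\bV^\eps/\eps=\bF+(\eps/b)(\dd\bV^\eps/\dd t)^\perp$ and using $(\dd\bV^\eps/\dd t)^\perp\cdot\dd\bV^\eps/\dd t=0$, whereas you reach the same cancellation by expanding $\eps\,\bF\cdot\dd\bV^\eps/\dd t$ via $\bF\cdot\bE^\eps=0$ and $\ba^\perp\cdot\bb^\perp=\ba\cdot\bb$.
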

\begin{proof}
First note that the second equation in \eqref{traj:00} may also be written as
$$
 \frac{\bV^\eps}{\eps} \,=\, \bF(t,\bX^\eps) \,+\,\frac{\eps}{b(t,\bX^\eps)}\,\left(\frac{\dd\bV^\eps}{\dd t}\right)^\perp\,. 
$$
Therefore
\begin{eqnarray*}
\frac{1}{2}\,\frac{\dd}{\dd t} \|\bV^\eps\|^2 &=& 
\bV^\eps\,\cdot \frac{\dd\bV^\eps}{\dd t}
\,=\,\eps\,\bF(t,\bX^\eps) \cdot
\frac{\dd\bV^\eps}{\dd t}\\
&=& \eps \frac{\dd}{\dd t} \left(\bF(t,\bX^\eps)\cdot \bV^\eps\right) 
- \eps\,\bV^\eps\cdot \left(\frac{\d\bF}{\d t}(t,\bX^\eps) + \dd_\bx \bF(t,\bX^\eps) \, \frac{\dd\bX^\eps}{\dd t} \right)\\
&=& -\bV^\eps\cdot \dd_\bx \bF(t,\bX^\eps) \,(\bV^\eps)\;+\,\eps \left(\,\frac{\dd}{\dd t} \left[\bF^\eps(t,\bX^\eps)\cdot \bV^\eps\right] \,- \,\bV^\eps\cdot \frac{\d\bF}{\d t}(t,\bX^\eps)\,\right).
\end{eqnarray*}
Hence \eqref{eq:modulus}. Likewise we may obtain the first equation in \eqref{eq:VV} --- for $(|v^\eps_1|^2 -|v^\eps_2|^2)$ --- by multiplying the second equation of \eqref{traj:00} by $(v^\eps_1,-v^\eps_2)/\eps$ and the equation for $v^\eps_1\,v^\eps_2$ by multiplying it by
$(v^\eps_2,v^\eps_1)/\eps$.
\end{proof}

This crucial lemma suggests that 
\begin{itemize}
\item the  microscopic kinetic energy  $\frac{1}{2}\,\|\bV^\eps\|^2$ also evolves on a slow scale; 
\item the two variables  $2v^\eps_1\,v^\eps_2$ and $|v^\eps_1|^{2}\,-\,|v^\eps_2|^{2}$ oscillate at scale $1/\eps^2$ thus converge weakly to zero. 
\end{itemize}
Indeed, the couple $\left(2v_1^\eps\,v_2^\eps\,,\,|v_1^\eps|^{2}\,-\,|v_2^\eps|^{2}\right)$ solves a forced spring-mass system with non constant frequency of
oscillation of typical size $1/\eps^2$. Since the foregoing three variables generates all expressions quadratic in $\bV^\eps$, this is indeed sufficient to conclude.
  
From previous considerations it follows that when $(\bX^\eps,\bV^\eps)$ solves \eqref{traj:00} the couple $(\bX^\eps,\frac{1}{2}\|\bV^\eps\|^2)$ converges strongly to some $(\bY,g)$ as $\eps\to0$. To identify an evolution for the limiting $(\bY,g)$, we write relevant quadratic terms of \eqref{res:0bis} and \eqref{eq:modulus} in terms of quantities of Lemma~\ref{lmm:01}. First  
\begin{eqnarray*}
-\bV^{\eps}\,\dd_\bx\left(\frac1b\right)(t,\bX^\eps)\,(\bV^\eps)&=& -\frac{1}{2}
\|\bV^\eps\|^2 \,\nabla_\bx \left(\frac1b\right)(t,\bX^\eps) \,\,-\,\,  \frac{1}{2}
\left[ |v_1^{\eps}|^2 -|v_2^{\eps}|^2  \right]\,\left(\begin{array}{l} \d_{x_1}\\-\d_{x_2}\end{array}\right) \left(\frac1b\right)(t,\bX^\eps)
\\
&&-\,\,
\left[ v_1^{\eps} \,v_2^{\eps}  \right]\,
\left(\begin{array}{l} \d_{x_2}\\\d_{x_1}\end{array}\right)\left(\frac1b\right)(t,\bX^\eps)
\end{eqnarray*}
converges weakly to
$$
-g\,\nabla_\bx \left(\frac1b\right)(t,\bY)
$$
as $\eps\to0$. Thus, taking the limit $\eps\to0$ in \eqref{res:0bis} gives 
\begin{equation}
\frac{\dd\bY}{\dd t} \,=\,  \bF(t,\bY)   \,+\, g \,\frac{\nabla_\bx^\perp b}{b^2}(t,\bY). 
\label{gc:1}
\end{equation}
Likewise, $\bV^\eps \,\cdot\,\dd_\bx\bF(t,\bX^\eps)\,(\bV^\eps)$ converges weakly to $g\,\Div_\bx(\bF)(t,\bY)$ so that
\begin{equation}
\frac{\dd g}{\dd t} \,=\,  -\Div_\bx(\bF)(t,\bY)\,g.
\label{gc:2}
\end{equation}

Gathering the latter results, we get that $(\bX^\eps,\frac{1}{2}\|\bV^\eps\|^2)$ converges strongly to $(\bY,g)$ solving
\begin{equation}
\label{traj:limit}
\left\{
\begin{array}{l}
\ds{\frac{\dd\bY}{\dd t} \,=\,  \bF(t,\bY)   \,+\, g \,\frac{\nabla_\bx^\perp b}{b^2}(t,\bY) },
\\
\,
\\
\ds{\frac{\dd g}{\dd t} \,=\,  -\Div_\bx (\bF)(t,\bY)\,g},
\\
\,
\\
\bY(t^0) = \bx^0, \quad g(t^0)\,=\,e^0,
\end{array}\right.
\end{equation}
where $e^0\,=\,\frac{1}{2}\|\bv^0\|^2$.

\begin{remark}\label{rk:adiabatic}
Observe that $\thDiv_\bx(\bF)=-\bE^\perp\cdot\nabla_\bx(1/b)+\thRot(\bE)/b$. In particular, as expected, the limiting system \eqref{traj:limit} contains the fact that when $b$ depends only on time and $\bE$ derives from a potential, as in the Vlasov-Poisson case, the microscopic kinetic energy is conserved. Likewise, system~\eqref{traj:limit} also implies that 
$$
\frac{\dd}{\dd t}\left(\frac{g}{b(t,\bY)}\right) 
\,=\,-\frac{g}{b(t,\bY)^2}\left(\d_tb+\thRot(\bE)\right)(t,\bY)
$$
so that for the limiting system the classical adiabatic invariant $g/b$ becomes an exact invariant when the Maxwell-Faraday equation
$$
\d_tb+\thRot(\bE)\,=\,0
$$
holds. The latter occurs in particular when $b$ depends only on the space variable and $\bE$ derives from a potential, as in the Vlasov-Poisson case. For consistency's sake note also that when $b$ is constant the microscopic kinetic energy and the adiabatic invariant essentially coincide and their conservation is equivalent to the irrotationality of $\bE$.
\end{remark}

\subsection{Formal asymptotic limit of the Vlasov-Poisson system}

We come back to the Vlasov-Poisson system \eqref{eq:vlasov2d}. Here one cannot anymore remain completely at the characteristic level \eqref{traj:00}. Moreover whereas arguments of the previous subsection could be turned into sound analytic arguments, to the best of our knowledge the present situation does not fall directly into range of the actually available analysis of gyro-kinetic limits. We refer the reader to \cite{FS:00,GSR:99,SR:02,Cheverry,Miot,HerdaR} for a representative sample of such analytic techniques.

Nevertheless the previous subsection strongly suggests for $(f^\eps,\bE^\eps)$ solving the
Vlasov-Poisson system \eqref{eq:vlasov2d} that in the limit $\eps\rightarrow 0$, the electric field $\bE^\eps$ and the following velocity-averaged version of $f^\eps$ 
$$
f^\eps\,:\,(t,\bx,e)\mapsto\frac{1}{2\pi}\int_0^{2\pi} f^\eps(t,\bx, \sqrt{2e}(\cos(\theta),\sin(\theta)))\,\dd \theta
$$
converge to some $\bE$ and some\footnote{We use distinct notation of variables for limiting functions to be consistent with asymptotic analysis at the characteristic level. This is of course completely immaterial.} $f:(t,\by,g)\mapsto f(t,\by,g)$ solving the following system consisting in a transport equation supplemented with a Poisson equation, 
\begin{equation}
\left\{
\begin{array}{l}
 \ds \frac{\d f}{\d t}+\mathbf{U}\cdot\nabla_{\by} f+ u_g \frac{\d f}{\d g}=0,
\\ \, \\
\ds -\Delta_{\by}\phi=\rho\,,\quad
\rho=2\pi\,\int_{\RR^+} f \,\dd g,
\end{array}
\right.
   \label{eq:gc}
  \end{equation}
where the velocity field is given by
$$
{\bf U}(t,\by,g)\,=\,\bF(t,\by)  \,+\, g \frac{\nabla_\bx^\perp b}{b^2}(t,\by)\,, \qquad
u_g = -\Div_\by(\bF)(t,\by) \,g\,,
$$  
with $\bE = -\nabla_\by \phi$, $\bF=-\bE^\perp/b$. 

Our goal is not to develop a thorough analysis of system~\eqref{eq:gc} but let us mention that it generates a reasonable dynamics whose study falls into the scope of classical techniques.
\begin{theorem}\label{th:existence}
Consider $b\in W^{1,\infty}(\RR^+\times\RR^2)$ satisfying \eqref{hyp:1}.\\ 
Assume $f^0 \in L^1\cap L^\infty (\RR^2\times
\RR^+)$, $f^0$ is nonnegative and  
$$
\int_{\RR^2\times \RR^+}  g\,f^0(\by,g)\,\,\thdd\by\,\thdd g < \infty.
$$
Then system~\eqref{eq:gc} possesses a nonnegative weak solution $f$ starting from $f^0$ at time $0$ such that both Lebesgue norms of $f$ and total energy are non-increasing in time, in particular, 
$$
\|f(t,\cdot,\cdot) \|_{L^p} \,\leq\, \| f^0\|_{L^p}\, \quad \forall \,t\in \RR^+
$$
and
$$
\mathcal{E}(t) \,:=\, 2\pi\,\int_{\RR^2\times\RR^+} g\,f(t,\by, g)\,\thdd\by\,\thdd g
\,+\, \frac{1}{2}\int_{\RR^2} \|\bE(t,\by)\|^2 \thdd\by \,\leq \, \mathcal{E}(0)
\, \quad \forall \,t\in \RR^+.   
$$
Furthermore, when $f^0\in\mathcal{C}^1_c(\RR^2\times \RR^+)$, the solution is unique and preserved along the characteristic curves
$$
f(t,\bY(t),g(t)) \,=\, f(t^0, \bx^0, e^0) \quad \forall \,(t,t^0,\bx^0,e^0)\in \RR^+\times\RR^+\times\RR^2\times\RR^+
$$
where  $(\bY,g)$ solves \eqref{traj:limit}.
\end{theorem}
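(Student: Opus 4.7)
My plan begins with the observation that the transport vector field $(\bU,u_g)$ on $\RR^2\times\RR^+$ is divergence-free: since $\bF$ does not depend on $g$, one has $\d_g u_g = -\Div_\by(\bF)$, while $\Div_\by(\bU)=\Div_\by(\bF)+g\,\Div_\by(\nabla_\by^\perp b/b^2)$ with the last term vanishing because $\nabla^\perp b/b^2$ is, up to sign, the symplectic gradient of $1/b$ and curl of a gradient is zero. Writing the equation in conservative form then yields formal preservation of every Lebesgue norm of $f$. For the energy, I would integrate the transport equation in $g$ to obtain the continuity equation $\d_t\rho+\Div_\by(\bF\rho+J\nabla^\perp b/b^2)=0$ with $J:=2\pi\int gf\,\dd g$, pair it with $\phi$ and use elliptic symmetry to compute $\frac12\frac{\dd}{\dd t}\int|\bE|^2$. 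The term $\int \nabla\phi\cdot\bF\rho$ vanishes because $\bF$ is proportional to $\nabla^\perp\phi$, leaving only a contribution involving $\int J\,\bE\cdot\nabla^\perp b/b^2$. Multiplying the transport equation against $g$ and integrating by parts in both variables produces the kinetic identity $\frac{\dd}{\dd t}(2\pi\int gf)=-\int J\,\Div_\by(\bF)=\int J\,\bE\cdot\nabla^\perp b/b^2$, where the last equality uses that $\Div_\by(\bE^\perp)=0$. The two contributions cancel, so $\mathcal{E}$ is conserved for smooth solutions.

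\textbf{Classical solutions for smooth data.} For $f^0\in\mathcal{C}^1_c$ I would construct a classical solution by a Banach fixed-point argument on $\bE$ in $C([0,T];\mathcal{C}^1_b(\RR^2))$ for $T$ small: given a candidate $\tilde\bE$, the vector field $(\bU,u_g)$ is $\mathcal{C}^1$ in $(\by,g)$, so the characteristics \eqref{traj:limit} are uniquely defined and $f$ is recovered by composition with the inverse flow; then $\bE$ is updated by elliptic regularity from $\rho=2\pi\int f\,\dd g$. Compact support propagates: in $\by$ by finite speed of propagation, and in $g$ by the linear ODE $\dot g=-\Div_\by(\bF)g$, yielding $g(t)\leq g(0)\exp(\|\Div_\by\bF\|_\infty t)$. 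The a priori bounds of Paragraph~1 prevent blow-up and allow global continuation. Uniqueness follows from a Gr\"onwall estimate on the difference of two such $\mathcal{C}^1_c$ solutions, and the conservation property $f(t,\bY(t),g(t))=f(t^0,\bx^0,e^0)$ is then automatic from the method of characteristics.

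\textbf{Weak solutions for rough data.} For general $f^0\in L^1\cap L^\infty$ with finite first $g$-moment, I would approximate by $f^0_n\in\mathcal{C}^1_c$ with matching uniform bounds and pass to the limit in the associated sequence $(f_n,\bE_n)$ of classical solutions. Uniform conservation of Lebesgue norms and energy provides weak-$*$ compactness, and lower semicontinuity yields the claimed inequalities for $\|f(t)\|_{L^p}$ and $\mathcal{E}(t)$. The technical core is strong compactness of $\bE_n$ in a topology strong enough to pass to the limit in the nonlinearity $\bF_n f_n$. The $g$-moment enters via the pointwise interpolation $\rho_n\leq C\|f_n\|_\infty^{1/2}(\int gf_n\,\dd g)^{1/2}$, giving $\rho_n$ bounded in $L^\infty_t L^2_\by$; combined with the continuity equation for $\rho_n$ and elliptic regularization of the Poisson equation, an Aubin-Lions argument supplies the required strong convergence of $\bE_n$.

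The hard part is precisely this last compactness step: a single $g$-moment yields only $L^2_\by$ control on $\rho_n$, which is borderline for upgrading weak to strong convergence of $\bE_n$. One may need to propagate higher $g$-moments along the flow in the spirit of Lions-Perthame, or to exploit the div-free structure of $\bU$ to recover additional integrability of $\rho_n$ before closing the nonlinear limit.
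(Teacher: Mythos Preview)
Your proposal is correct and is precisely the route the paper has in mind. In fact the paper does not give a proof at all: immediately after the statement it simply says that the result ``is not expected to be optimal in any reasonable way and its proof is completely analogous to those for the Vlasov-Poisson system'' and cites Arsenev and DiPerna--Lions for weak solutions and Ukai--Okabe for smooth ones, singling out only the divergence-free identity $\Div_\by\bU+\d_g u_g=\Div_\by(\bF)-\Div_\by(\bF)=0$ as the key structural observation. Your sketch fleshes out exactly this programme --- divergence-free transport giving $L^p$ conservation, the energy cancellation, a characteristics/fixed-point construction for $\mathcal{C}^1_c$ data, and an Arsenev-type approximation/compactness argument for weak solutions --- so there is nothing to compare: you have written the proof the authors omitted.

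One minor remark on your final caveat: the interpolation $\rho\lesssim\|f\|_\infty^{1/2}\big(\int g f\,\dd g\big)^{1/2}$ actually gives $\rho\in L^\infty_t L^2_\by$ directly (since $\int g f\,\dd g\in L^1_\by$), and in two space dimensions this is already enough, via the continuity equation and elliptic regularity, to run the Aubin--Lions step and pass to the limit in $\bF f$; you do not need to propagate higher $g$-moments here.
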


The foregoing result is not expected to be optimal in any reasonable way and its proof is completely analogous to those for the Vlasov-Poisson system (\cite{Arsenev,DiPernaLions} for weak solutions, \cite{Okabe} for smooth solutions). One key observation that underpins the analysis is that the vector field $(\bU,u_g)$ is divergence-free since
$$
\Div_\by \bU  \,+\, \frac{\d u_g}{\d g}  \,=\,\Div_\by(\bF)\,-\,\Div_\by(\bF) = 0.
$$
Note also that for smooth solutions both Lebesgue norms and total energy are exactly preserved by the time evolution. \bigskip

We now come to our main concern in the present article and seek after a numerical method that is able to capture these expected asymptotic properties, even when numerical discretization parameters are kept independent of $\eps$ hence are not adapted to the stiffness degree of the fast scales. Our objective enters in the general framework of so-called Asymptotic Preserving (AP) schemes, first introduced and widely studied for dissipative systems as in \cite{jin:99, klar:98}. Yet, in opposition with collisional kinetic equations in hydrodynamic or
diffusion asymptotics, collisionless equations like the Vlasov-Poisson
system \eqref{eq:vlasov2d} involve fast time oscillations instead of fast time relaxation. By many respects this makes the identification of suitable schemes much more challenging. 

In this context, among the few families of strategies available, most of them insist in providing also a good description of fast oscillations. To describe fast oscillations without imposing severe restrictions on time steps one needs in some sense to double time variables thus to see $f$ as the trace at $(t,\tau)=(t,t/\eps^2)$ of a function of variables $(t,\tau,\bx,\bv)$. The corresponding strategies have proved to be very successful when magnetic fields are uniform.

One of the oldest of these strategies, developed by E. Fr\'enod, F. Salvarani and E. Sonnendr\"ucker in \cite{FSS:09}, is directly inspired by theoretical results on two-scale convergence and relies on the fact that at the limit $\eps\to0$ the $\tau$-dependence may be explicitly filtered out. Its main drawback is probably that it computes only the leading order term in the limit $\eps\to0$. In particular it is only available when $\eps$ is very small. 

This may be fixed by keeping besides the stiff term to which a two-scale treatment is applied a non-stiff part that is smaller in the limit $\eps\to0$ but becomes important when $\eps$ is not small. Such a decomposition may be obtained by using a micro-macro
approach as in \cite{CFHM:15} and some references therein. This does allow to switch from one regime to another without any treatment of the transition between those but results in relatively heavy schemes,. 

Another approach with similar advantages, developed in \cite{bibCLM} and \cite{FHLS:15}, consists in explicitly doubling time variables and seeking higher-dimensional partial differential equations and boundary conditions in variables $(t,\tau,\bx,\bv)$ that contains the original system at the $\eps$-diagonal $(t,\tau)=(t,t/\eps)$. While the corresponding methods are extremely good at capturing oscillations their design require a deep \emph{a priori} understanding of the detailed structure of oscillations. 

Since there are dramatic changes in the structure of oscillations when magnetic fields are not uniform, it seems that the extension of any method capturing oscillations to realistic magnetic fields is either doomed to fail or at least to require a tremendous amount of work and complexity. Observe in particular that in inhomogeneous cases whereas at the limit $\eps\to0$ the slow evolution still obeys a closed system independent of fast scales (as \eqref{eq:gc} above), fast oscillations \emph{do not} uncouple anymore since their oscillating frequencies depend now on the slow scales... In any case this extension would constitute a major break-through in the field.\smallskip

Our goal is somehow more modest as we only require that, in the limit $\eps\to0$, our schemes capture accurately the non stiff part of the evolution while allowing for coarse discretization parameters. Recently, we have indeed proposed a class of semi-implicit schemes which allow to capture the asymptotic limit of the two dimensional Vlasov-Poisson system with a \emph{uniform} magnetic field \cite{FR16}. We stress that by many respects those schemes are remarkably natural and simple.

Here, we show how our approach may be extended to some non uniform cases hence allowing to obtain direct simulations of system \eqref{eq:vlasov2d} with time steps large with respect to $\eps$, while still capturing the slow coherent dynamics that emerge from strong oscillations. We develop numerical schemes that are able to deal with a wide range of values for $\varepsilon$ --- that is, AP schemes in the terminology mentioned above. Our schemes are consistent with the kinetic model for any positive value of $\eps$, and degenerate into schemes consistent with the asymptotic model~\eqref{eq:gc} when $\eps\rightarrow0$.

We stress that the extension carried out here is by no means a trivial continuation of \cite{FR16}. To understand inherent difficulties, we now review what are the main ingredients of the asymptotic analysis discussed in Section~\ref{s:external-asymptotics}, those that we aim at incorporating at the discrete level. As a preliminary warning, it must be stressed however that once the time variable has been discretized the distinction between strong and weak convergences essentially disappear. This is immaterial to the treatment of homogeneous cases since then, as implicitly contained in Remark~\ref{rk:homogeneous}, there is only one key-ingredient, the weak convergence of $\bV^\eps/\eps$ to $\bF(t,\bY)$, that may be safely replaced with a strong convergence at the discrete level. Here, beyond that requirement we also need three pieces of information essentially contained in Lemma~\ref{lmm:01} : weak convergences of $v_1^\eps\,v_2^\eps$ and $|v_1^\eps|^2-|v_2^\eps|^2$ to zero and strong convergence of $\frac12(|v_1^\eps|^2+|v_2^\eps|^2)$ to $g$ (solving \eqref{gc:2}). Therefore we need to offer compatible counterparts \emph{at the discrete level} of the weak convergence of $V^\eps$ to zero and the strong convergence of $\|V^\eps\|^2$ to a non trivial limit, a non trivial task !

\section{A particle method for inhomogeneous strongly magnetized
plasmas}
\setcounter{equation}{0}
\label{sec:3}

The schemes we shall propose here belong to the family of Particle-In-Cell (PIC) methods. Before presenting our specific schemes, we review in a few words the basic ideas underpinning these methods. We refer the reader to \cite{birdsall} for a thorough discussion and other applications to plasma physics. 

To keep notation as light as possible, we temporarily omit to denote the dependence of solutions on $\eps$ and discretization parameters. The starting point is the choice of approximating $f$, solving \eqref{eq:vlasov2d}, by a finite sum of smoothed Dirac masses. More explicitly one would like to compute
$$
f_N(t,\bx,\bv) \,:=\; \sum_{1\leq k\leq N } \omega_k \;\vp_\alpha (\bx-\bX_k(t)) \;\vp_\alpha (\bv-\bV_k(t))\,,
$$
where $\vp_\alpha = \alpha^{-d} \vp(\cdot / \alpha)$ is a particle shape function with radius proportional to $\alpha$ --- usually seen as a smooth approximation of the Dirac measure $\delta_0$ --- obtained by scaling a fixed compactly supported mollifier $\vp$, and the set $((\bX_k,\bV_k))_{1\leq k\leq N}$ represents the position in phase space
of $N$ macro-particles evolving along characteristic curves \eqref{traj:00} from initial data $(\bx_k^0, \bv_k^0)$, $1\leq k \leq N$. More explicitly 
\begin{equation}
\label{traj:bis}
\left\{
\begin{array}{l}
\ds\eps\frac{\dd\bX_k}{\dd t} \,=\, \bV_k, 
\\
\,
\\
\ds\eps\frac{\dd\bV_k}{\dd t} \,=\, -\frac{1}{\eps}\,b(t,\bX_k)\,
\bV_k^\perp  \,+\, \bE(t,\bX_k), 
\\
\,
\\
\bX_k(0) = \bx^0_k, \quad\bV_k(0) = \bv^0_k,
\end{array}\right.
\end{equation}
where the electric field $\bE$ solves the Poisson equation \eqref{eq:vlasov2d}, or, more exactly in the end, it is computed from a solution of a discretization of the Poisson equation on a mesh of the physical space. Parameters are initially chosen to ensure that $f_N(0,\bx,\bv)$ is a good approximation of (continuous) initial datum $f^0$. To increase the order of approximation of Dirac masses, mollifiers are sometimes chosen with a prescribed number of vanishing moments. Concrete common choices include B-splines. See for instance \cite{Koumoutsakos.1997.jcp,Cottet.Koumoutsakos.2000.cup}.

The main remaining issue is then to design an accurate and stable approximation of the particle trajectories in phase space. Note moreover that we ultimately want those properties to be uniform with respect to $\eps$ at least for the slow part of the evolution. As already mentioned, to achieve this goal, we need to reproduce at the discrete level both the (weak) convergence of each $\bV_k$ to zero and a non trivial slow dynamics of its microscopic energy  $e_k=\tfrac{1}{2}\|\bV_k\|^2$. We resolve this difficulty by augmenting the dimension of the phase space by one, looking for unknowns $(\bX_k,\bw_k,e_k)$ starting from $(\bx^0_k,\bv^0_k,\tfrac12\|\bv_k^0\|^2)$ and providing a solution to \eqref{traj:bis} by setting $(\bX_k,\bV_k)=(\bX_k,\sqrt{2e_k}\ \bw_k/\|\bw_k\|)$. The simplest possible choice of such systems would be
\begin{equation}
\left\{
\begin{array}{l}
\ds\eps\frac{\dd\bX_k}{\dd t} \,=\, \bw_k, 
\\
\,
\\
\ds\eps\frac{\dd e_k}{\dd t} \,=\, \bE(t,\bX_k)\cdot\bw_k,
\\
\,
\\
\ds\eps\frac{\dd\bw_k}{\dd t} \,=\, -\frac{1}{\eps}\,b(t,\bX_k)\,
\bw_k^\perp  \,+\, \bE(t,\bX_k), 
\\
\,
\\
\bX_k(0) = \bx^0_k, \quad e_k(0)\,=\, e_k^0,\quad \bw_k(0) = \bw^0_k.
\end{array}\right.
\label{traj:ter}
\end{equation}
with $(\bw_k^0,e_k^0)$ starting from $(\bv^0_k,\tfrac12\|\bv_k^0\|^2)$. However this choice is far from being uniquely determined. For instance one may add to the vector field defining System~\eqref{traj:ter} any function of $(t,\bX,e,\bw)$ that vanishes when $e=\tfrac12\|\bw\|^2$. In the following we shall rather work with
\begin{equation}
\left\{
\begin{array}{l}
\ds\eps\frac{\dd\bX_k}{\dd t} \,=\, \bw_k, 
\\
\,
\\
\ds\eps\frac{\dd e_k}{\dd t} \,=\, \bE(t,\bX_k)\cdot\bw_k,
\\
\,
\\
\ds\eps\frac{\dd\bw_k}{\dd t} \,=\, -\frac{1}{\eps}\,b(t,\bX_k)\,
\bw_k^\perp  \,+\, \bE(t,\bX_k)\,-\,\chi(e_k,\bw_k)\,\nabla_\bx (\ln(b))(t,\bX_k), 
\\
\,
\\
\bX_k(0) = \bx^0_k, \quad e_k(0)\,=\, e_k^0,\quad \bw_k(0) = \bw^0_k
\end{array}\right.
\label{traj:qua}
\end{equation}
with $\chi$ chosen such that
\begin{equation}
\label{hyp:2}
\left(\chi\left(\tfrac{1}{2}\|\bw\|^2\,,\, \bw\right) =0,\,\, \forall \bw\in\RR^2\right)\quad{\rm and}
\quad \left(\lim_{\bw\rightarrow 0} \chi(e,\bw) = e, \,\, \forall e\in\RR\right)
\end{equation} 
and
\begin{equation}
\label{hyp:2bis}
0 \leq \chi(e,\bw) \leq e, \quad \forall(e,\bw)\in\RR_+\times\RR^2\,.
\end{equation}
For concreteness, in the following, we actually choose $\chi$ as
$$
\chi(e,\bw) \,=\, \frac{e}{e \,+\, \|\bw\|^2/2 }\,\left( e \,-\, \frac{\|\bw\|^2}{2} \right)^+\,,\quad \forall(e,\bw)\in\RR\times\RR^2
$$ 
where $s^+=\max(0,s)$. The choice of System~\ref{traj:qua} originates in the fact that it is relatively easy to discretize it so as to ensure that, in the limit $\eps\to0$, $\eps^{-1}\bw_k$ becomes asymptotically close to
$$
\bF(t,\bX_k)\,-\,e_k\nabla_\bx^\perp \left(\frac1b\right)(t,\bX_k)
$$
(as it occurs for $\eps^{-1}\bV_k$, see Section~\ref{s:external-asymptotics}). Since we start with $e_k^0=\tfrac12\|\bw_k^0\|^2$ and the constraint is preserved by the evolution, the choice of either System~\eqref{traj:ter} or of \eqref{traj:qua} or of any similar system is perfectly immaterial as long as we do not discretize the time variable. However their discretizations will no longer preserve exactly the condition $e_k^0=\tfrac12\|\bw_k^0\|^2$ thus they will lead to effectively distinct solutions.

Note that the addition of a new phase space variable is much less invasive than doubling time variables (but is insufficient to allow for the description of fast oscillations with coarse grids) and it is also very cheap in the context of PIC methods. In particular, consistently with the former claim, now that once we have chosen an augmented system such as~\eqref{traj:qua} we may discretize it by adapting almost readily the strategy developed in \cite{BFR:15, FR16} and based on semi-implicit solvers for stiff problems. 

In the rest of this section, we shall focus on the discretization of System~\eqref{traj:qua} and propose several numerical schemes for that purpose. Yet, prior to that, we briefly summarize the above discussion and explain how this discretization will be used to compute solutions to \eqref{eq:vlasov2d}. Besides already introduced parameters, fix a given time step $\Delta t>0$ and for $n\geq0$ introduce discrete time $t^n=n\,\Delta t$. Then we chose a discretization of System~\eqref{traj:qua} expected to provide for any $k\in\{1,\ldots,N\}$, $(\bx^n_k,,e^n_k,\bw^n_k)$ an approximation of $(\bX_k(t^n),e_k(t^n),\bw_k(t^n))$, where $(\bX_k,e_k,\bw_k)$ solves \eqref{traj:qua}. Of course equations are not uncoupled thus we compute simultaneously all $(\bx^n_k,,e^n_k,\bw^n_k)$, $k\in\{1,\ldots,N\}$, and the corresponding electric field computed from a solution on a mesh of the physical space of a discretization of the Poisson equation  with macroscopic density corresponding to the distribution function $(f_{N,\alpha}^n)_{n\geq0}$, given at time $t^{n}$ by
$$
f_{N,\alpha}^{n}(\bx,\bv) \,:=\; \sum_{1\leq k \leq N} \omega_k
\;\vp_\alpha (\bx-\bx^n_k) \;\vp_\alpha (\bv-\bv^n_k)\,,
$$
where
$$
\bv^n_k \,\;:=\,\, \sqrt{2 \, e_k^n} \,\, \frac{\bw^n_k}{\|\bw_k^n\|}\,. 
$$

\subsection{A first-order semi-implicit scheme}
Since we only discuss possible discretizations of \eqref{traj:qua}, we shall omit from now on the index $k\in\{1,\ldots,N\}$ and consider the electric field as fixed, assuming smoothness of $(\bE,b)$ and  
\eqref{hyp:1}.

We start with the simplest semi-implicit scheme for \eqref{traj:qua}, which is a combination of the backward and forward Euler schemes. For a fixed time step $\Delta t>0$ it is given by
\begin{equation}
\label{scheme:0}
\left\{
\begin{array}{l}
\ds\frac{\bx^{n+1} - \bx^n }{\Delta t} \,\,=\, \frac{\bw^{n+1}}{\eps}\,,
\\
\,
\\
\ds\frac{e^{n+1} - e^n }{\Delta t} \,\,=\, \bE(t^n,\bx^n)\cdot \frac{\bw^{n+1}}{\eps}\,,
\\
\,
\\
\ds\frac{\bw^{n+1} - \bw^n }{\Delta t} \,=\, \frac{1}{\eps}\left(
{\bE(t^n,\bx^n)} \,-\, \chi(e^n,\bw^n)\,\nabla_\bx(\ln(b))(t^n,\bx^n) - b(t^n,\bx^n)\frac{(\bw^{n+1})^\perp }{\eps} \right).
\end{array}\right.
\end{equation}
Notice that only the third equation on $\bw^{n+1}$ is really implicit and it only requires the resolution of a two-dimensional linear system. Then, once the value of $\bw^{n+1}$ has been computed the first and second equations provide explicitly the values of $\bx^{n+1}$ and $e^{n+1}$.

\begin{proposition}[Consistency in the limit $\eps\rightarrow 0$ for a fixed $\Delta t$]
\label{prop:1}
Let us consider a time step $\Delta t>0$, a final time $T>0$  and
set $N_T=\lfloor T/\Delta t\rfloor$. Assume that $(\bx^n_\eps,\bw_\eps^n,e^n_\eps)_{0\leq
  n\leq N_T}$ is a sequence obtained by \eqref{scheme:0} and such that
\begin{itemize}
\item for all $1\leq n\leq N_T$, $\left(\bx^n_\eps,\eps \bw^n_\eps, e^n_\eps\right)_{\eps>0}$ is uniformly bounded with respect to $\eps>0$;
\item $\left(\bx^0_\eps,\bw_\eps^0,e^0_\eps\right)_{\eps>0}$ converges in the limit $\eps\rightarrow 0$ to some $(\by^0,\bw^0,g^0)$. 
\end{itemize}
Then, for any $1\leq n\leq N_T$, $(\bx^n_\eps,e^n_\eps)_{\eps>0}$ converges to some $(\by^n,g^n)$ as $\eps\rightarrow 0$ and the limiting sequence $(\by^n,g^n)_{1\leq n\leq N_T}$ solves
\begin{equation}
\label{sch:y0}
\left\{
\begin{array}{l}
\ds\frac{\by^{n+1} - \by^n}{\Delta t} \,=\, -\frac{1}{b(t^n,\by^n)} \Big(
  \bE(t^n,\by^n) - g^n \,\nabla_\by (\ln(b))(t^n,\by^n)\Big)^\perp
\\
\,
\\
\ds\frac{g^{n+1} - g^n}{\Delta t} = g^n \,\frac{\nabla_\by^\perp b}{b^2}(t^n,\by^n) \cdot \bE(t^n,\by^n)\,
\end{array}\right.
\end{equation}
which provides a consistent first-order approximation with respect to $\Delta t$ of the
gyro-kinetic system~\eqref{traj:limit}.
\end{proposition}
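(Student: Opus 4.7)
The plan is to argue by induction on $n$, the central step being an explicit inversion of the only implicit relation in \eqref{scheme:0}, its third line. Rewriting that line as $(I + \alpha_n J)\,\bw^{n+1} = \bw^n + (\Delta t/\eps)\,\bG^n$ with $\alpha_n := \Delta t\, b(t^n,\bx^n)/\eps^2$, $J\bu := \bu^\perp$, and $\bG^n := \bE(t^n,\bx^n) - \chi(e^n,\bw^n)\,\nabla_\bx(\ln b)(t^n,\bx^n)$, I would apply $\perp$ directly (equivalently, use $J^2 = -\Id$) to derive the equivalent explicit formula
\[
\frac{\bw^{n+1}}{\eps} \,=\, -\frac{(\bG^n)^\perp}{b(t^n,\bx^n)} \,+\, \frac{(\eps\bw^{n+1} - \eps\bw^n)^\perp}{\Delta t\, b(t^n,\bx^n)}.
\]
Under $b \geq b_0 > 0$ and the hypothesis that $\eps\bw^n_\eps$ is bounded uniformly in $\eps$, the correction above is uniformly bounded, hence so is $\bw^{n+1}_\eps/\eps$, and therefore $\bw^{n+1}_\eps = O(\eps) \to 0$ strongly.

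An induction on $n \geq 1$ then propagates the convergences $(\bx^n_\eps, e^n_\eps, \bw^n_\eps) \to (\by^n, g^n, 0)$. The base case follows from $\bw^0_\eps \to \bw^0$ (so $\eps\bw^0_\eps \to 0$) combined with the just-derived $\bw^1_\eps = O(\eps)$; in the inductive step $n \geq 1$, both $\eps\bw^n_\eps$ and $\eps\bw^{n+1}_\eps$ are $O(\eps^2) \to 0$, so the correction term in the formula vanishes in the limit, and $\chi(e^n_\eps, \bw^n_\eps) \to \chi(g^n, 0) = g^n$ by the continuity of $\chi$ and \eqref{hyp:2}. Hence
\[
\frac{\bw^{n+1}_\eps}{\eps} \,\longrightarrow\, -\frac{1}{b(t^n,\by^n)}\bigl(\bE(t^n,\by^n) - g^n\,\nabla_\by(\ln b)(t^n,\by^n)\bigr)^\perp.
\]
Substituting this limit into the first two lines of \eqref{scheme:0} and using $\bE \cdot \bE^\perp = 0$ to clean up the $e$-update produces exactly \eqref{sch:y0}.

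To conclude, I would verify that \eqref{sch:y0} is first-order consistent with \eqref{traj:limit}. The identities $(\nabla_\by\ln b)^\perp = \nabla_\by^\perp b / b$ and $\bF = -\bE^\perp/b$ rewrite the first line of \eqref{sch:y0} as the forward-Euler step for $\dot\by = \bF + g\,\nabla_\by^\perp b/b^2$, matching the first line of \eqref{traj:limit}. For the $g$-equation, a direct computation gives $\Div_\bx\bF = \Rot(\bE)/b - \bE \cdot \nabla_\bx^\perp b/b^2$; in the Vlasov-Poisson setting $\bE = -\nabla_\bx\phi$ so that $\Rot(\bE) = 0$, and the second line of \eqref{sch:y0} becomes the forward-Euler step for $\dot g = -\Div_\bx\bF\,g$. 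First-order accuracy then follows by Taylor expansion.

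The main difficulty lies in extracting the correct leading $\eps \to 0$ behaviour of $\bw^{n+1}/\eps$: the implicit rotation contributes at order $\eps^{-2}$, the forcing at order $\eps^{-1}$, and only the bootstrap $\bw^n_\eps = O(\eps)$ --- itself read off from the explicit formula using $\eps\bw^n_\eps = O(1)$ --- keeps the residual small enough to close the induction. The remaining steps (propagating limits inductively, and the Taylor-level consistency check) are then routine.
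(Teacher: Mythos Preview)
Your proof is correct and follows essentially the same approach as the paper: both hinge on rewriting the third equation of \eqref{scheme:0} as your displayed formula for $\bw^{n+1}/\eps$ (the paper's \eqref{flav:0}, up to applying $\perp$), using it first to bootstrap $\bw^{n+1}_\eps/\eps = O(1)$ from the assumed bound on $\eps\bw^n_\eps$, and then passing to the limit with $\chi(g^n,0)=g^n$. The only organizational difference is that the paper extracts a convergent subsequence of $(\bx^n_\eps,e^n_\eps)$ and then argues uniqueness of the limit to recover full convergence, whereas you proceed by direct induction on $n$; your route is slightly more streamlined but otherwise equivalent. Note, as the paper remarks after the proof, that the step $n=0$ produces $(\by^1,g^1)$ via $\chi(g^0,\bw^0)$ rather than via \eqref{sch:y0}, so the limiting scheme only starts at $n=1$ --- consistent with the statement but worth making explicit in your base case.
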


Note that we actually apply this result with $\left(\bx^0_\eps,\bw_\eps^0,e^0_\eps\right)_{\eps>0}=(\bx^0,\bv^0,\tfrac12\|\bv^0\|^2)$ so that in this case $(\by^0,\bw^0,g^0)=(\bx^0,\bv^0,\tfrac12\|\bv^0\|^2)$. Furthermore, in this case, the limiting scheme \eqref{sch:y0} starts from initial data
$$
\by^1\,=\,\by^0\,+\,(\Delta t)\,\bF(0,\by^0)\,,\qquad\qquad
g^1\,=\,g^0\,.
$$
More generally, under the assumptions of Proposition~\ref{prop:1}, $(\by^1,g^1)$ differs from  $(\by^0,g^0)$ by a first-order term but in general does not satisfy \eqref{sch:y0} with $n=0$. The scheme needs one time-step to reach the expected asymptotic regime in the limit $\eps\to0$. 

\begin{proof}
First, note that, for any $0\leq n\leq N_T$, since $(\bx^n_\eps,e^n_\eps)_{\eps>0}$ is bounded, $\bE(t^n,\bx^n_\eps)$, $\nabla_\bx b(t^n,\bx^n_\eps)$ and $\chi(e^n_\eps,\bw_\eps^n)$ are also bounded uniformly in $\eps$. Therefore, combined with assumption \eqref{hyp:1} and boundedness of $\left(\eps \bw^n_\eps\right)_{\eps>0}$, the third equation of \eqref{scheme:0}
written as
\begin{equation}
\label{flav:0}
\frac{(\bw^{n+1}_\eps)^\perp}{\eps}\,\,=\,\frac{1}{b(t^n,\bx_\eps^n)}\left(-\frac{\eps\bw^{n+1}_\eps - \eps\bw^n_\eps }{\Delta t}\,+\, 
\bE(t^n,\bx^n_\eps) \,-\, \chi(e^n_\eps,\bw_\eps^n)\,\nabla_\bx \ln(b(t^n,\bx^n))\right)
\end{equation}
shows that, for any $1\leq n\leq N_T$, $\left(\eps^{-1}\bw^n_\eps\right)_{\eps>0}$ is bounded and in particular $\left(\bw^n_\eps\right)_{\eps>0}$ converges to zero. 
Moreover, for any $1\leq n\leq N_T$, since the sequence $(\bx^n_\eps, e^n_\eps)_{\eps>0}$ is uniformly bounded with respect to
$\eps>0$, we can extract a subsequence still abusively labeled by $\eps$ and find some 
$(\by^n, g^n)$ such that $(\bx^n_\eps, e^n_\eps) \rightarrow (\by^n,g^n)$ as $\eps$ goes to zero.

Therefore, by using continuity of $(E,b,\nabla_\bx b,\chi)$, we first conclude that for any $1\leq n\leq N_T-1$, \eqref{flav:0} yields
$$
\eps^{-1}\,\bw^{n+1}_\eps \,\rightarrow\,
-\frac{1}{b(t^n,\by^n)}\Big( \,\bE(t^n,\by^n) \,-\, g^n\,\nabla_\by
  (\ln(b))(t^n,\by^n) \,\Big)^\perp, \,\,{\rm when}\,\, \eps\rightarrow 0
$$
since $\chi(g^n,0)=g^n$. By substituting the foregoing limit in the first and second  equations of \eqref{scheme:0} we obtain that the limit $(\by^n,g^n)_{1\leq n\leq N_T}$ satisfies \eqref{sch:y0}. Moreover the same argument shows that
\begin{equation}\label{sch:y0-init}
\left\{
\begin{array}{l}
\ds\by^1=\,\by^0\,+\,(\Delta t)\,\left(\bF(0,\by^0)
\,+\,\chi(g^0,\bw^0)\,\frac{\nabla_\by^\perp b}{b^2}(0,\by^0)\right)\,,\\
\ds g^1=\,g^0\,+\,(\Delta t)\,\chi(g^0,\bw^0)\,\bE(0,\by^0)\cdot\frac{\nabla_\by^\perp b}{b^2}(0,\by^0)\,.
\end{array}\right.
\end{equation}
Since the limit points $(\by^n,g^n)$ of the extracted subsequence are uniquely determined (by \eqref{sch:y0} and \eqref{sch:y0-init}), actually all the sequence $(\bx^n_\eps,e^n_\eps)_{\eps>0}$ converges (for any $n$).
\end{proof}

\begin{remark}
The consistency provided by the latter result is far from being uniform with respect to the time step. However, though we restrain from doing so here in order to keep technicalities to a bare minimum, we expect that an analysis similar to the one carried out in \cite[Section~4]{FR16} could lead to uniform estimates, proving uniform stability and  consistency with respect to both $\Delta t$ and $\eps$. 
\end{remark}

Of course, a first-order scheme may fail to be accurate enough to describe correctly the long time behavior of the solution, but it has the advantage of simplicity. In the following we show how to generalize our approach to second and third-order schemes. 

\subsection{Second-order semi-implicit Runge-Kutta schemes}
Now, we come to second-order schemes with two stages.  The scheme we consider is a combination of a Runge-Kutta method for the explicit part and of an $L$-stable second-order SDIRK method for the implicit part. 

\begin{remark}
In our original treatment of the homogeneous case in \cite{FR16} where we design schemes to capture \eqref{gc:-1}, the discrete velocity $\bv^n_\eps$ was damped to zero in the limit $\eps\to0$, consistently with weak convergence of $\bV^\eps$ to zero but not with conservation of $\tfrac12\|\bV^\eps\|^2$. Therefore there was some interest there in tailoring schemes not too dissipative. Here at the discrete level the two distinct behaviors are encoded by two distinct variables $\bw^n_\eps$ and $e^n_\eps$ so that we may rightfully focus solely on implicit $L$-stable choices.
\end{remark}

To describe the scheme, we introduce $\gamma>0$ the smallest root of the polynomial $X^2 - 2X + 1/2$, {\it  i.e.} $\gamma = 1 - 1/\sqrt{2}$. Then the scheme is given by the
following two stages. First,
\begin{equation}
\label{scheme:3-1}
\left\{
\begin{array}{l}
\ds \bx^{(1)} \,=\, \bx^n  \,+\, \frac{\gamma\Delta t}{\eps}\,\bw^{(1)}\,,
\\
\,
\\
\ds e^{(1)} \,=\, e^n  \,+\, \frac{\gamma\Delta t}{\eps} \,S^{(1)}\,,
\\
\,
\\
\ds{\bw^{(1)} \,=\, \bw^n  \,+\, \frac{\gamma\Delta t}{\eps}\,\bF^{(1)},}
\end{array}\right.
\end{equation}
with
\begin{equation}
\label{F1}
\left\{
\begin{array}{l}
\ds \bF^{(1)} \,:=\, \bE(t^n,\bx^n) \,-\,
\chi(e^n,\bw^n)\,\nabla_\bx (\ln(b))(t^n,\bx^n) \,-\,b(t^n,\bx^n)\,\frac{(\bw^{(1)})^\perp}{\eps}\,,\\[0.5em]
S^{(1)}\,:=\, \bE(t^n,\bx^n)\cdot \bw^{(1)}\,.
\end{array}\right.
\end{equation}
Before the second stage, we first introduce  $\hat{t}^{(1)} \,=\, t^n + {\Delta t}/{(2\gamma)}$ and explicitly compute $(\hat{\bx}^{(1)},\hat{\bw}^{(1)},\hat{e}^{(1)})$ from
\begin{equation}
\label{tard2}
\left\{
\begin{array}{l}
\ds\hat{\bx}^{(1)} \,=\, \bx^{n}\,+\, \frac{\Delta t}{2\gamma\eps}
\bw^{(1)}
\,=\, \bx^{n}\,+\, \frac{\bx^{(1)}-\bx^{n}}{2\gamma^2},\\ \, \\ 
\ds\hat{e}^{(1)} \,=\, e^{n}\,+\, \frac{\Delta t}{2\gamma\eps}
\,S^{(1)}\,=\, e^{n}\,+\, \frac{e^{(1)}-e^{n}}{2\gamma^2}, 
\\ \,\\
\ds \hat{\bw}^{(1)} \,=\,
\bw^{n}\,+\, \frac{\Delta t}{2\gamma\eps} \bF^{(1)}\,=\,
\bw^{n}\,+\, \frac{\bw^{(1)}-\bw^{n}}{2\gamma^2}\,.
\end{array}\right.
\end{equation}
Then  the solution of the second stage $(\bx^{(2)},\bw^{(2)},e^{(2)})$ is given by
\begin{equation}
\label{scheme:3-2}
\left\{
\begin{array}{l}
\ds{\bx^{(2)} \,=\, \bx^{n}  \,+\, \frac{(1-\gamma)\Delta t}{\eps} \,\bw^{(1)}  \,+\, \frac{\gamma\Delta t}{\eps}\,\bw^{(2)},}
\\
\,
\\
\ds{e^{(2)} \,=\, e^{n}  \,+\, \frac{(1-\gamma)\Delta t}{\eps} \,S^{(1)} \,+\, \frac{\gamma\Delta t}{\eps}\,S^{(2)},}
\\
\,
\\
\ds{\bw^{(2)} \,=\, \bw^{n}  \,+\, \frac{(1-\gamma)\Delta t}{\eps}
  \,\bF^{(1)}  \,+\, \frac{\gamma\Delta t}{\eps}  \,\bF^{(2)}},
 \end{array}\right.
\end{equation}
with 
\begin{equation}
\label{F2}
\left\{
\begin{array}{l}
\ds \bF^{(2)} :=\bE\left(\hat{t}^{(1)}, \hat{\bx}^{(1)}\right) -
\chi\left(\hat{e}^{(1)},\hat{\bw}^{(1)}\right)\nabla_\bx(\ln\left(b\right))\left(\hat{t}^{(1)}, \hat{\bx}^{(1)}\right)
-b\left(\hat{t}^{(1)}, \hat{\bx}^{(1)}\right) \frac{(\bw^{(2)})^\perp}{\eps}\,,
\\
\;
\\
S^{(2)} := \bE(\hat{t}^{(1)},\hat{\bx}^{(1)})\cdot \bw^{(2)}\,.
\end{array}\right.
\end{equation}
Finally, the numerical solution at the next time step is defined by
\begin{equation}
\label{scheme:3-3}
\bx^{n+1} \,=\, \bx^{(2)},\qquad
e^{n+1} \,=\, e^{(2)},\qquad
\bw^{n+1} \,=\, \bw^{(2)}.
\end{equation}

\begin{proposition}[Consistency in the
  limit $\eps\rightarrow 0$ for a fixed $\Delta t$]
\label{prop:3}
Let us consider a time step $\Delta t>0$, a final time $T>0$  and
set $N_T=\lfloor T/\Delta t\rfloor$.
Assume that $(\bx^n_\eps,\bw_\eps^n,e^n_\eps)_{0\leq
  n\leq N_T}$ is a sequence obtained by \eqref{scheme:3-1}-\eqref{scheme:3-3} and such that
\begin{itemize}
\item for all $1\leq n\leq N_T$, $\left(\bx^n_\eps,\eps \bw^n_\eps, e^n_\eps\right)_{\eps>0}$ is uniformly bounded with respect to $\eps>0$;
\item $\left(\bx^0_\eps,\bw_\eps^0,e^0_\eps\right)_{\eps>0}$ converges in the limit $\eps\rightarrow 0$ to some $(\by^0,\bw^0,g^0)$. 
\end{itemize}
Then, for any $1\leq n\leq N_T$, $(\bx^n_\eps,e^n_\eps)_{\eps>0}$ converges to some $(\by^n,g^n)$ as $\eps\rightarrow 0$ and the limiting sequence $(\by^n,g^n)_{0\leq n\leq N_T}$ solves 
\begin{equation}
\label{sch:y2-bis}
\left\{
\begin{array}{l}
\ds \by^{n+1} = \ds \by^{n} + (1-\gamma)\,\Delta t \,\bU^{n} +
  \gamma\,\Delta t\,\,\bU^{(1)}\,,
\\
\,
\\
\ds g^{n+1}  = \ds g^{n} + (1-\gamma)\,\Delta t \, \,T^n \,+\, \gamma\,\Delta t\,T^{(1)} \,,
\end{array}\right.
\end{equation}
where
$$
\left\{
\begin{array}{l}
\ds \bU^{n} = -\frac{1}{b(t^n,\by^n)} \Big( \bE(t^n,\by^n) - g^n \,\nabla_\by (\ln(b))(t^n,\by^n)\Big)^\perp,
\\
\,
\\
\ds T^{n} = g^n \,\frac{\nabla_\by^\perp b}{b^2}(t^n,\by^n) \cdot \bE(t^n,\by^n)
\end{array}\right.
$$
and
$$
\left\{
\begin{array}{l}
\ds \bU^{(1)} = \ds -\frac{1}{b\left(\hat{t}^{(1)},\hat{\by}^{(1)}\right)} \Big(
  \bE\left(\hat{t}^{(1)},\hat{\by}^{(1)}\right) - \hat{g}^{(1)}
  \,\nabla_\by(\ln\left(b\right))\left(\hat{t}^{(1)},\hat{\by}^{(1)}\right)
  \Big)^\perp,
\\
\,
\\
\ds T^{(1)} = \hat{g}^{(1)}\,\frac{\nabla_\by^\perp b}{b^2}
\left(\hat{t}^{(1)},\hat{\by}^{(1)}\right) \cdot \bE\left(\hat{t}^{(1)},\hat{\by}^{(1)}\right)\,,
\end{array}\right.
$$
with $\hat{t}^{(1)}=t^n+\Delta t/(2\gamma)$ and
$$
\left\{
\begin{array}{l}
\ds \hat{\by}^{(1)} = \by^{n} + \frac{\Delta t}{2\gamma} \,\bU^{n}\,,
\\
\,
\\
\ds \hat{g}^{(1)} = g^n \,+\,  \frac{\Delta t}{2\gamma}  \,T^n\,,
\end{array}\right.
$$
which provides a consistent second-order approximation of the gyro-kinetic system \eqref{traj:limit}.
\end{proposition}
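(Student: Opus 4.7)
The plan is to proceed exactly as in the proof of Proposition~\ref{prop:1}, but propagating the argument stage by stage through the two-stage Runge-Kutta structure. I would work by induction on $n$, the induction hypothesis at time $t^n$ being that $(\bx^n_\eps,e^n_\eps)\to(\by^n,g^n)$ and $\bw^n_\eps\to 0$ strongly as $\eps\to 0$, and the goal being to establish the analogous conclusion at time $t^{n+1}$ together with the announced limiting scheme \eqref{sch:y2-bis}.

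For the first stage, I would isolate $\bw^{(1)}/\eps$ from the implicit equation in \eqref{scheme:3-1}--\eqref{F1} by rewriting it under the form
\[
\frac{(\bw^{(1)})^\perp}{\eps}\,=\,\frac{1}{b(t^n,\bx^n_\eps)}\left(-\frac{\eps\bw^{(1)}-\eps\bw^n_\eps}{\gamma\Delta t}\,+\,\bE(t^n,\bx^n_\eps)\,-\,\chi(e^n_\eps,\bw^n_\eps)\,\nabla_\bx(\ln(b))(t^n,\bx^n_\eps)\right),
\]
legitimate thanks to \eqref{hyp:1} and the uniform bound on $\eps\bw^{(j)}$. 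Passing to the limit, using continuity together with $\chi(g^n,0)=g^n$ from \eqref{hyp:2}, yields $\bw^{(1)}/\eps\to\bU^n$; substituting this into the remaining explicit lines of \eqref{scheme:3-1} gives $\bx^{(1)}\to\by^n+\gamma\Delta t\,\bU^n$ and $e^{(1)}\to g^n+\gamma\Delta t\,T^n$, while \eqref{tard2} then immediately produces $\hat\bx^{(1)}\to\hat\by^{(1)}$, $\hat e^{(1)}\to\hat g^{(1)}$, and crucially $\hat\bw^{(1)}\to 0$ (since $\bw^{(1)}\to 0$).

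Applying exactly the same inversion to the implicit equation for $\bw^{(2)}$ in \eqref{scheme:3-2}--\eqref{F2}, now evaluated at the predictor $(\hat\bx^{(1)},\hat e^{(1)},\hat\bw^{(1)})$ whose second velocity slot limits to zero, yields $\bw^{(2)}/\eps\to\bU^{(1)}$. Inserting this limit into the first two lines of \eqref{scheme:3-2}, together with the definition \eqref{scheme:3-3}, produces exactly \eqref{sch:y2-bis} and closes the induction via $\bw^{n+1}_\eps=\bw^{(2)}_\eps\to 0$. Uniqueness of the limit points then promotes the subsequential convergence to convergence of the whole family. Second-order consistency of \eqref{sch:y2-bis} with respect to \eqref{traj:limit} reduces to reading off the Butcher tableau of the resulting explicit two-stage Runge-Kutta scheme, with nodes $(0,1/(2\gamma))$ and weights $(1-\gamma,\gamma)$: the order-one condition $(1-\gamma)+\gamma=1$ and the order-two condition $\gamma\cdot 1/(2\gamma)=1/2$ are both satisfied identically in $\gamma$, so that the specific value $\gamma=1-1/\sqrt 2$ plays no role for consistency (it is dictated solely by the $L$-stability of the underlying SDIRK part).

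The main obstacle is the careful bookkeeping of the $\chi$ terms at each stage: one must verify that in every occurrence of $\chi(\cdot,\bw)$ appearing in the scheme, the velocity argument $\bw$ is evaluated either at $t^n$ or at the intermediate predictor, in both cases going strongly to zero, so that \eqref{hyp:2} lets $\chi$ collapse to the identity in its first argument in the limit. Once this structural point is settled, the remainder is a mechanical stage-by-stage repetition of the single-stage argument of Proposition~\ref{prop:1}; the initial discrete time $n=0$ again requires a minor adjustment analogous to \eqref{sch:y0-init}, reflecting that the scheme reaches its asymptotic regime only after one step when $\bw^0\neq 0$.
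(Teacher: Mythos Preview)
Your proposal is correct and follows essentially the same stage-by-stage route as the paper's proof: rewrite each implicit velocity equation to isolate $(\bw^{(j)})^\perp/\eps$, use boundedness and \eqref{hyp:1} to pass to the limit, exploit $\chi(\cdot,0)=\textrm{id}$ from \eqref{hyp:2}, and conclude by uniqueness of limits. Your explicit verification of the order conditions via the Butcher tableau is a welcome addition that the paper leaves implicit, and your remark that the specific value $\gamma=1-1/\sqrt{2}$ is irrelevant for consistency (being dictated only by $L$-stability) is a nice observation.
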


\begin{proof}
We mainly follow the lines of the proof of Proposition~\ref{prop:1}. To make arguments more precise we mark with a suffix ${}_{n+1,\eps}$ intermediate quantities involved in the step from $t^n$ to $t^{n+1}$.

To begin with, for $1\leq n\leq N_T$, the third equation of \eqref{scheme:3-1} implies that $(\eps^{-1} \bw^{(1)}_{n,\eps})_\eps$ is bounded, then the first and second equations of \eqref{tard2} show that $(\hat{\bx}^{(1)}_{n,\eps},\hat{e}^{(1)}_{n,\eps})_\eps$ is also bounded, from which the third equation of \eqref{scheme:3-1} shows the boundedness of $(\eps^{-1} \bw^{(1)}_{n,\eps})_\eps$. Then from the third equation of \eqref{tard2} we derive that $(\hat{\bw}^{1}_{1,\eps})_\eps$ is bounded and that, for $2\leq n\leq N_T$, $(\eps^{-1}\hat{\bw}^{(2)}_{n,\eps})_\eps$ is bounded.

One may then extract converging subsequences from all bounded families. With obvious notation for limits this leads in particular along the chosen subsequence from the third equations of \eqref{scheme:3-1} and \eqref{scheme:3-2} to
$$
\eps^{-1}\,\bw^{(1)}_{n+1,\eps} \,\stackrel{\eps\to0}{\longrightarrow}\, 
-\frac{1}{b(t^n,\by^n)}\Big( \,\bE(t^n,\by^n) \,-\, g^n\,\nabla_\by
  (\ln(b))(t^n,\by^n) \,\Big)^\perp
$$
and
$$
\eps^{-1}\,\bw^{(2)}_{n+1,\eps}\,\stackrel{\eps\to0}{\longrightarrow}\, -\frac{1}{b(t^{(1)}_{n+1},\hat{\by}^{(1)}_{n+1})}\Big( \,\bE\left(\hat{t}^{(1)}_{n+1},\hat{\by}^{(1)}_{n+1}\right) \,-\, \hat{g}^{(1)}_{n+1}\,\nabla_\by(\ln\left(b\right))(\hat{t}^{(1)}_{n+1},\hat{\by}^{(1)}_{n+1})\,\Big)^\perp 
$$
when $1\leq n\leq N_T-1$, since $\bw^n_\eps\to0$ and $\bw^{(1)}_{n+1,\eps}\to0$. With this in hands taking, along the same subsequence, the limit $\eps\to0$ of first and second equations of \eqref{tard2} and \eqref{scheme:3-2} provides the claimed scheme.

Moreover the same arguments provide explicit expressions of $(\by^1,g^1)$ in terms of $(\by^0,\bw^0,g^0,\Delta t)$. This yields uniqueness of limits independently of the chosen subsequence hence convergence of full families.
\end{proof}

\subsection{Third-order semi-implicit Runge-Kutta schemes}
Now we consider a third-order semi-implicit scheme. It is given by a four stages Runge-Kutta method and was introduced in the framework of hyperbolic systems with stiff source terms in \cite{BFR:15}. 

First, we choose $\alpha=0.24169426078821$, $\eta= 0.12915286960590$ and set $\beta =
\alpha/4$, $\gamma=1/2-\alpha-\beta-\eta$. Then the first stage of the scheme is
\begin{equation}
\label{scheme:4-1}
\left\{
\begin{array}{l}
\ds{\bx^{(1)} \,=\, \bx^n  \,+\, \frac{\alpha\Delta t}{\eps}\,\bw^{(1)},}
\\
\,
\\
\ds{e^{(1)} \,=\, e^n  \,+\, \frac{\alpha\Delta t}{\eps}\, S^{(1)},}
\\
\,
\\
\ds{\bw^{(1)} \,=\, \bw^n  \,+\, \frac{\alpha\Delta t}{\eps} \,\bF^{(1)},}
\end{array}\right.
\end{equation}
with 
\begin{equation}
\label{3F1}
\left\{
\begin{array}{l}
\ds \bF^{(1)} \,:=\, \bE(t^n,\bx^n) \,-\, \chi(e^n,\bw^n)\,\nabla_\bx (\ln(b))(t^n,\bx^n)
\,-\,b(t^n,\bx^n)\,\frac{(\bw^{(1)})^\perp}{\eps}\,,
\\[0.5em]
S^{(1)}\,:=\;\bE(t^n,\bx^n)\cdot\bw^{(1)}\,.
\end{array}\right.
\end{equation}
The second stage is
\begin{equation}
\label{scheme:4-2}
\left\{
\begin{array}{l}
\ds{\bx^{(2)} \,=\, \bx^{n}  \,-\, \frac{\alpha\Delta t}{\eps} \,\bw^{(1)}  \,+\, \frac{\alpha\Delta t}{\eps}\,\bw^{(2)}\,,}
\\
\,
\\
\ds{e^{(2)} \,=\, e^{n}  \,-\, \frac{\alpha\Delta t}{\eps} \,S^{(1)}
  \,+\, \frac{\alpha\Delta t}{\eps}\, S^{(2)}\,,}
\\
\,
\\
\ds{\bw^{(2)} \,=\, \bw^{n}  \,-\, \frac{\alpha\Delta t}{\eps} \,\bF^{(1)} \,+\, \frac{\alpha\Delta t}{\eps}\,\bF^{(2)}\,,}
\end{array}\right.
\end{equation}
with
\begin{equation}
\label{3F2}
\left\{
\begin{array}{l}
\ds \bF^{(2)} \,:=\, \bE(t^n,\bx^n) \,-\,
\chi(e^n,\bw^n)\,\nabla_\bx (\ln(b))(t^n,\bx^n)
\,-\,b(t^n,\bx^n)\,\frac{(\bw^{(2)})^\perp}{\eps}\,,
\\
\,
\\
S^{(2)}\,:=\,\bE(t^n,\bx^n)\cdot\bw^{(2)}.
\end{array}\right.
\end{equation}
Then, for the third stage we first compute explicitly intermediate values
$$
\left\{
\begin{array}{l}
\ds{\hat{\bx}^{(2)} \,:=\, \bx^{n} + \frac{\Delta t}{\eps}\,\bw^{(2)}
\,=\,\bx^n+\frac1\alpha\left(\bx^{(1)}-\bx^n+\bx^{(2)}-\bx^n\right),}
\\
\,
\\
\ds{\hat{e}^{(2)} \,:=\, e^{n} + \frac{\Delta t}{\eps}\,S^{(2)}
\,=\,e^n+\frac1\alpha\left(e^{(1)}-e^n+e^{(2)}-e^n\right)\,,}
\\
\,
\\
\ds{\hat{\bw}^{(2)} \,:=\, \bw^{n} + \frac{\Delta t}{\eps}\,\bF^{(2)}
\,=\,\bw^n+\frac1\alpha\left(\bw^{(1)}-\bw^n+\bw^{(2)}-\bw^n\right)}
\end{array}\right.
$$
and then construct the new stage
$\left(\bx^{(3)},e^{(3)},\bw^{(3)}\right)$ 
\begin{equation}
\label{scheme:4-3}
\left\{
\begin{array}{l}
\ds{\bx^{(3)} \,=\,   \bx^{n}  \,+\, \frac{(1-\alpha)\Delta t}{\eps}\,\bw^{(2)} \,+\, \frac{\alpha\Delta t}{\eps}\,\bw^{(3)},}
\\
\,
\\
\ds{e^{(3)} \,=\,   e^{n}  \,+\, \frac{(1-\alpha)\Delta t}{\eps}\,S^{(2)} \,+\, \frac{\alpha\Delta t}{\eps}\,S^{(3)},}
\\
\,
\\
\ds{\bw^{(3)} \,=\, \bw^{n} \,+\,  \frac{(1-\alpha)\Delta
    t}{\eps}\,\bF^{(2)} \,+\, \frac{\alpha\Delta
  t}{\eps}\,\bF^{(3)},}
\end{array}\right.
\end{equation}
with
$$
\left\{
\begin{array}{l}
\ds{\bF^{(3)} \,:=\, \bE\left(t^{n+1},\hat{\bx}^{(2)}\right) \,-\, \chi\left(\hat{e}^{(2)},\hat{\bw}^{(2)}\right)\,\nabla_\bx (\ln\left(b\right))(t^{n+1},\hat{\bx}^{(2)}) \,-\,b\left(t^{n+1},\hat{\bx}^{(2)}\right)\,\frac{(\bw^{(3)})^\perp}{\eps}\,,}
\\
\,
\\
\ds{S^{(3)} \,:=\, \bE\left(t^{n+1},\hat{\bx}^{(2)}\right)\cdot \bw^{(3)}\,.}
\end{array}\right.
$$
Finally, we use explicit intermediate values
$\left(\hat{\bx}^{(3)}, \hat{e}^{(3)},\hat{\bw}^{(3)}\right)$ 
$$
\left\{
\begin{array}{l}
\ds \hat{\bx}^{(3)} \,:=\, \bx^{n}  \,+\, \frac{\Delta t}{4\eps}
\left( \bw^{(2)}  + \bw^{(3)} \right)
\,=\,\bx^n+\frac{1}{4\alpha}\left(\bx^{(3)}-\bx^n-\frac{1-2\alpha}{\alpha}(\bx^{(2)}-\bx^n+\bx^{(1)}-\bx^n)\right),
\\
\,
\\
\ds \hat{e}^{(3)} \,:=\, e^{n}  \,+\, \frac{\Delta t}{4\eps}
\left( S^{(2)} + S^{(3)} \right)
\,=\,e^n+\frac{1}{4\alpha}\left(e^{(3)}-e^n-\frac{1-2\alpha}{\alpha}(e^{(2)}-e^n+e^{(1)}-e^n)\right),
\\
\,
\\
\ds \hat{\bw}^{(3)} \,:=\, \bw^{n}  \,+\, \frac{\Delta t}{4\eps}
\left( \bF^{(2)}  + \bF^{(3)} \right)
\,=\,\bw^n+\frac{1}{4\alpha}\left(\bw^{(3)}-\bw^n-\frac{1-2\alpha}{\alpha}(\bw^{(2)}-\bw^n+\bw^{(1)}-\bw^n)\right),
\end{array}\right.
$$
to carry out the fourth stage 
\begin{equation}
\label{scheme:4-4}
\left\{
\begin{array}{l}
\ds\bx^{(4)}  \,=\,  \bx^{n} \,+\, \frac{\beta\Delta
  t}{\eps}\,\bw^{(1)}\,+\, \frac{\eta\Delta t}{\eps}\,\bw^{(2)} \,+\, \frac{\gamma\Delta t}{\eps}\,\bw^{(3)}\,+\, \frac{\alpha\Delta t}{\eps}\,\bw^{(4)}\,,
\\
\,
\\
\ds e^{(4)}  \,=\,  e^{n} \,+\, \frac{\beta\Delta
    t}{\eps}\,S^{(1)}\,+\,\frac{\eta\Delta  t}{\eps}\,S^{(2)}\, +\, \frac{\gamma\Delta t}{\eps}\,S^{(3)}\,+\, \frac{\alpha\Delta t}{\eps}\,S^{(4)}\,,
\\
\,
\\
\ds\bw^{(4)} \, =\, \bw^{n} \,+\, \frac{\beta\Delta
    t}{\eps}\,\bF^{(1)}\,+\, \frac{\eta\Delta t}{\eps}\,\bF^{(2)}\,+\,
  \frac{\gamma\Delta t}{\eps}\,\bF^{(3)}\,+\, \frac{\alpha\Delta
    t}{\eps}\,\bF^{(4)},
\end{array}\right.
\end{equation}
with 
$$
\left\{
\begin{array}{l}
\ds \bF^{(4)} \,:=\, \bE\left(t^{n+1/2},\hat{\bx}^{(3)}\right) \,-\, \chi\left(\hat{e}^{(3)},\hat{\bw}^{(3)}\right)\,\nabla_\bx (\ln\left(b\right))\left(t^{n+1/2},\hat{\bx}^{(3)}\right) \,-\,b\left(t^{n+1/2},\hat{\bx}^{(3)}\right)\,\frac{(\bw^{(4)})^\perp}{\eps},
\\
\,
\\
\ds S^{(4)} \,:=\,
\bw^{(4)}\cdot\bE\left({t}^{n+1/2},\hat{\bx}^{(3)}\right)\,,
\end{array}\right.
$$
where $t^{n+1/2}=t^n+\Delta t/2$. The numerical solution at the new
time step is finally given by
\begin{equation}
\label{scheme:4-5}
\left\{
\begin{array}{l}
\bx^{n+1} \,=\, \ds \bx^{n}  \,+\, \frac{\Delta t}{6\eps} \left( \bv^{(2)} \,+\,
  \bv^{(3)}  \,+\, 4\, \bv^{(4)} \right),
\\
\,
\\
e^{n+1} \,=\, \ds e^{n}  \,+\, \frac{\Delta t}{6\eps} \left( S^{(2)} \,+\,
  S^{(3)}  \,+\, 4\, S^{(4)} \right),
\\
\,
\\
\bw^{n+1} \,=\, \ds \bw^{n}  \,+\, \frac{\Delta t}{6\eps} \left( \bF^{(2)} \,+\,
  \bF^{(3)}  \,+\, 4 \,\bF^{(4)} \right).
\end{array}\right.
\end{equation}

We emphasize that in all our schemes at each stage the implicit computation only requires the resolution of a two-by-two linear system. Therefore the computational effort is of the same order as  fully explicit schemes.

\begin{proposition}[Consistency in the
  limit $\eps\rightarrow 0$ for a fixed $\Delta t$]
\label{prop:5}
Let us consider a time step $\Delta t>0$, a  final time $T>0$  and
set $N_T=\lfloor T/\Delta t\rfloor$. Assume that $(\bx^n_\eps,\bw^n_\eps,e^{n}_\eps)_{0\leq
  n\leq N_T}$ is a sequence obtained by \eqref{scheme:4-1}-\eqref{scheme:4-5} and such that
\begin{itemize}
\item for all $1\leq n\leq N_T$, $\left(\bx^n_\eps,\eps \bw^n_\eps, e^n_\eps\right)_{\eps>0}$ is uniformly bounded with respect to $\eps>0$;
\item $\left(\bx^0_\eps,\bw_\eps^0,e^0_\eps\right)_{\eps>0}$ converges in the limit $\eps\rightarrow 0$ to some $(\by^0,\bw^0,g^0)$. 
\end{itemize}
Then, for any $1\leq n\leq N_T$, $(\bx^n_\eps,e^n_\eps)_{\eps>0}$ converges to some $(\by^n,g^n)$ as $\eps\rightarrow 0$ and the limiting sequence $(\by^n,g^n)_{0\leq n\leq N_T}$ solves 
\begin{equation}
\left\{
\begin{array}{l}
\ds\by^{n+1} = \by^{n} \,+\, \frac{\Delta t}{6} \,\left( \bU^{n} \,+\, \bU^{(1)} \,+\, 4\,\bU^{(2)}\right)\\\ds
g^{n+1} = g^{n} \,+\, \frac{\Delta t}{6} \,\left( T^{n} \,+\, T^{(1)} \,+\, 4\,T^{(2)}\right)
\label{sch:y40}
\end{array}\right.
\end{equation}
where 
$$
\left\{
\begin{array}{l}
\ds \bU^{n} = -\frac{1}{b(t^n,\by^n)} \Big( \bE(t^n,\by^n) - g^n \,\nabla_\by(\ln(b))(t^n,\by^n)\Big)^\perp,
\\
\,
\\
\ds T^{n} = g^n \,\frac{\nabla_\by^\perp b}{b^2}(t^n,\by^n) \cdot \bE(t^n,\by^n),
\end{array}\right.
$$
whereas $\bU^{(1)}$ and $T^{(1)}$ are given by
$$
\left\{
\begin{array}{l}
\ds \bU^{(1)} = -\frac{1}{b(t^{n+1},\by^{(1)})} \Big( \bE(t^{n+1},\by^{(1)}) - g^{(1)} \,\nabla_\by(\ln(b))(t^{n+1},\by^{(1)})\Big)^\perp,
\\
\,
\\
\ds T^{(1)} = g^{(1)} \,\frac{\nabla_\by^\perp b}{b^2}(t^{n+1},\by^{(1)}) \cdot \bE(t^{n+1},\by^{(1)})
\end{array}\right.
$$
and $\bU^{(2)}$ and $T^{(2)}$
$$
\left\{
\begin{array}{l}
\ds \bU^{(2)} = -\frac{1}{b(t^{n+1/2},\by^{(2)})} \Big( \bE(t^{n+1/2},\by^{(2)}) - g^{(2)} \,\nabla_\by\ln(b (t^{n+1/2},\by^{(2)}))\Big)^\perp,
\\
\,
\\
\ds T^{(2)} = g^{(2)} \,\frac{\nabla_\by^\perp b}{b^2}(t^{n+1/2},\by^{(2)}) \cdot \bE(t^{n+1/2},\by^{(2)})
\end{array}\right.
$$
with $t^{n+1/2}=t^n+\Delta t/2$,
$$
\by^{(1)}\, =\, \by^n \,+\, \Delta t\, \bU^n,\qquad g^{(1)} \,=\, g^n \,+\, \Delta t\, T^n\,,
$$
and
$$
\by^{(2)}\, =\, \by^n \,+\, \frac{\Delta t}{4}\,
\left(\bU^n+\bU^{(1)}\right),\qquad g^{(2)} \,=\, g^n \,+\,
\frac{\Delta t}{4}\, \left( T^n + T^{(1)}\right),
$$
which provides a consistent third-order approximation of the gyro-kinetic system \eqref{traj:limit}.
\end{proposition}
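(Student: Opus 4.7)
The plan is to mimic, stage by stage, the argument already used for Propositions~\ref{prop:1} and~\ref{prop:3}, propagating uniform $\eps$-bounds through the four stages and then identifying limits along an extracted subsequence. For fixed $1\leq n\leq N_T-1$, I would first rewrite the third equation of \eqref{scheme:4-1} as
$$
\frac{(\bw^{(1)})^\perp}{\eps} \,=\, \frac{1}{b(t^n,\bx^n_\eps)}\left(\bE(t^n,\bx^n_\eps) - \chi(e^n_\eps,\bw^n_\eps)\,\nabla_\bx\ln b(t^n,\bx^n_\eps) - \frac{\eps\bw^{(1)}-\eps\bw^n_\eps}{\alpha\,\Delta t}\right),
$$
and, using \eqref{hyp:1}, \eqref{hyp:2bis} and the standing bounds on $(\bx^n_\eps,\eps\bw^n_\eps,e^n_\eps)_\eps$, deduce that $(\eps^{-1}\bw^{(1)}_{n+1,\eps})_\eps$ is bounded (hence $\bw^{(1)}_{n+1,\eps}\to 0$); the first two equations of \eqref{scheme:4-1} then give bounds on $(\bx^{(1)}_{n+1,\eps},e^{(1)}_{n+1,\eps})_\eps$. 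The same mechanism applied successively to the third equations of \eqref{scheme:4-2}, \eqref{scheme:4-3} and \eqref{scheme:4-4} yields bounds on $(\eps^{-1}\bw^{(j)}_{n+1,\eps})_\eps$ for $j=2,3,4$ and on the intermediate quantities $(\hat{\bx}^{(2)},\hat{e}^{(2)},\hat{\bx}^{(3)},\hat{e}^{(3)})$, together with the convergence of $\bw^{(j)}_{n+1,\eps}$, $\hat{\bw}^{(2)}_{n+1,\eps}$ and $\hat{\bw}^{(3)}_{n+1,\eps}$ to zero.

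Next I would extract a subsequence along which all these bounded families converge and identify the limits of $\eps^{-1}\bw^{(j)}_{n+1,\eps}$ in turn. Passing to the limit in the third equation of \eqref{scheme:4-1}, using $\bw^n_\eps\to 0$ and $\chi(g^n,0)=g^n$ from \eqref{hyp:2}, produces the drift $\bU^n$. The third equation of \eqref{scheme:4-2} (where $\chi$ is still evaluated at $(e^n_\eps,\bw^n_\eps)$ and $\bw^{(1)}_{n+1,\eps}\to 0$) yields again $\bU^n$ as limit of $\eps^{-1}\bw^{(2)}_{n+1,\eps}$; the second forms of the definitions of $\hat{\bx}^{(2)}$ and $\hat{e}^{(2)}$ (rewritten through the $1/\alpha$ factor) then imply $\hat{\bx}^{(2)}_{n+1,\eps}\to \by^n+\Delta t\,\bU^n=\by^{(1)}$ and $\hat{e}^{(2)}_{n+1,\eps}\to g^n+\Delta t\,T^n=g^{(1)}$. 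Passing to the limit in the third equation of \eqref{scheme:4-3} then gives $\bU^{(1)}$ as limit of $\eps^{-1}\bw^{(3)}_{n+1,\eps}$, after which the rewritten formulas for $\hat{\bx}^{(3)},\hat{e}^{(3)}$ show they converge to $\by^{(2)},g^{(2)}$ respectively. Finally the third equation of \eqref{scheme:4-4} yields $\bU^{(2)}$ as limit of $\eps^{-1}\bw^{(4)}_{n+1,\eps}$. Substituting these four limits in the first and second equations of \eqref{scheme:4-5} produces exactly \eqref{sch:y40}.

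Since the limits at each stage are uniquely determined by $(\by^n,g^n)$ (and, for $n=0$, also by $\bw^0$ which need not vanish, requiring a separate explicit treatment of the initial step analogous to \eqref{sch:y0-init}), the standard extraction-uniqueness argument upgrades subsequence convergence to convergence of the whole family. The third-order consistency of \eqref{sch:y40} with \eqref{traj:limit} then follows by observing that the explicit Butcher tableau encoded in \eqref{scheme:4-1}--\eqref{scheme:4-5} was constructed to be third-order in \cite{BFR:15}, and the implicit part only serves to invert the stiff $\bw^\perp$-rotation at each stage, a role which in the limit $\eps\to 0$ is taken over by the explicit drift $\bU$.

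The main obstacle is really just careful bookkeeping: each stage's limit identification must use only quantities already controlled at earlier stages, and the precise coefficients $(\alpha,\beta,\gamma,\eta)$ appearing in \eqref{scheme:4-1}--\eqref{scheme:4-4} must be checked to conspire so that $\hat{\bx}^{(j)},\hat{e}^{(j)}$ converge to the correct midpoints $\by^{(j-1)},g^{(j-1)}$ of the limiting scheme. The alternative rewritings of $\hat{\bx}^{(2)},\hat{\bx}^{(3)}$ and their analogues, already displayed in the statement of the scheme, are precisely tailored to make this check transparent. Beyond that verification, no new idea relative to Propositions~\ref{prop:1} and~\ref{prop:3} is needed.
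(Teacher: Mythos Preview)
Your proposal is correct and follows essentially the same approach as the paper, which in fact omits the proof entirely with the remark that it is ``almost identical to the one of Proposition~\ref{prop:3}''. Your stage-by-stage propagation of bounds on $\eps^{-1}\bw^{(j)}$, extraction of a subsequence, identification of the successive drift limits $\bU^n,\bU^n,\bU^{(1)},\bU^{(2)}$ via the third equations (using $\chi(g,0)=g$), and the final uniqueness argument are precisely the expected adaptation of that earlier proof to the four-stage scheme; your separate handling of $n=0$ also mirrors the treatment of \eqref{sch:y0-init} in Proposition~\ref{prop:1}.
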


We omit the proof of Proposition~\ref{prop:5} as almost identical to the one of Proposition~\ref{prop:3}.

\begin{remark}
For both our second and third-order schemes, the limit obtained when $\eps\to0$  fails to satisfy the first step of the limiting schemes by a first-order error. This is obviously a more serious problem that in the first-order case. However, though we do not pursue this line of investigation here, one should be able to fix this issue by replacing the very first step (from $t^0$ to $t^1$) of the original ($\eps$-dependent) computation by a step where third equations of intermediate values are also implicited.
\end{remark}

\section{Numerical simulations}
\label{sec:5}
\setcounter{equation}{0}

In this section, we provide examples of numerical computations to validate and compare
the different time discretization schemes introduced in the previous section. 

We first consider the motion of a single particle under the effect of a given
electromagnetic field. It allows us to  illustrate the ability of the semi-implicit schemes
to capture in the limit $\varepsilon\rightarrow 0$ drift velocities due to variations of magnetic and electric fields, even with large time steps $\Delta t $ .

Then we consider the Vlasov-Poisson system with an external non uniform magnetic field.  As already mentioned we implement a classical particle-in-cell method but with specific different time discretization techniques to compute the particle trajectories. In particular, in this case, the collection of charged particles moves collectively and gives rise to a self-consistent electric field, obtained by solving numerically the Poisson equation in a space grid. 

\subsection{One single particle motion without electric field}
Before simulating at the statistical level, we investigate on the motion of individual particles in a given magnetic field the accuracy and stability properties with respect to $\eps>0$ of the semi-implicit algorithms presented in Section~\ref{sec:3}.  

Numerical experiments of the present subsection are run with a zero electric field $\bE=0$, and a time-independent external magnetic field corresponding to 
$$
b\,:\quad \RR^2\to\RR\,,\qquad \bx=(x,y)\,\mapsto\,1\,+\,\alpha\,x^2
$$
with $\alpha=0.5$. Moreover we choose for all simulations the initial data as
$\bx^0=(5,4)$, $\bv^0=(5,6)$, whereas the final time is $T=2$. In this case, 
the asymptotic drift velocity predicted by the limiting model~\eqref{traj:limit} is explicitly given by 
$$
\bU(\bx,e) \,=\, \frac{2\,\alpha\,e}{(1+\alpha \,x^2)^2}
\left(\begin{array}{l} 0\\x\end{array}\right).
$$

First, for comparison, we compute a reference solution $(\bX^\eps,\bw^\eps,e^\eps)_{\eps>0}$
to the initial problem \eqref{traj:ter} thanks to an explicit
fourth-order Runge-Kutta scheme used with a very small time step of
the order of $\eps^2$ and a reference solution $(\bY,g)$ to the (non
stiff) asymptotic model \eqref{traj:limit} obtained when $\eps
\rightarrow 0$. Recall that the derivation of \eqref{traj:limit} also
shows weak convergence of $\eps^{-1}\bw^\eps$ to $\bU^0=\bU(\bY,g)$ in
the limit $\eps\to0$. Then we compute an approximate solution
$(\bX^\eps_{\Delta t}, \bw^\eps_{\Delta t}, e^\eps_{\Delta t})$  using either \eqref{scheme:3-1}--\eqref{scheme:3-3} or \eqref{scheme:4-1}--\eqref{scheme:4-5}, and compare them to the reference solutions. 

Our goal is  to evaluate the accuracy of the numerical solution
$(\bX^\eps_{\Delta t}, \bw^\eps_{\Delta t}, e^\eps_{\Delta t})$ for various regimes when both $\eps$ and $\Delta t$ vary. Computed errors are measured in discrete $L^1$ norms. For instance, we set\footnote{When necessary and not too confusing, as here, we use the same piece of notation for continuous time-dependent functions and their discrete counterpart, obtained by restriction to discrete times.}
$$
\left\{
\begin{array}{ll}
\ds\|\bX^\eps_{\Delta t} - \bX^\eps\|\,:=\, \frac{\Delta t
}{T}\sum_{n=0}^{N_T}  \|\bX^{\eps,n}_{\Delta t} - \bX^\eps(t^n) \|\,,
\\[0.5em]
\ds\eps^{-1}\,\ds\|\bw^\eps_{\Delta t} - \bw^\eps\|\,:=\, \frac{\Delta t
}{\eps\, T}\sum_{n=0}^{N_T}  \|\bw^{\eps,n}_{\Delta t} - \bw^\eps(t^n) \|\,.
\end{array}\right.
$$

In Figure~\ref{fig0:1}, we present the numerical error on space and velocity\footnote{In graphics we use $\bv$ as a short-hand for $\eps^{-1}\bw$. This should not be confused with the actual reconstruction of the velocity variable used at the statistical level.} variables between the reference solution for the initial problem \eqref{traj:00} and the one  obtained with the third-order scheme \eqref{scheme:4-1}-\eqref{scheme:4-5}. As expected for a fixed time step $\Delta t$ (taken here between $0.0025$ and $0.01$), the scheme is quite stable even in the limit $\eps\rightarrow 0$ and the error on the space variable, measured by $\|\bX^\eps_{\Delta t} - \bX^\eps\|$, is uniformly small and reach a maximum on intermediate regimes, $\eps \in (0.05,\;0.5)$. In contrast, for a fixed time step, the error on the velocity variable, measured by $\eps^{-1}\,\|\bw^\eps_{\Delta t} - \bw^\eps\|$, is small  for large values of $\eps$, but gets very large when $\eps\rightarrow 0$ since the scheme cannot follow high-frequency time oscillations of order $\eps^{-2}$ when $\eps\ll\sqrt{\Delta t}$.
 
\begin{figure}[ht!]
\begin{center}
 \begin{tabular}{cc}
\includegraphics[width=8.cm]{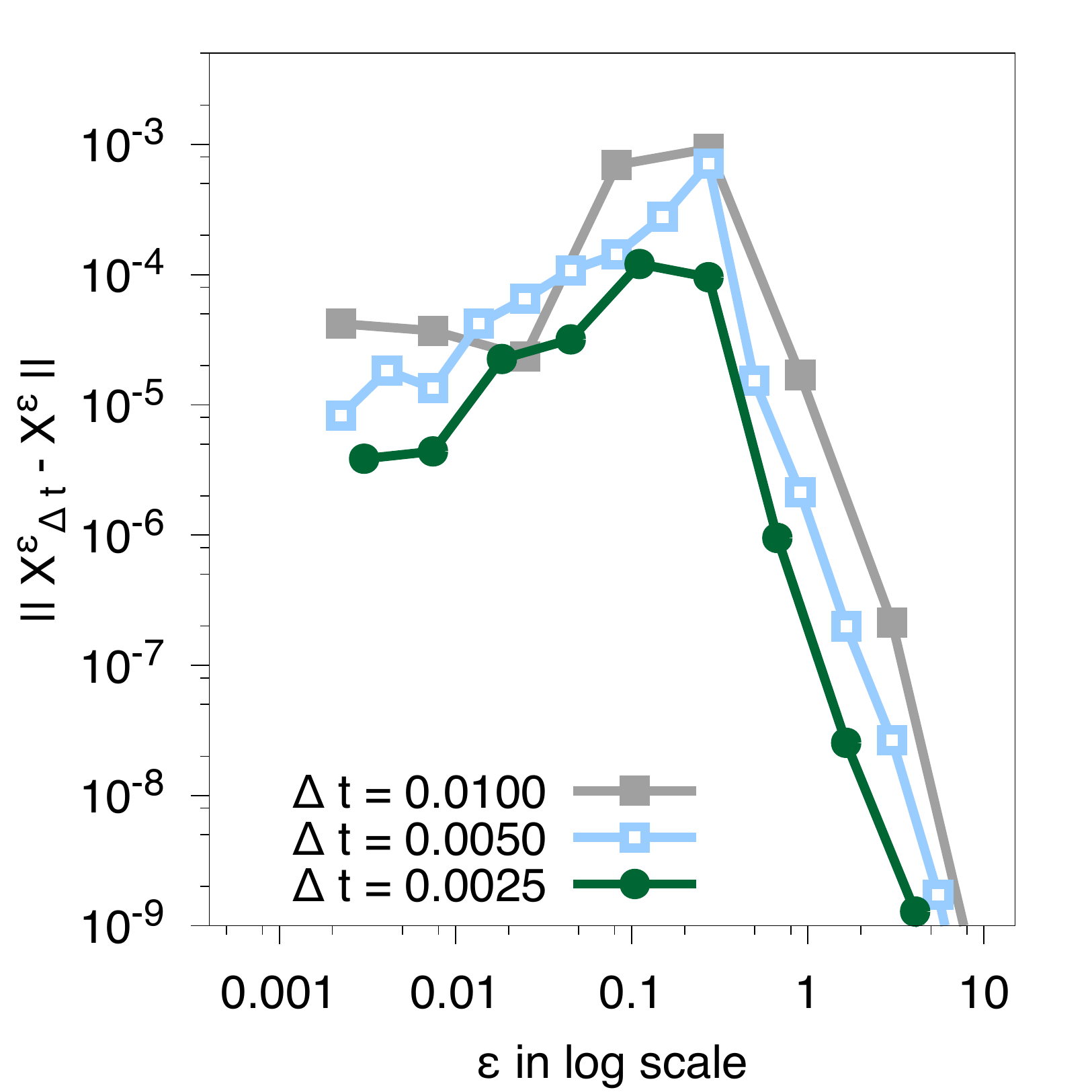} &    
\includegraphics[width=8.cm]{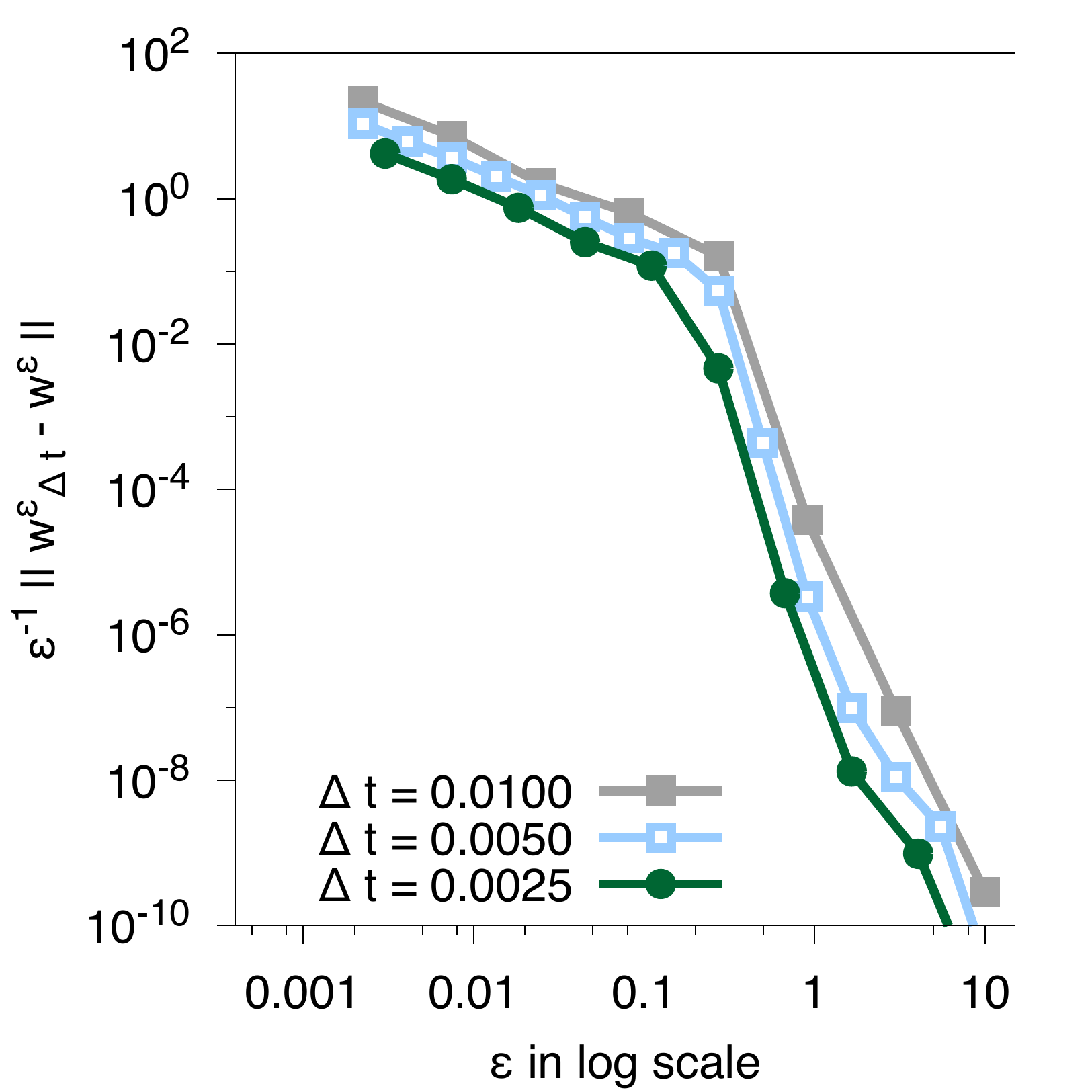} 
\\
(a)  & (b)  
\end{tabular}
\caption{\label{fig0:1}
{\bf One single particle motion without electric field.} Numerical errors (a)
$\|\bX^\eps_{\Delta t}-\bX^\eps\|$,  (b)
$\eps^{-1}\,\|\bw^\eps_{\Delta t}-\bw^\eps\|$ obtained for different time steps $\Delta t$ with third-order scheme \eqref{scheme:4-1}-\eqref{scheme:4-5}, plotted as functions of $\eps$.}
 \end{center}
\end{figure}

In Figure \ref{fig0:2}, for the same set of simulations we plot errors with respect to the limiting system~\eqref{traj:limit}. This shows, see Figure \ref{fig0:2} (b), that for a fixed $\Delta t$ we do capture however, in the limit $\eps\to0$, correct drift velocities. More generally, as expected, errors on both space and velocity variables with respect to limiting asymptotic values gets smaller and smaller as $\eps\to0$. 

\begin{figure}[ht!]
\begin{center}
 \begin{tabular}{cc}
\includegraphics[width=8.cm]{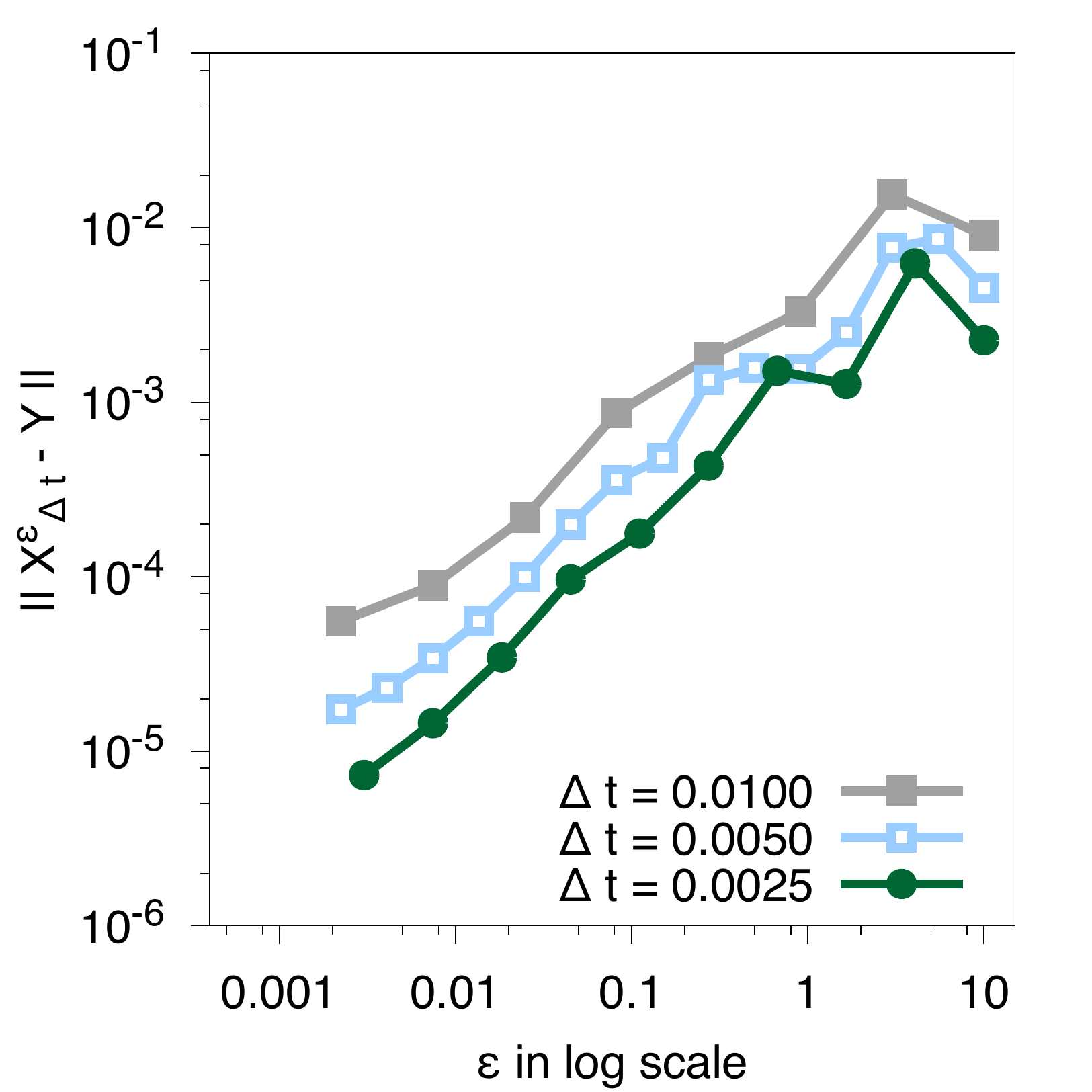} &    
\includegraphics[width=8.cm]{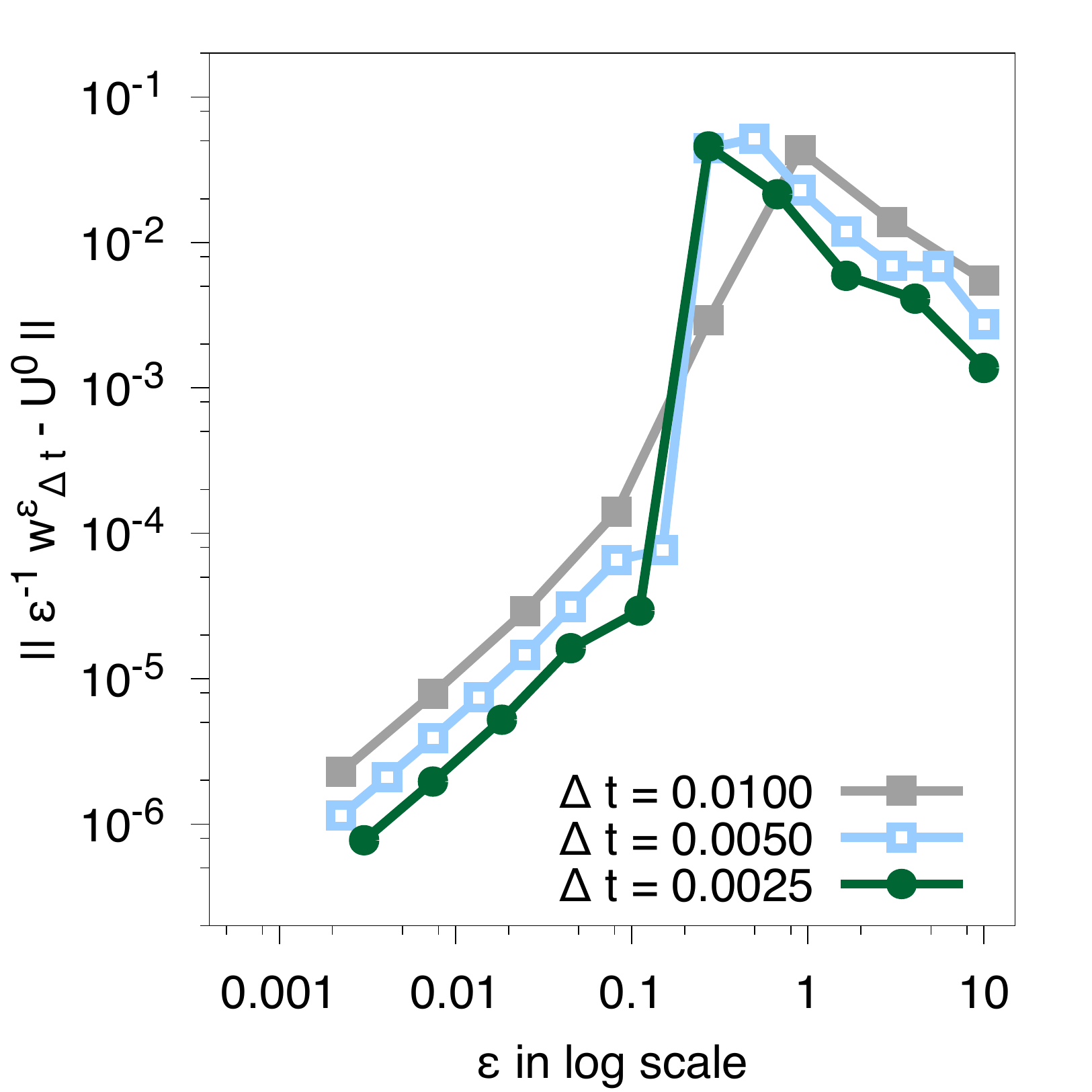} 
\\
(a)  & (b)  
\end{tabular}
\caption{\label{fig0:2}
{\bf One single particle motion without electric field.} Numerical errors (a)
$\|\bX^\eps_{\Delta t}-\bY\|$,  (b)
$\|\eps^{-1}\,\bw^\eps_{\Delta t}-\bU^0\|$ obtained for different time steps $\Delta t$ with third-order scheme \eqref{scheme:4-1}-\eqref{scheme:4-5}, plotted as functions of $\eps$.}
 \end{center}
\end{figure}

Finally, in Figure~\ref{fig0:3} we show similar quantities for the second-order scheme \eqref{scheme:3-1}-\eqref{scheme:3-3} holding the time step fixed to $\Delta t = 0.01$. For comparison, we also plot results of the third-order scheme. Errors of the third-order scheme are smaller but behaviors are qualitatively the same : uniform precision on the slow variable, convergence to asymptotic descriptions as $\eps\to0$ and consistency of the velocity variable with the velocity of the true solution when $\eps\gg\sqrt{\Delta t}$ and with the asymptotic drift when $\eps\ll\sqrt{\Delta t}$.

\begin{center}
\begin{figure}[ht!]
 \begin{tabular}{cc}
\includegraphics[width=8.cm]{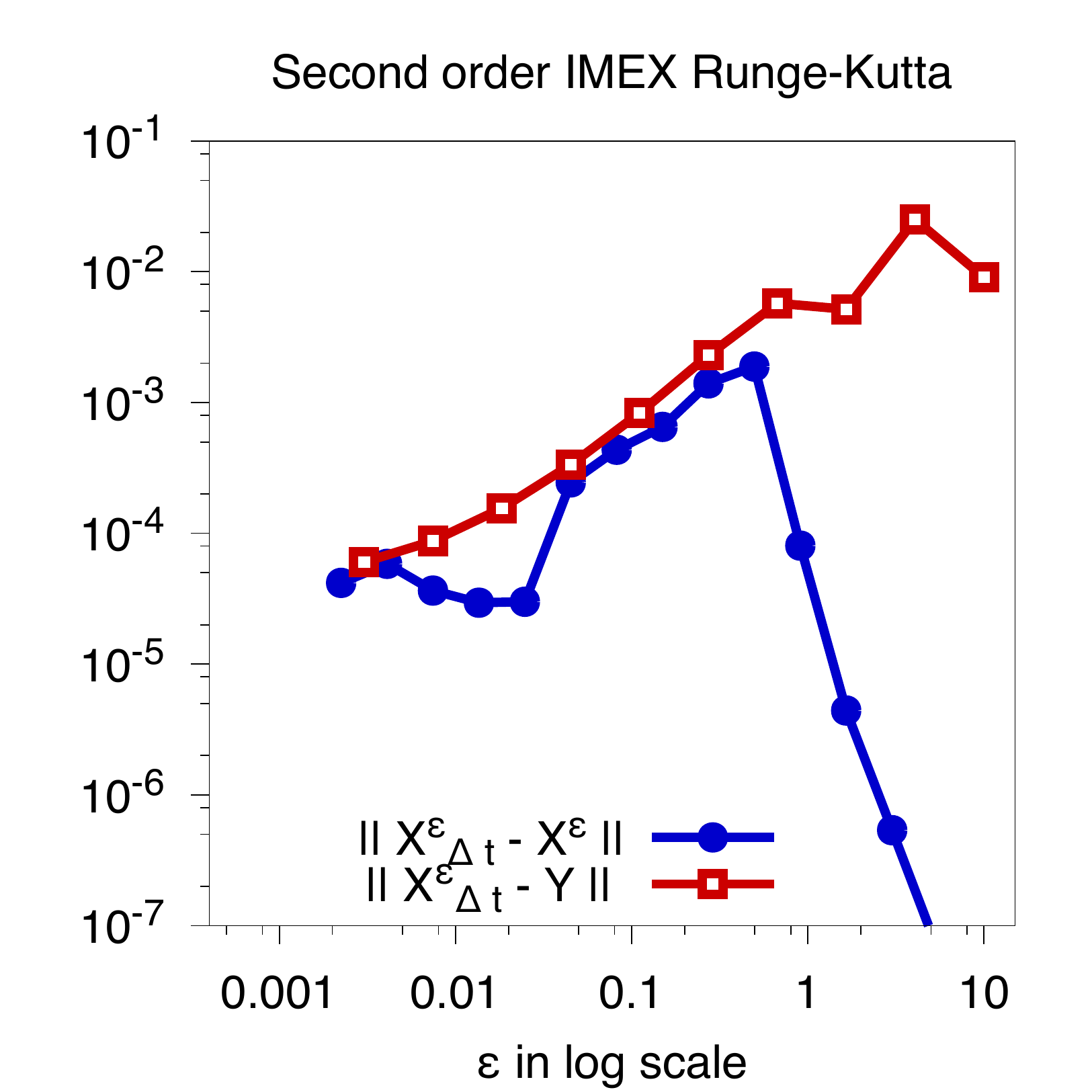} &    
\includegraphics[width=8.cm]{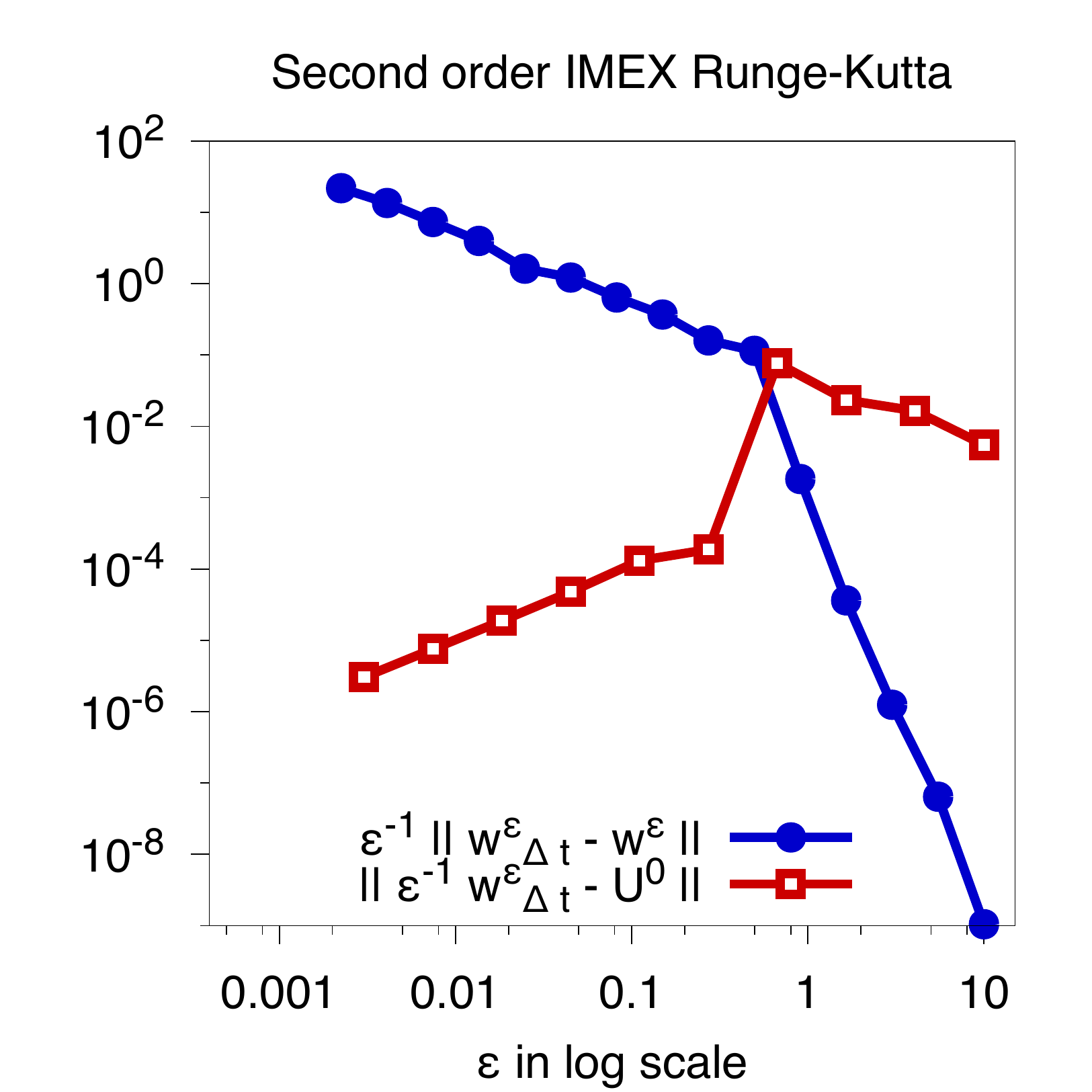}  
 \\
\includegraphics[width=8.cm]{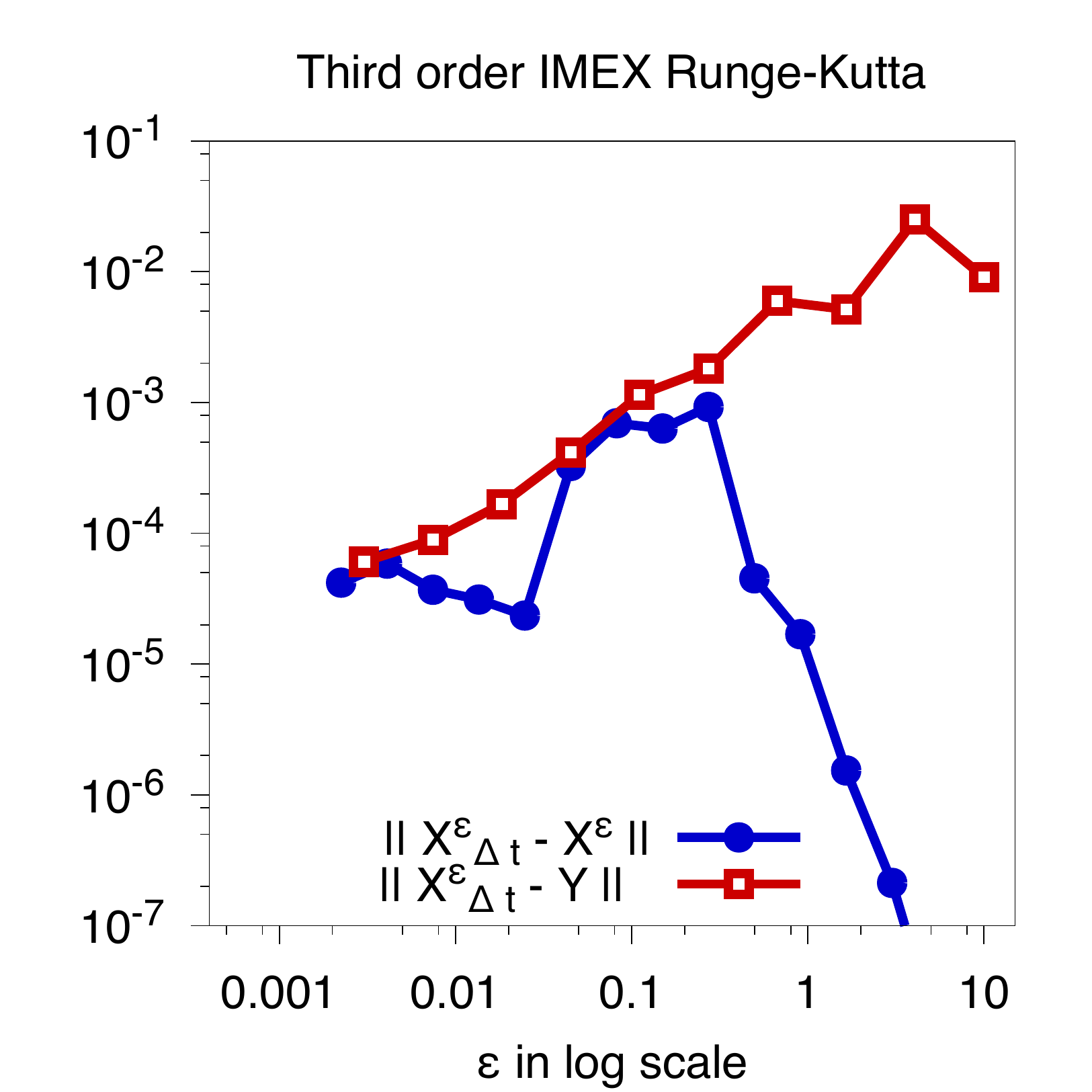} &    
\includegraphics[width=8.cm]{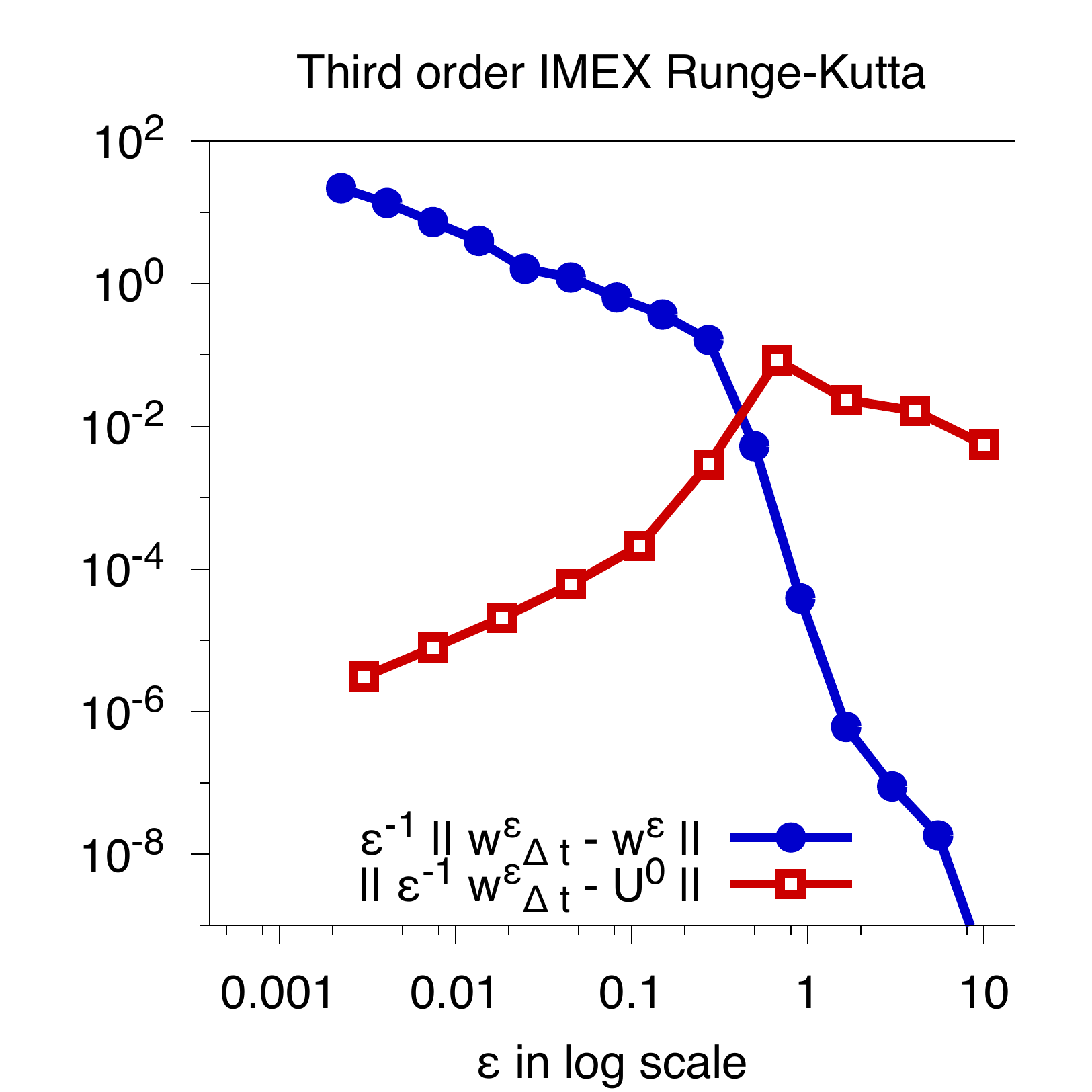}  
\\ 
(a)  & (b)  
\end{tabular}
\caption{\label{fig0:3}
{\bf One single particle motion without electric field.} Numerical errors on (a)
$(\bX^\eps_{\Delta t})_{\eps>0}$  and (b) $(\eps^{-1}\,\bw^\eps_{\Delta t})_{\eps>0}$ obtained with second-order scheme \eqref{scheme:3-1}-\eqref{scheme:3-3} and third-order scheme \eqref{scheme:4-1}-\eqref{scheme:4-5}, plotted as functions of $\eps$.}
\end{figure}
\end{center}

\subsection{One single particle motion with an electromagnetic field}
We now add to the otherwise unmodified previous setting a non trivial electric field 
$$
\bE\,:\quad \RR^2\to\RR\,,\qquad \bx=(x,y)\,\mapsto\,(0,-y)\,.
$$
In this case, at the limit $\eps\to0$ the behavior of the microscopic kinetic energy remains non trivial (in contrast with what happens in the previous subsection) so that we also measure errors on its evaluations. Moreover the electric field also contributes to asymptotic drift velocities, that are now given by
$$
\bU^0(\bx,e) \,=\, \frac{1}{(1+\alpha\,x^2)} \left(\begin{array}{l} -y \\\,\\
                                           \ds\frac{2\,\alpha\,e\,x}{1+\alpha\,x^2}\end{array}\right),
$$
with $\alpha=1/2$. This simple geometric configuration allows us to clearly distinguish the contribution to the drift velocity due to the interaction of the electric field with the magnetic field from purely magnetic effects due to the gradient of the magnetic field.

Quantitative error measurements are provided in Figure~\ref{fig1:1}. Qualitative features are completely analogous to those of the previous subsection, the microscopic kinetic energy being approximated with essentially the same accuracy as the other slow variable $\bx$.

\begin{center}
\begin{figure}[ht!]
 \begin{tabular}{ccc}
\includegraphics[width=5.3cm]{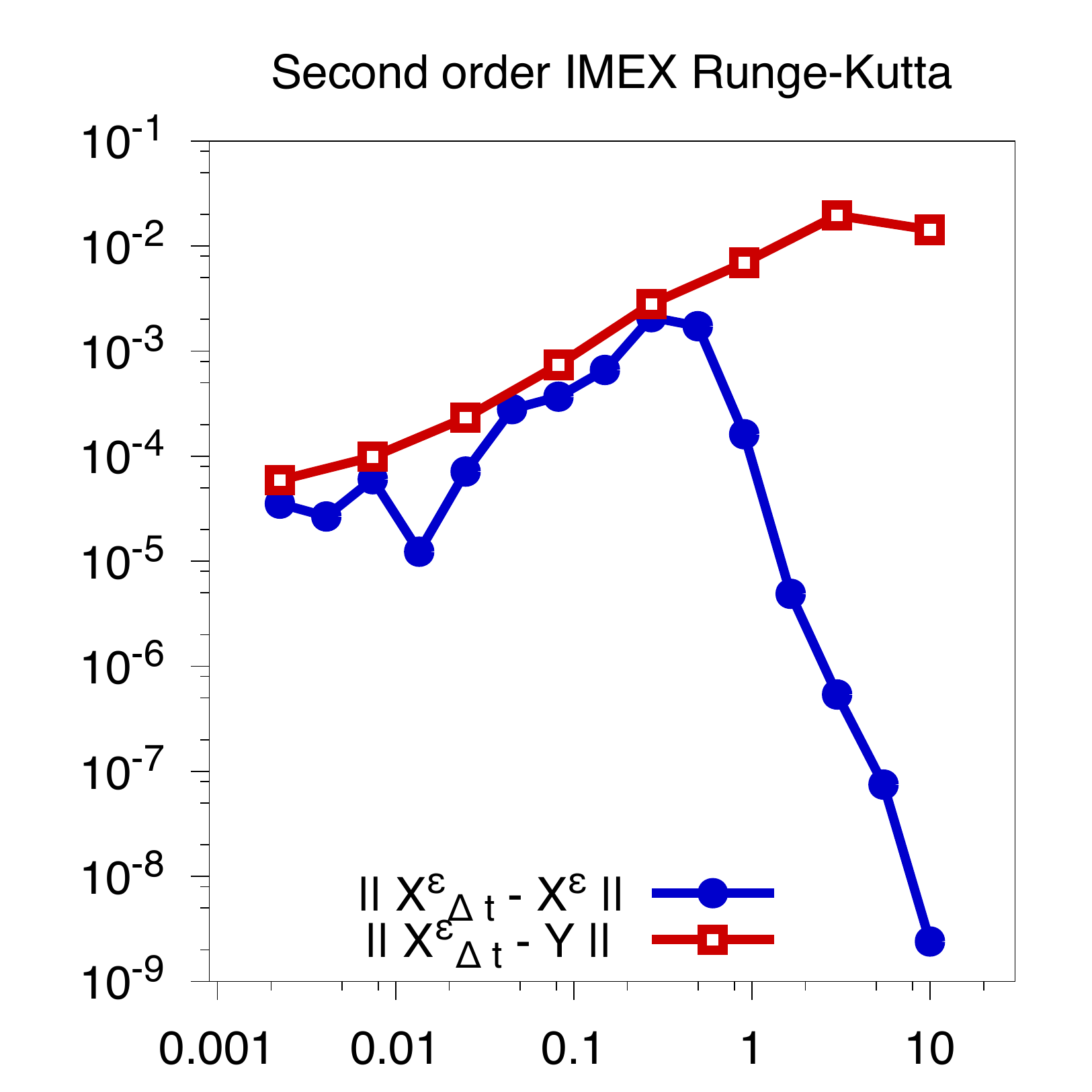} &   
\includegraphics[width=5.3cm]{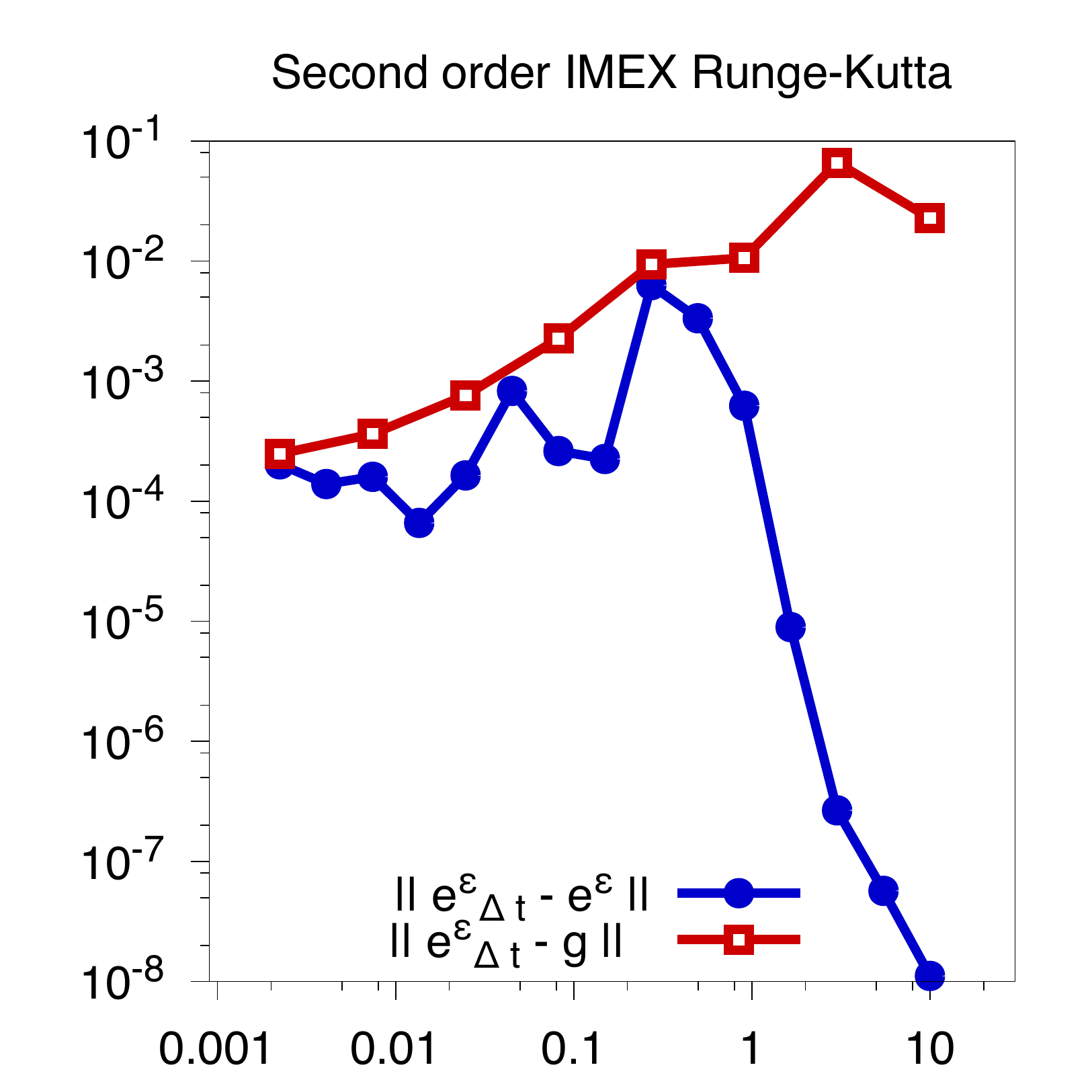} &    
\includegraphics[width=5.3cm]{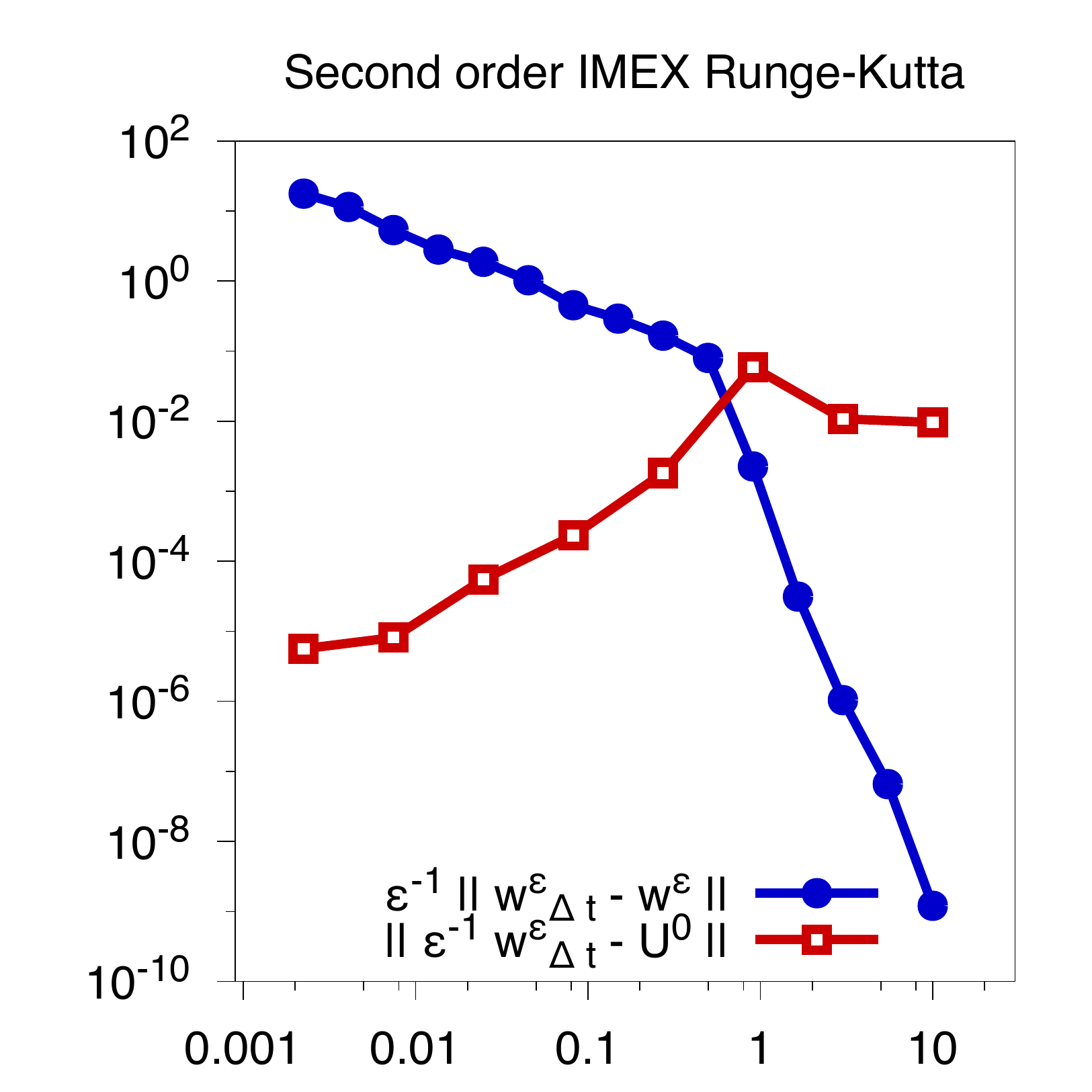}  
 \\
\includegraphics[width=5.3cm]{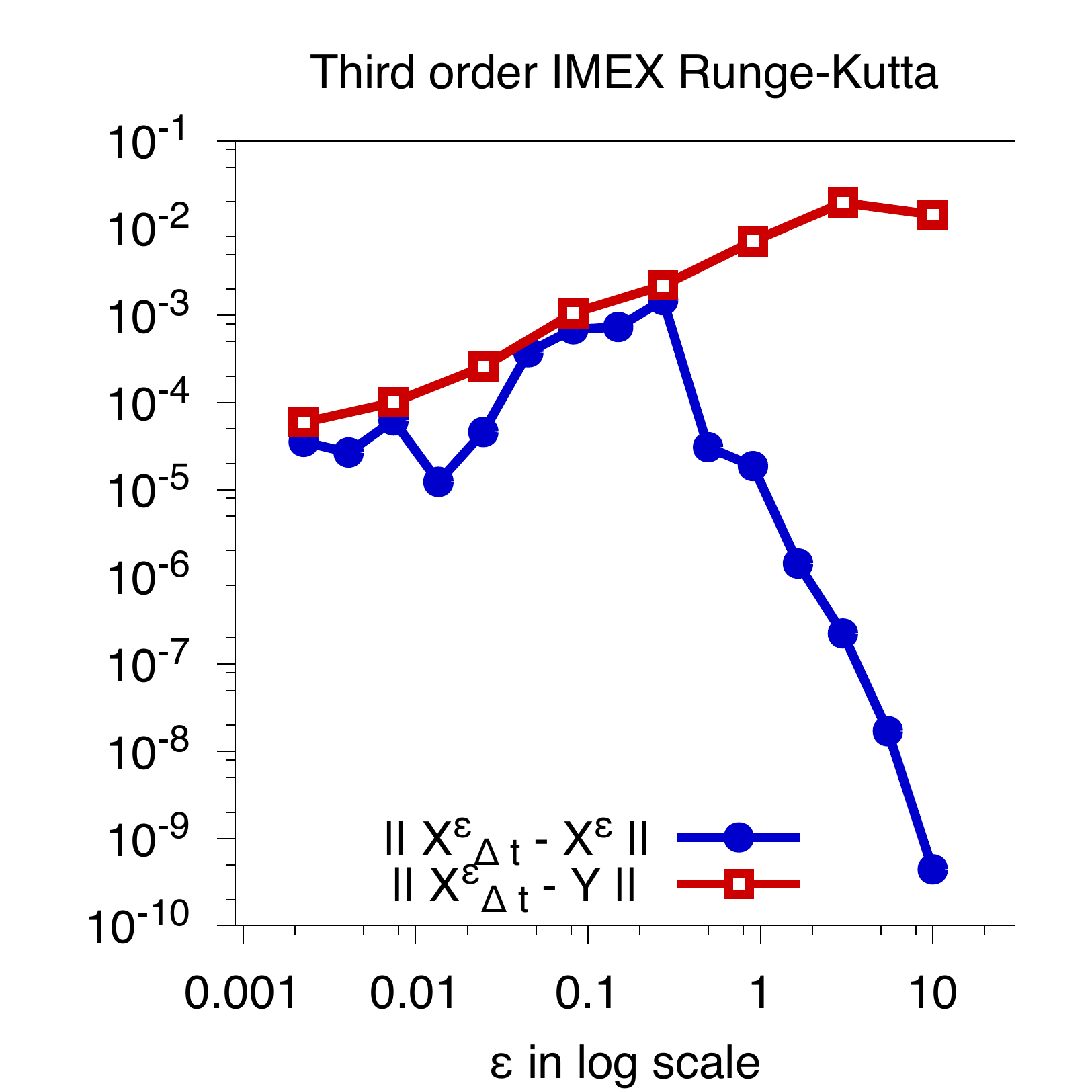} &    
\includegraphics[width=5.3cm]{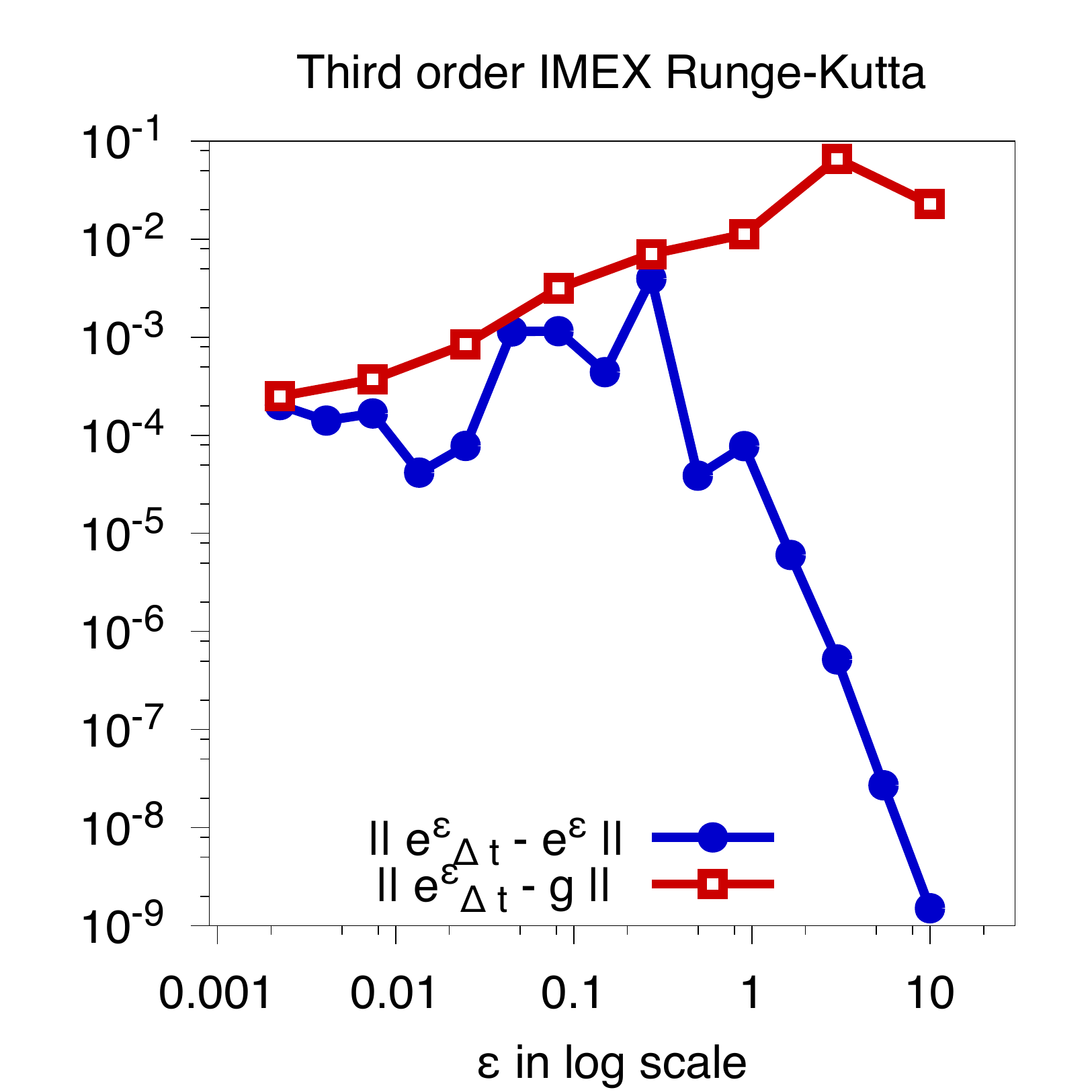} & 
\includegraphics[width=5.3cm]{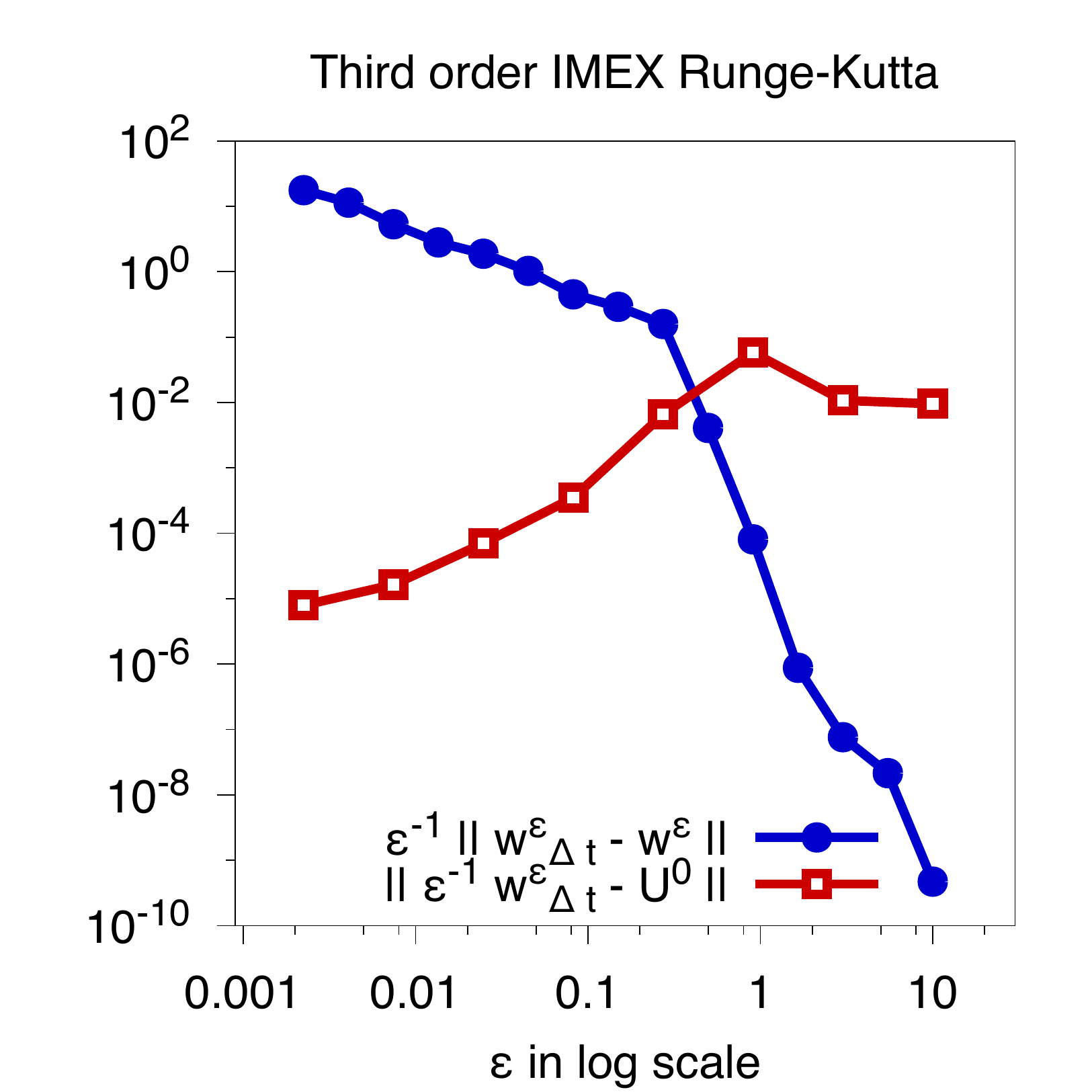}  
\\ 
(a)  & (b)   & (c)  
\end{tabular}
\caption{\label{fig1:1}
{\bf One single particle motion with an  electromagnetic field.} Numerical errors on (a)
$(\bX^\eps_{\Delta t})_{\eps>0}$,  (b) $(e^\eps_{\Delta
  t})_{\eps>0}$ and (c) $(\eps^{-1}\,\bw^\eps_{\Delta t})_{\eps>0}$ obtained with the second-order scheme
\eqref{scheme:3-1}-\eqref{scheme:3-3} and third-order scheme
\eqref{scheme:4-1}-\eqref{scheme:4-5}, plotted as functions of $\eps$.}
\end{figure}
\end{center}

To visualize rather than quantify what happens in the limit $\eps\to0$, we also represent now space trajectories corresponding to different values of $\eps$, but holding the time step fixed to $\Delta t=0.01$. On the reference solution one sees as $\eps\to0$ faster and faster oscillations --- of order $\eps^{-2}$ --- but with smaller and smaller amplitude --- of order $\eps$ --- around guiding center trajectories as predicted by\footnote{Note incidentally that this illustrates a classical abuse in terminology, used throughout the present paper. Variables $(\bX^\eps,e^\eps)$ are not really slow but they are uniformly close to genuinely slow variables.} \eqref{traj:limit}. When $\eps\gg0.1$ the approximate solution follows closely the reference solution, whereas when $\eps\ll0.1$ they are also almost indistinguishable though the approximate solution does not reproduce the infinitely fast oscillations of the true solution. Main discrepancies are observed for intermediate values, here exemplified by the case $\eps=0.2$, when the true solution is already too fast for the approximate solution but not yet close enough to the guiding center dynamics. 

\begin{center}
\begin{figure}[ht!]
 \begin{tabular}{cc}
\includegraphics[width=8.cm]{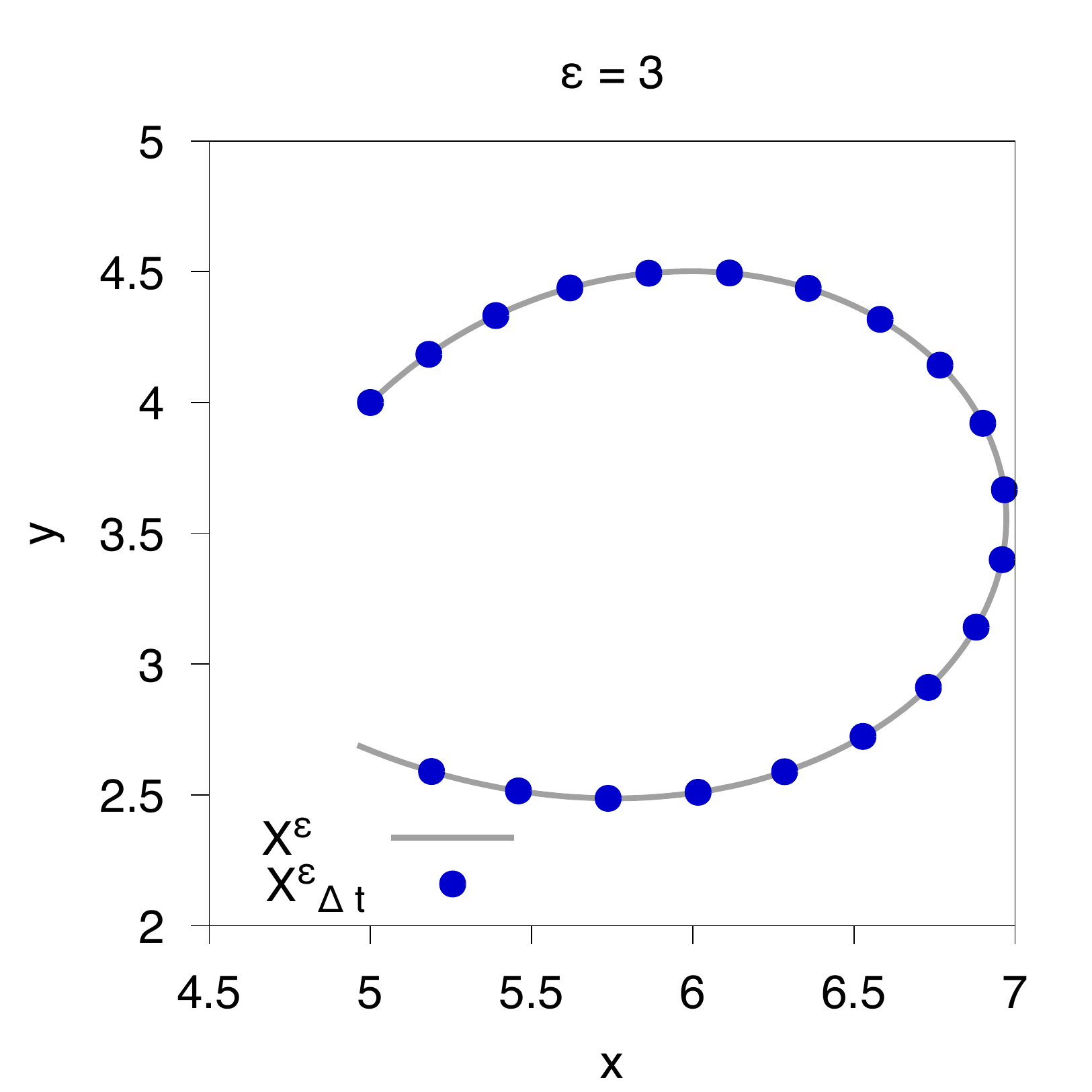} &   
\includegraphics[width=8.cm]{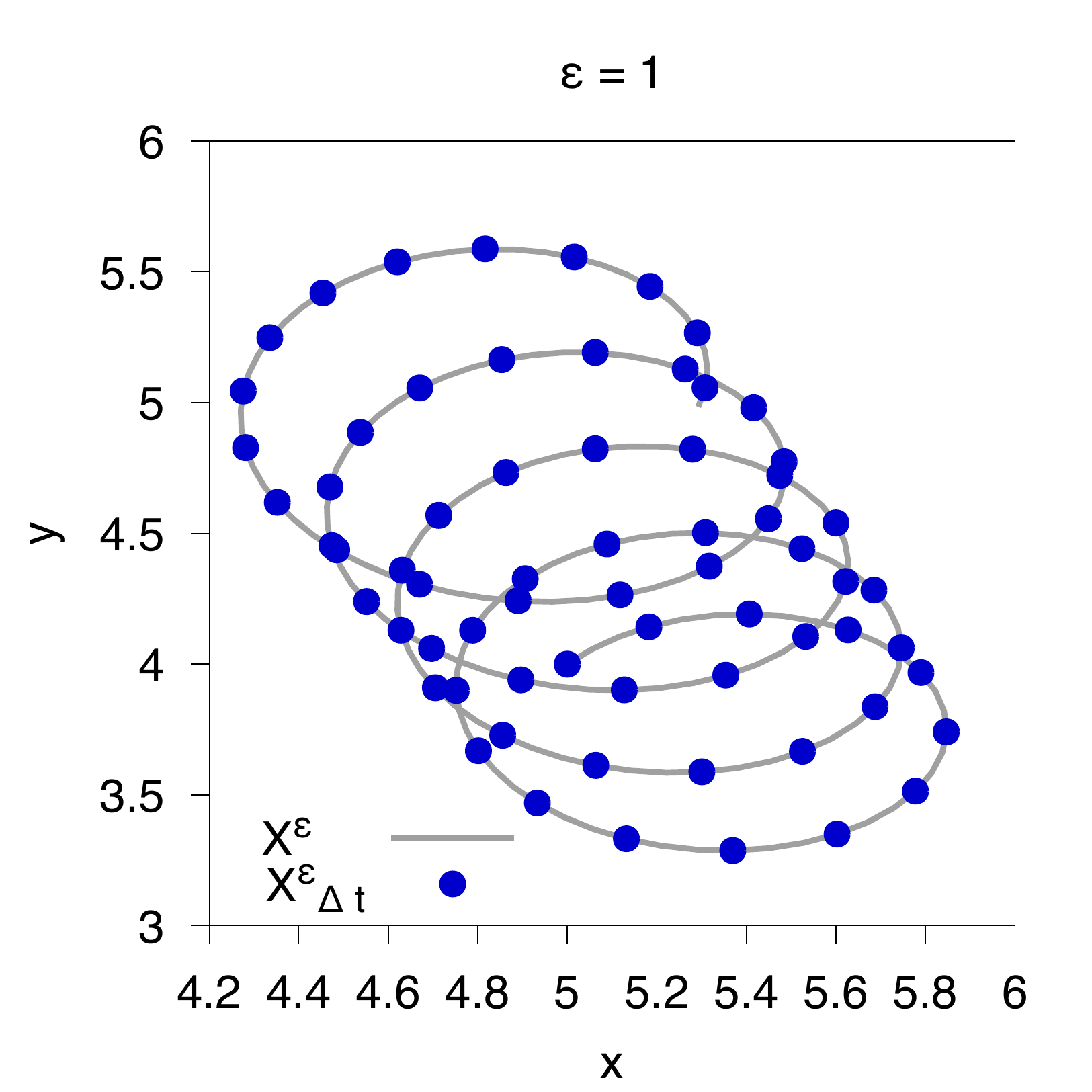} 
\\ 
(a)  & (b)  
\\  
\includegraphics[width=8.cm]{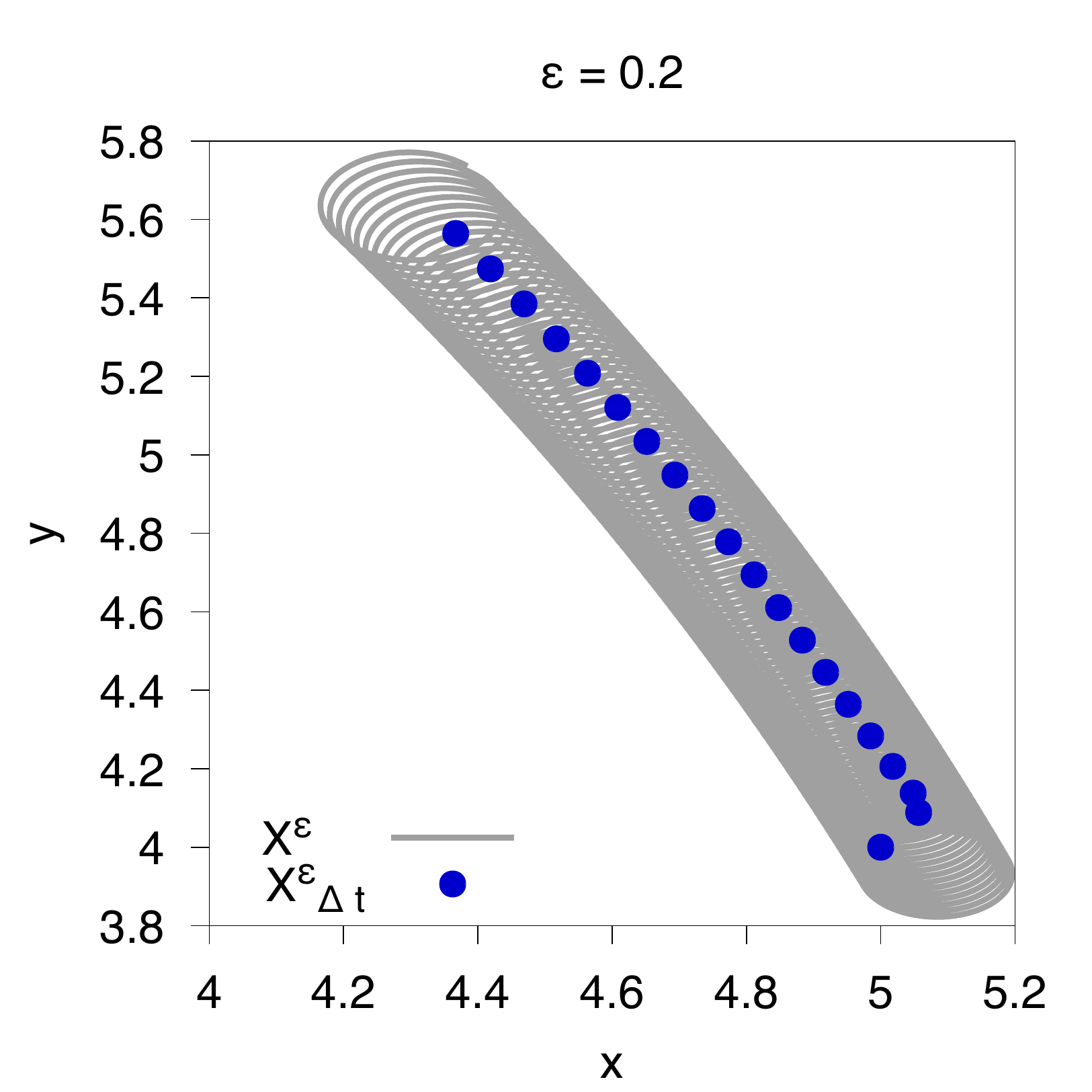} &    
\includegraphics[width=8.cm]{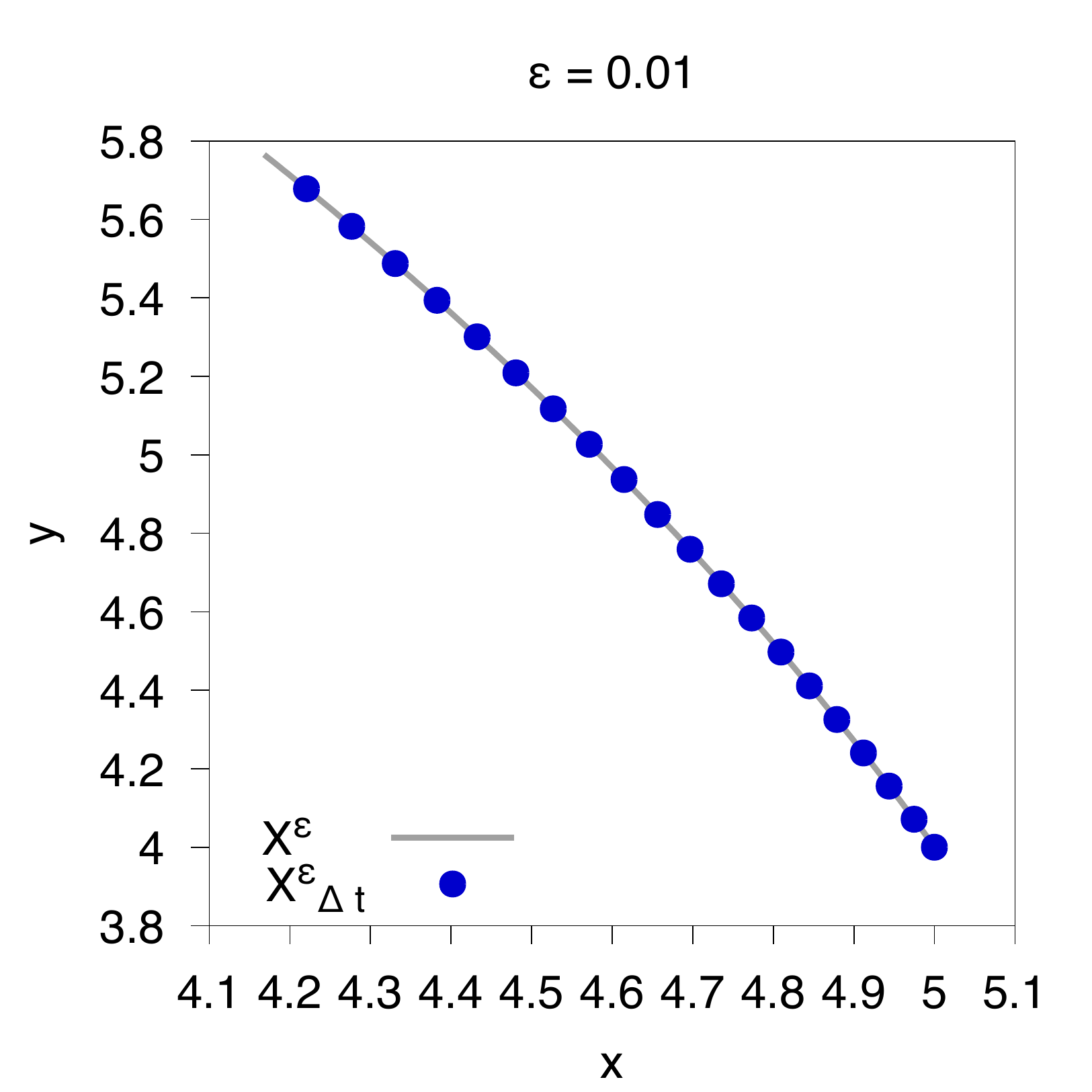}  
\\ 
(c)  & (d) 
\end{tabular}
\caption{\label{fig1:2}
{\bf One single particle motion with an  electromagnetic field.} Space trajectories
$(\bX^\eps_{\Delta t})_{\eps>0}$ for (a) $\eps=3$, (b) $\eps=1$, (c)
$\eps=0.2$ and (d) $\eps=0.01$, computed with third-order scheme \eqref{scheme:4-1}-\eqref{scheme:4-5}.}
\end{figure}
\end{center}

As a  conclusion,  these elementary numerical simulations  confirm the ability of the semi-implicit discretization to capture the evolution of the "slow" variables $(\bX^\eps,e^\eps)$ uniformly with respect to $\eps$ by essentially transitioning automatically to the the guiding center motion encoded by \eqref{traj:limit} when it is not enable anymore to follow the fast oscillations of the true evolution.

\subsection{The Vlasov-Poisson system}
We now consider the Vlasov-Poisson system \eqref{eq:vlasov2d} set in a disk  $\Omega=D(0,6)$ centered at the origin and of radius $6$. Our simulations start with an initial data that is Maxwellian in velocity and whose macroscopic density is the sum of two Gaussians, explicitly 
$$
f_0(\bx,\bv) \,=\, \frac{1}{16\pi^2} \left[
  \exp\left(-\frac{\|\bx-\bx_0\|^2}{2}\right) + \exp\left(-\frac{\|\bx+\bx_0\|^2}{2}\right)\right]\, \exp\left(-\frac{\|\bv\|^2}{4}\right),
$$ 
with $\bx_0=(3/2,-3/2)$. The parameter $\eps$ is set either to $\eps=1$, leading to
a non-stiff problem or to $\eps=0.05$, where the asymptotic regime is relevant. For both regimes we compute numerical solutions to the Vlasov-Poisson system \eqref{eq:vlasov2d}  with the third-order scheme \eqref{scheme:4-1}-\eqref{scheme:4-5} and time step $\Delta t=0.1$. Note that heuristic arguments based on single-particle simulations suggest that when $\eps=0.05$ this enforces a regime where we are close to the asymptotic limit and our schemes cannot capture fast oscillations of the true solution.

We run two sets of numerical simulations with a time-independent inhomogeneous magnetic field
$$
b\,:\quad \RR^2\to\RR\,,\qquad \bx\,\mapsto\,\frac{10}{\sqrt{10^2-\|\bx\|^2}}
$$
that is radial increasing with value one at the origin.

In Figure~\ref{fig2:0} we represent time evolutions of the total energy 
$$
\mathcal{E}(t) \,:=\,
\iint_{\Omega\times\RR^2} f(t,\bx,\bv)\,\frac{\|\bv\|^2}{2}\,\dd\bx\,\dd \bv
\,+\,\frac{1}{2}\int_{\Omega} \|\bE(t,\bx) \|^2 \dd\bx
$$
and of the adiabatic invariant 
$$
\mu(t)=\int_{\Omega}\int_{\RR^2}f(t,\bx,\bv)\,\frac{\|\bv\|^2}{2b(\bx)}
\,\dd\bx\,\dd\bv\,.
$$
As far as smooth solutions are concerned, the total energy is
preserved by both the original $\eps$-dependent model and by the
asymptotic model \eqref{eq:gc}. One goal of our experimental
observations is to check that despite the fact that our scheme
dissipates some parts of the velocity variable to reach the asymptotic
regime corresponding to \eqref{eq:gc} it does respect this
conservation. In contrast an essentially exact conservation of the
adiabatic invariant is a sign that we have reached the limiting
asymptotic regime since it does not hold for the original model but
does for the asymptotic~\eqref{eq:gc} as $b$ is time-independent and
$\bE$ is curl-free. Observe that, since $b$ is not homogeneous, even in the
asymptotic regime the kinetic and potential parts of the total
energy are not preserved separately, but the total energy
corresponding to the Vlasov-Poisson system is still
preserved. Figure~\ref{fig2:0} shows that all these features are
captured satisfactorily by our scheme even on long time evolutions
with a large time step and despite its dissipative implicit nature
when $\eps=1$ (Figure~\ref{fig2:0} left column) and also in the asymptotic regime when $\eps=0.05$
(Figure~\ref{fig2:0} right column).
  
\begin{figure}[ht!]
\begin{center}
 \begin{tabular}{cc}
\includegraphics[width=7.cm]{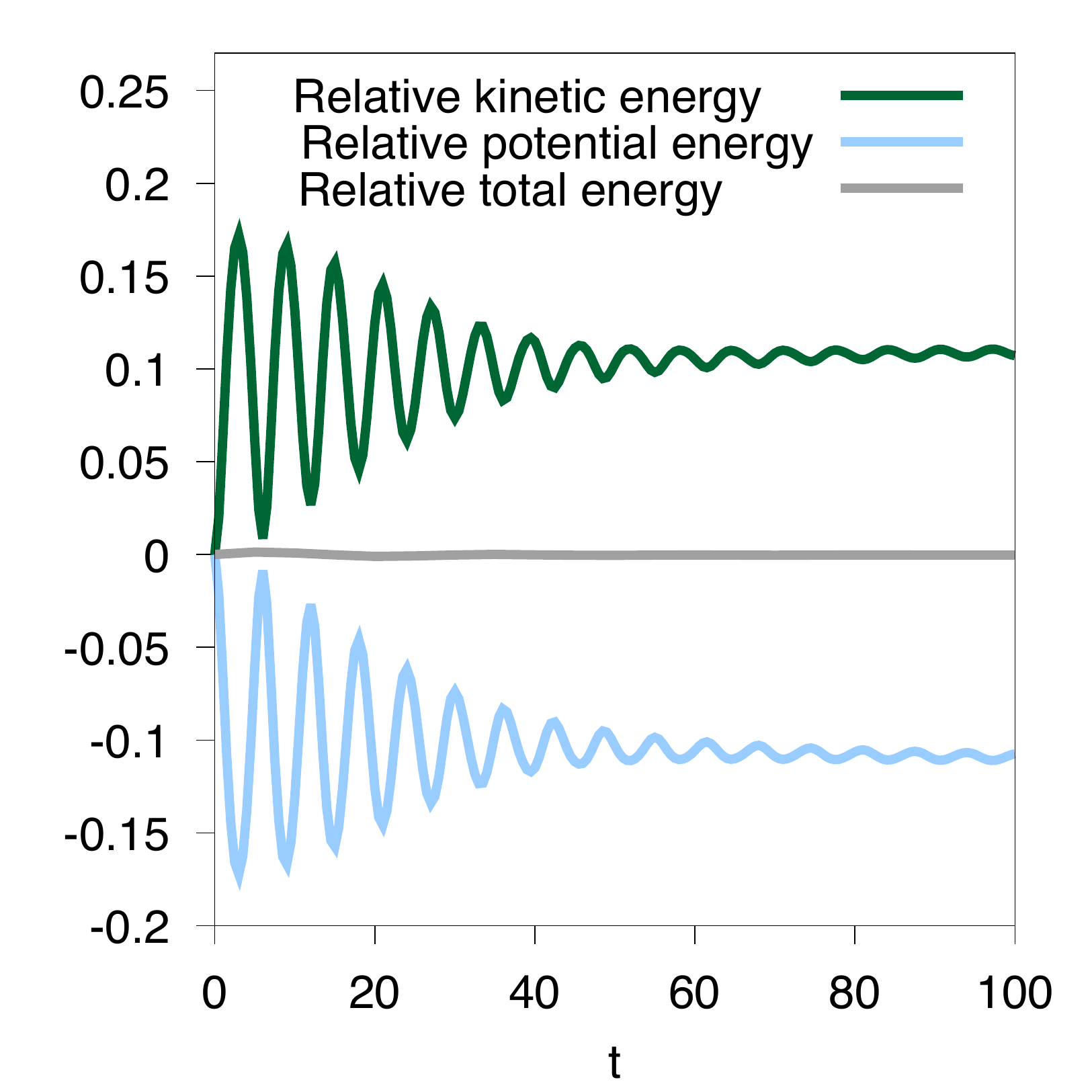} &    
\includegraphics[width=7.cm]{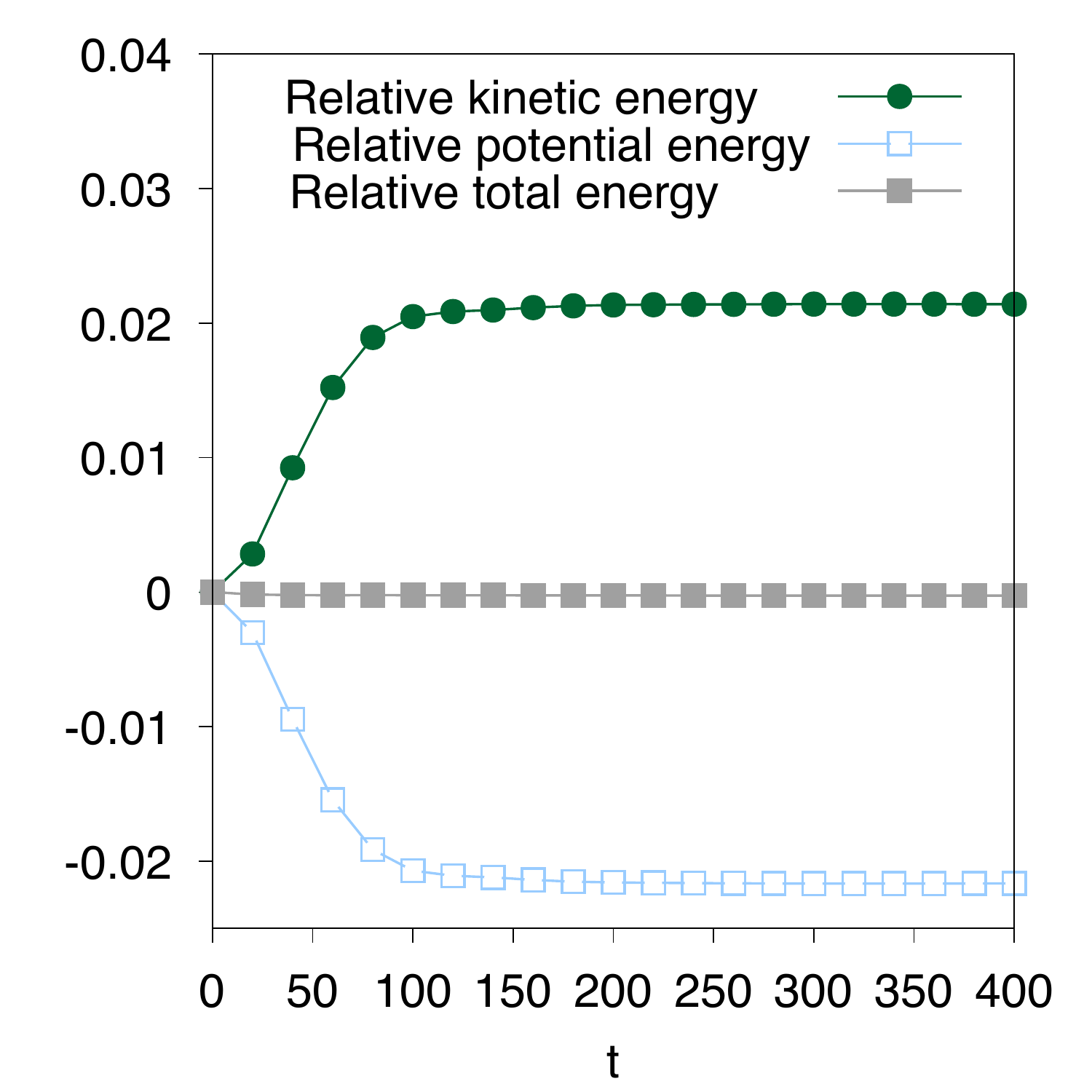} 
\\

\includegraphics[width=7.cm]{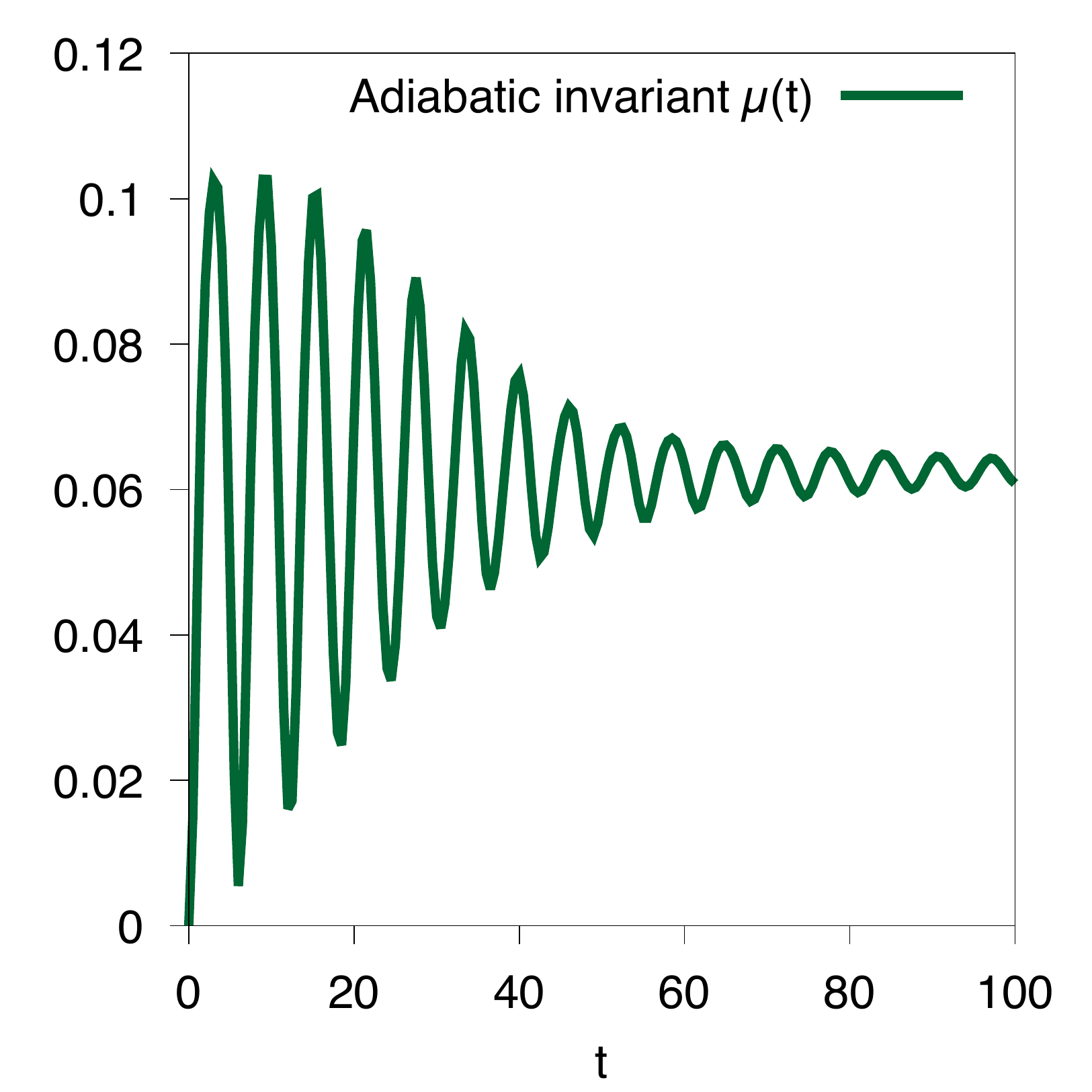} &    
\includegraphics[width=7.cm]{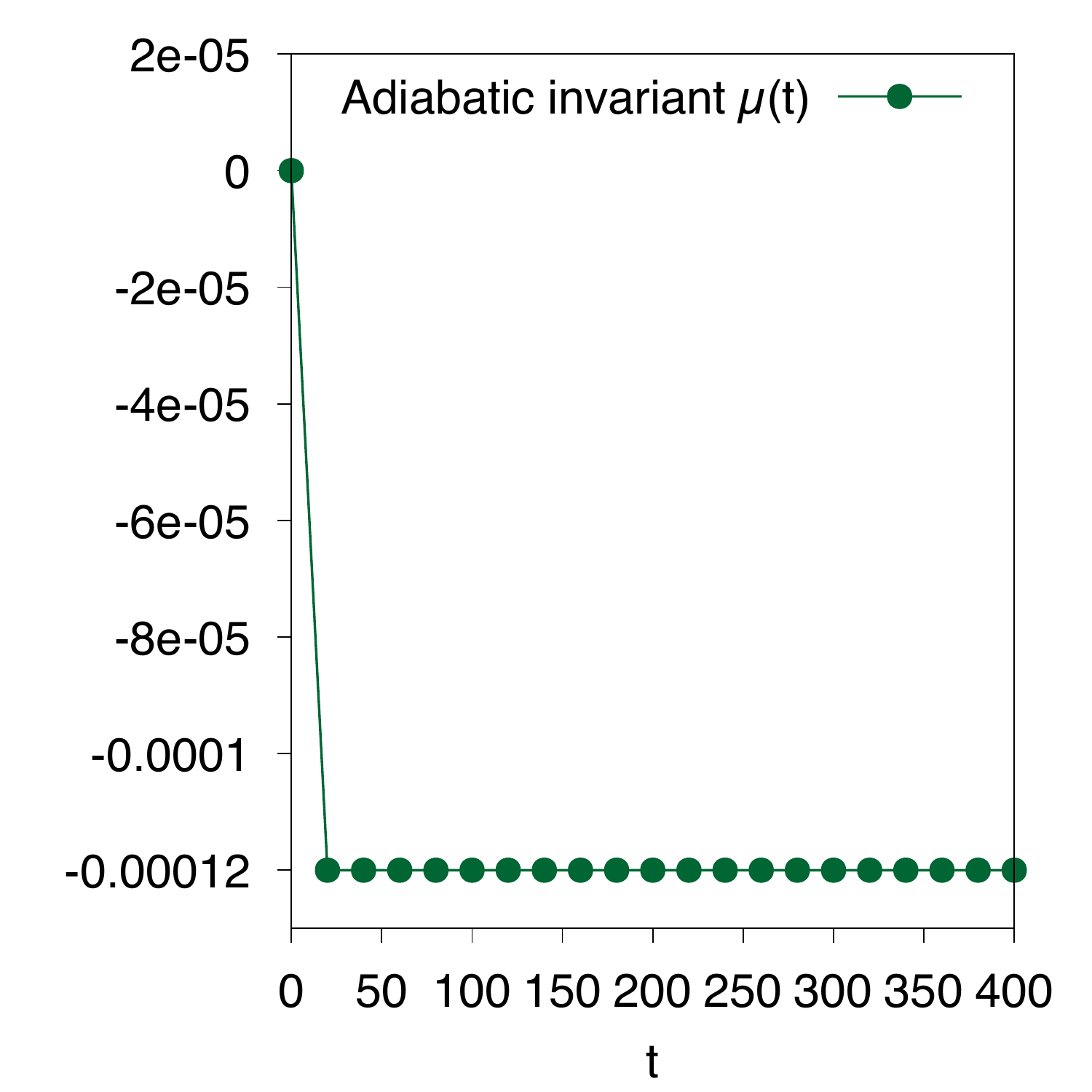} 
\\
(a) $\eps=1$ &(b) $\eps=0.05$
\end{tabular}
\caption{\label{fig2:0}
{\bf The Vlasov-Poisson system.}  Time evolution of total energy and
adiabatic invariant  with (a) $\eps=1$ and (b) $\eps=0.05$ obtained using
\eqref{scheme:4-1}-\eqref{scheme:4-5} with $\Delta t=0.1$.}
 \end{center}
\end{figure}

In Figure~\ref{fig2:1} and \ref{fig2:2} we visualize the corresponding
dynamics by presenting some snapshots of the time evolution of the
macroscopic charge density. In the first case, with $\eps=1$, the magnetic field is not sufficiently large to provide a good confinement of the macroscopic density, and some macroscopic oscillations occur as already observed on the time evolution of macroscopic
quantities. On the other hand, our inhomogeneous case does exhibit some confinement when $\eps\to0$, since the conservation of $e/b(\bx)$ offers (a weak form of) coercivity jointly in
$(\bx,e)$. Such a confinement is indeed observed when $\eps=0.05$, jointly with the expected eventual merging of two initial vortices in a relatively short time. 

\begin{figure}[ht!]
\begin{center}
 \begin{tabular}{cc}
\includegraphics[width=7.cm]{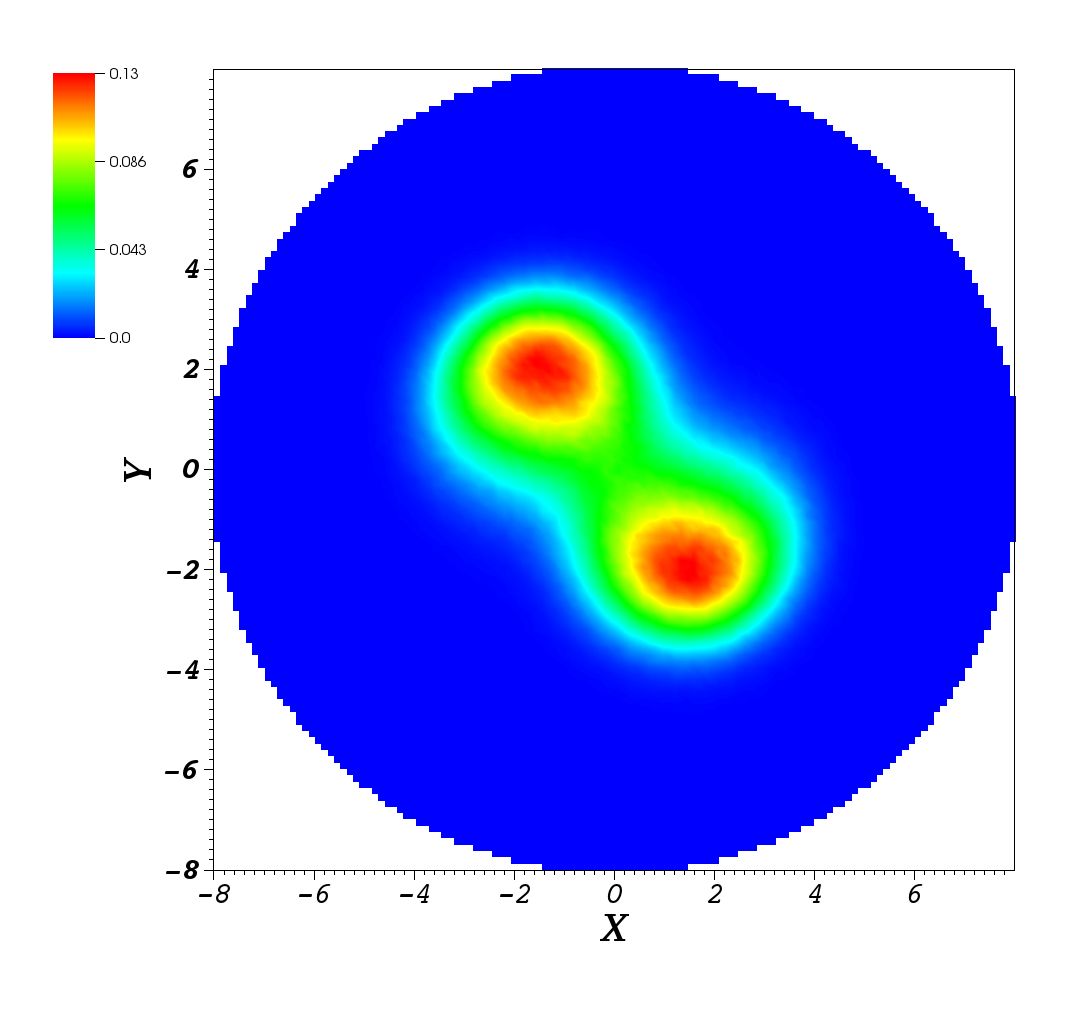} &    
\includegraphics[width=7.cm]{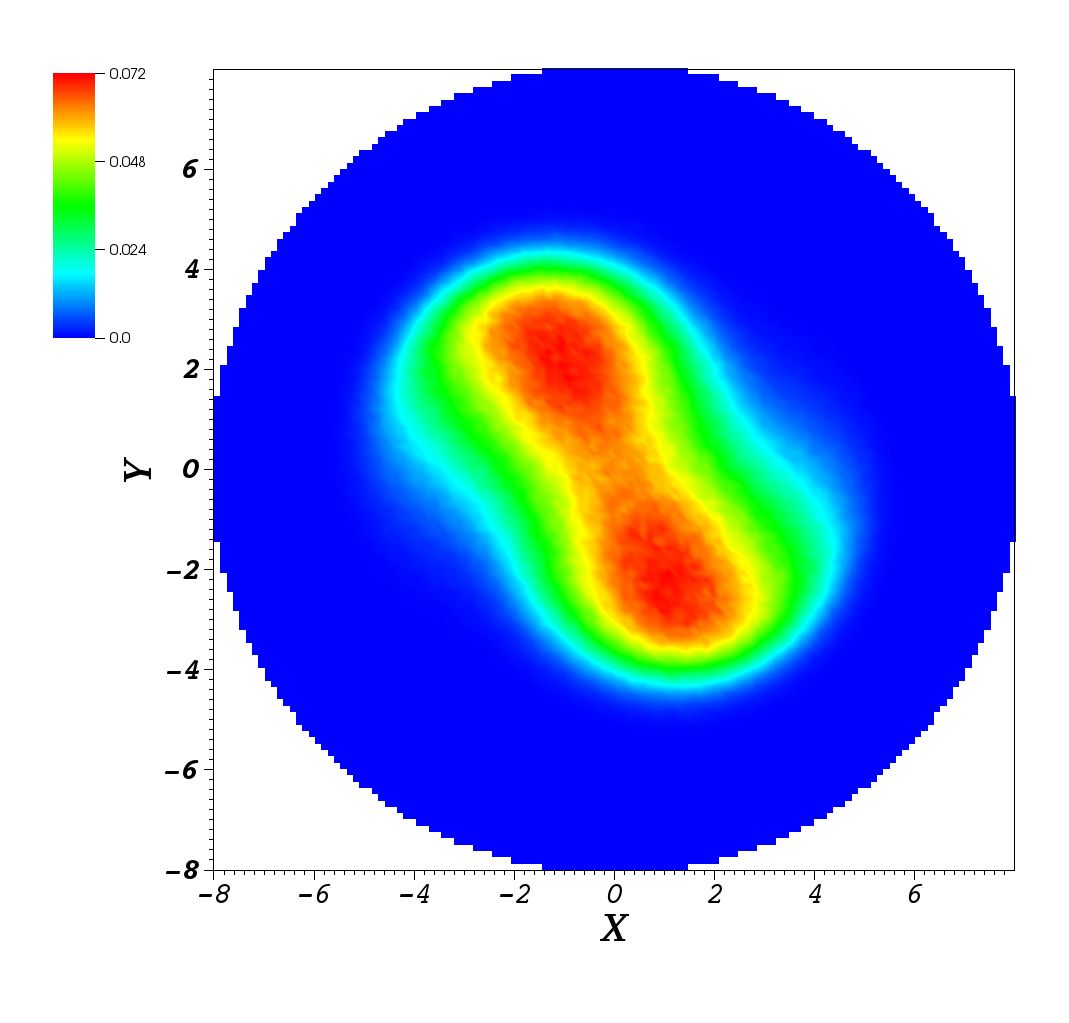} 
\\
$t=10$  & $t=20$ 
\\
\includegraphics[width=7.cm]{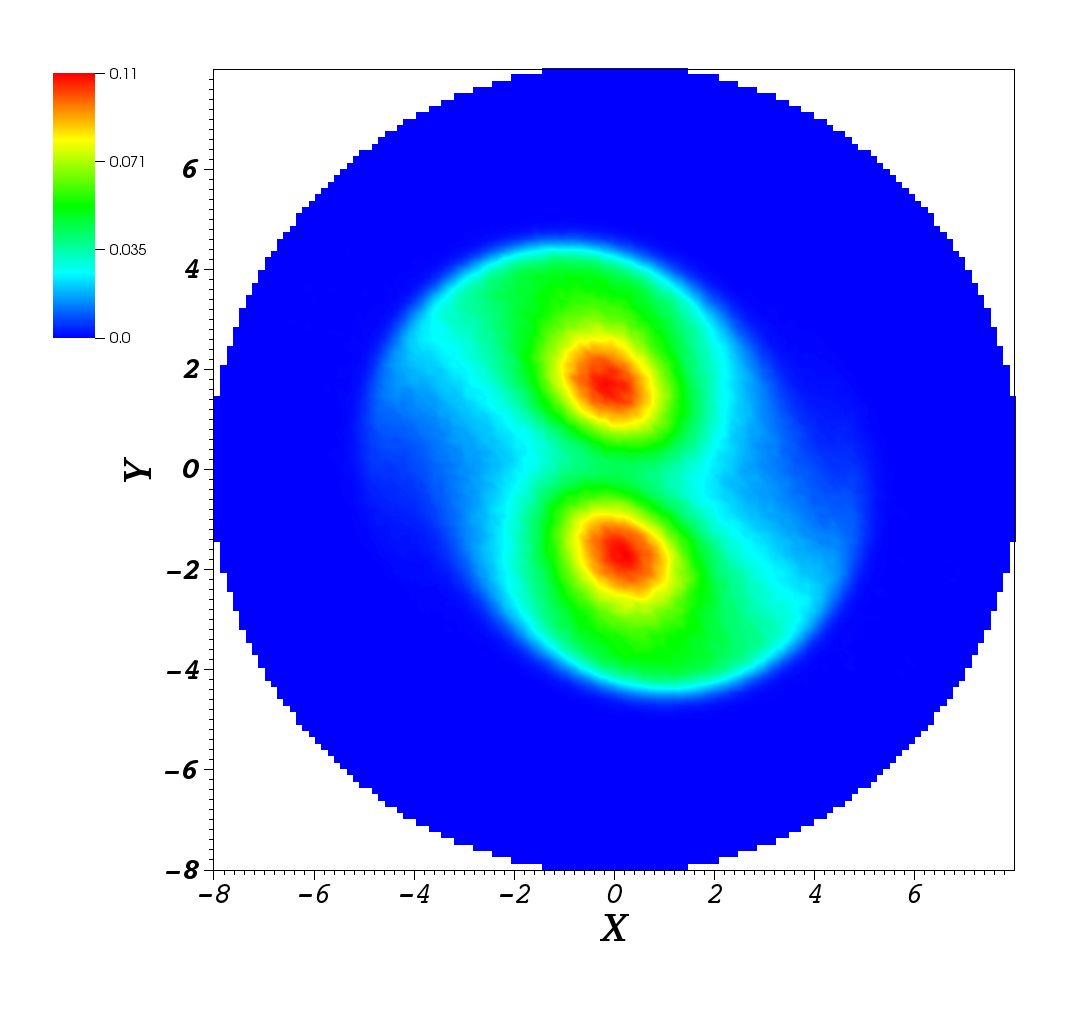} &    
\includegraphics[width=7.cm]{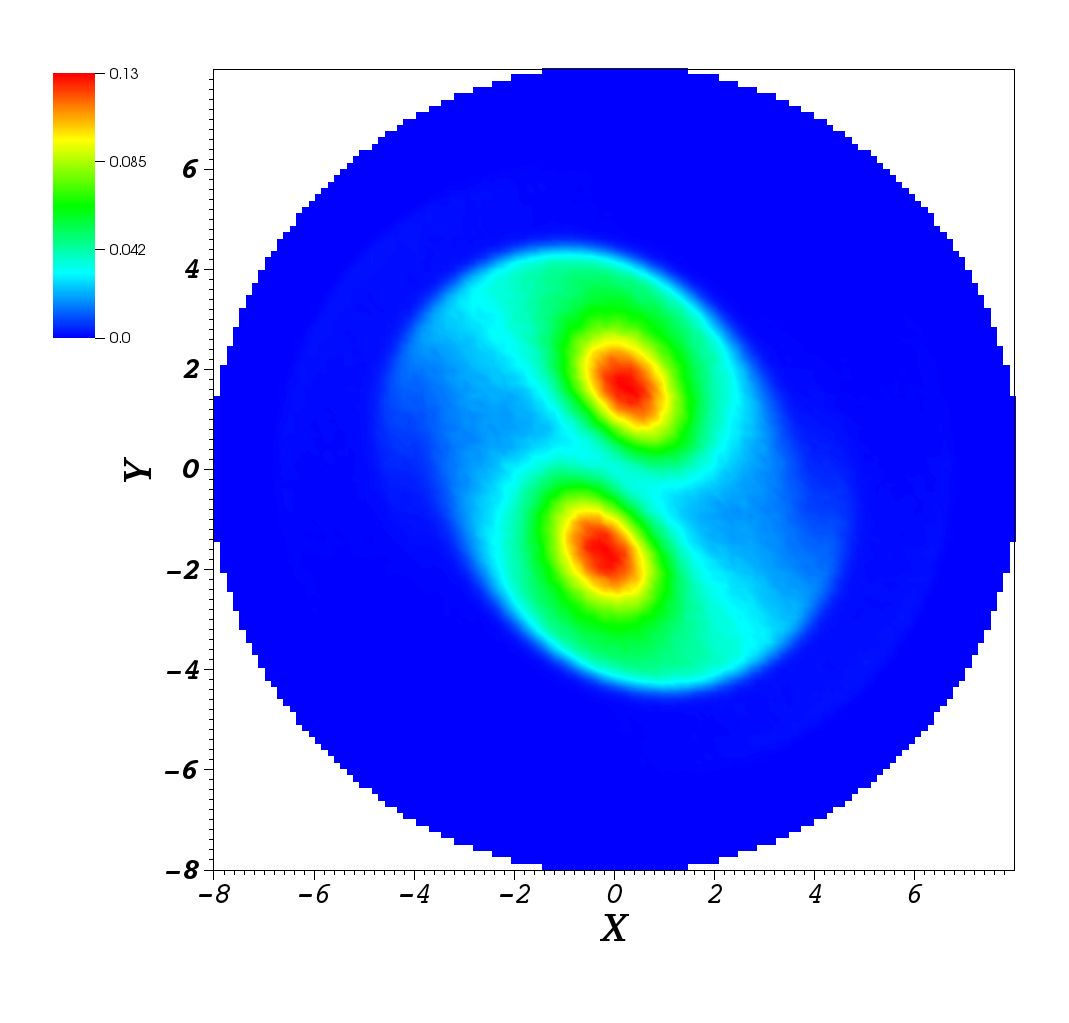} 
\\
$t=55$  & $t=80$ 
\end{tabular}
\caption{\label{fig2:1}
{\bf The Vlasov-Poisson system.}  Snapshots of the time evolution of the macroscopic charge density $\rho$ when $\eps=1$, obtained using
\eqref{scheme:4-1}-\eqref{scheme:4-5} with $\Delta t=0.1$ .}
 \end{center}
\end{figure}

\begin{figure}[ht!]
\begin{center}
 \begin{tabular}{cc}
\includegraphics[width=7.cm]{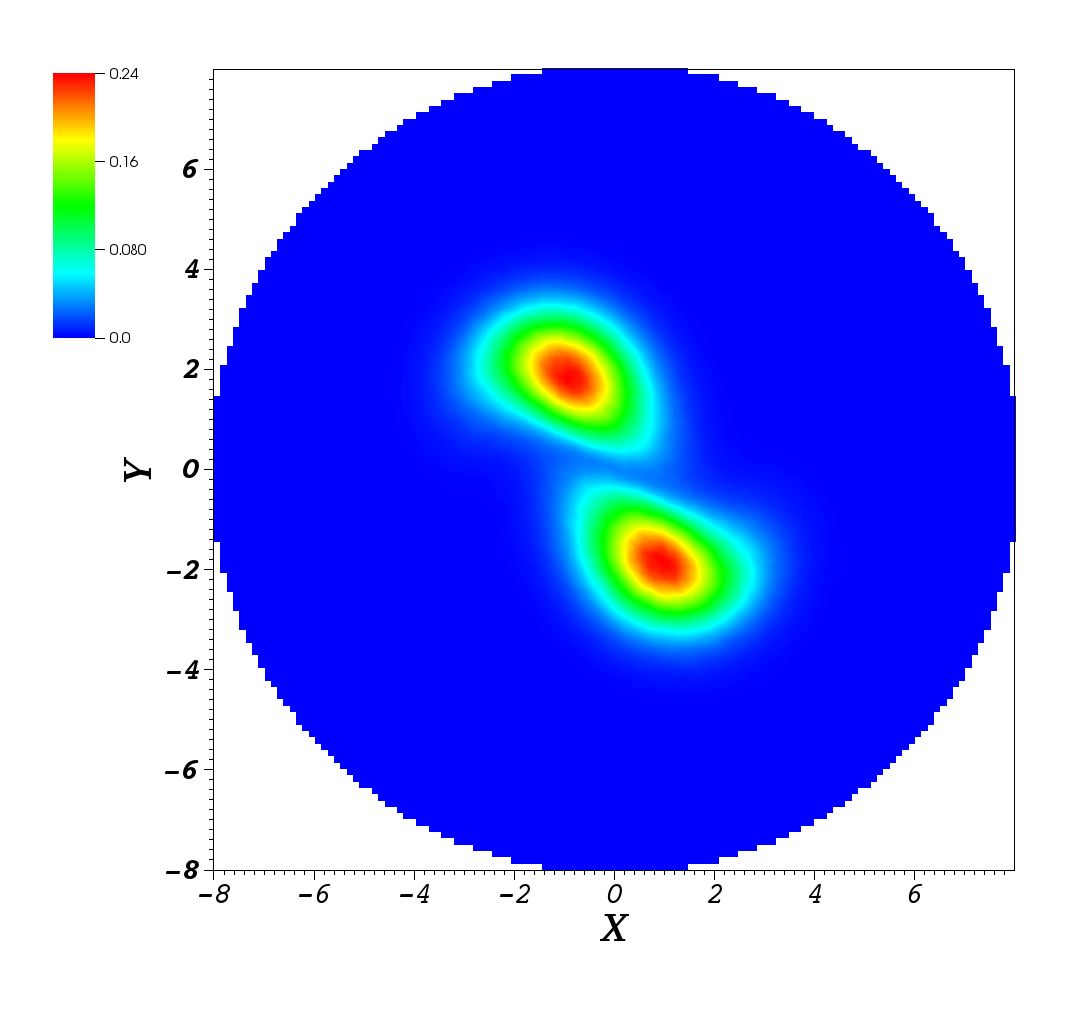} &    
\includegraphics[width=7.cm]{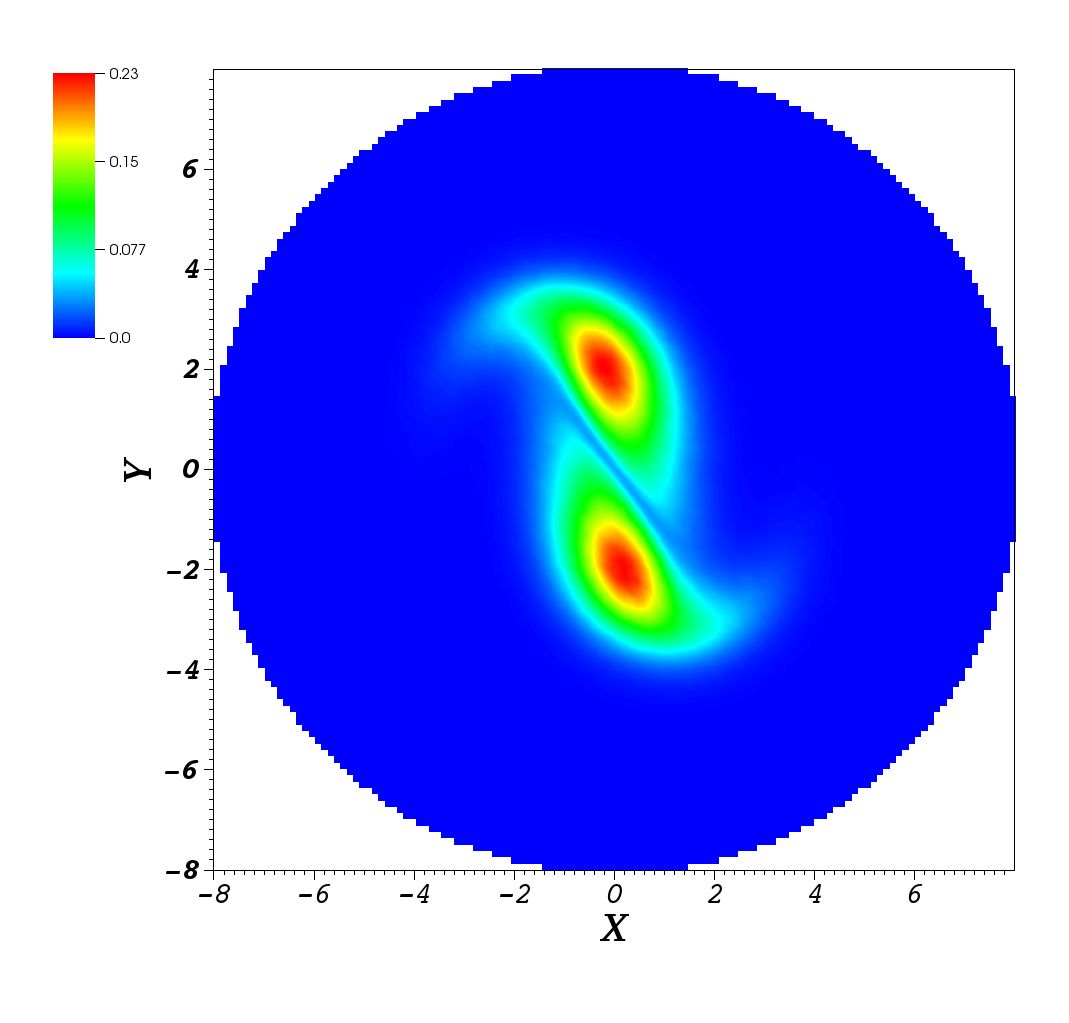} 
\\
$t=10$  & $t=20$ 
\\
\includegraphics[width=7.cm]{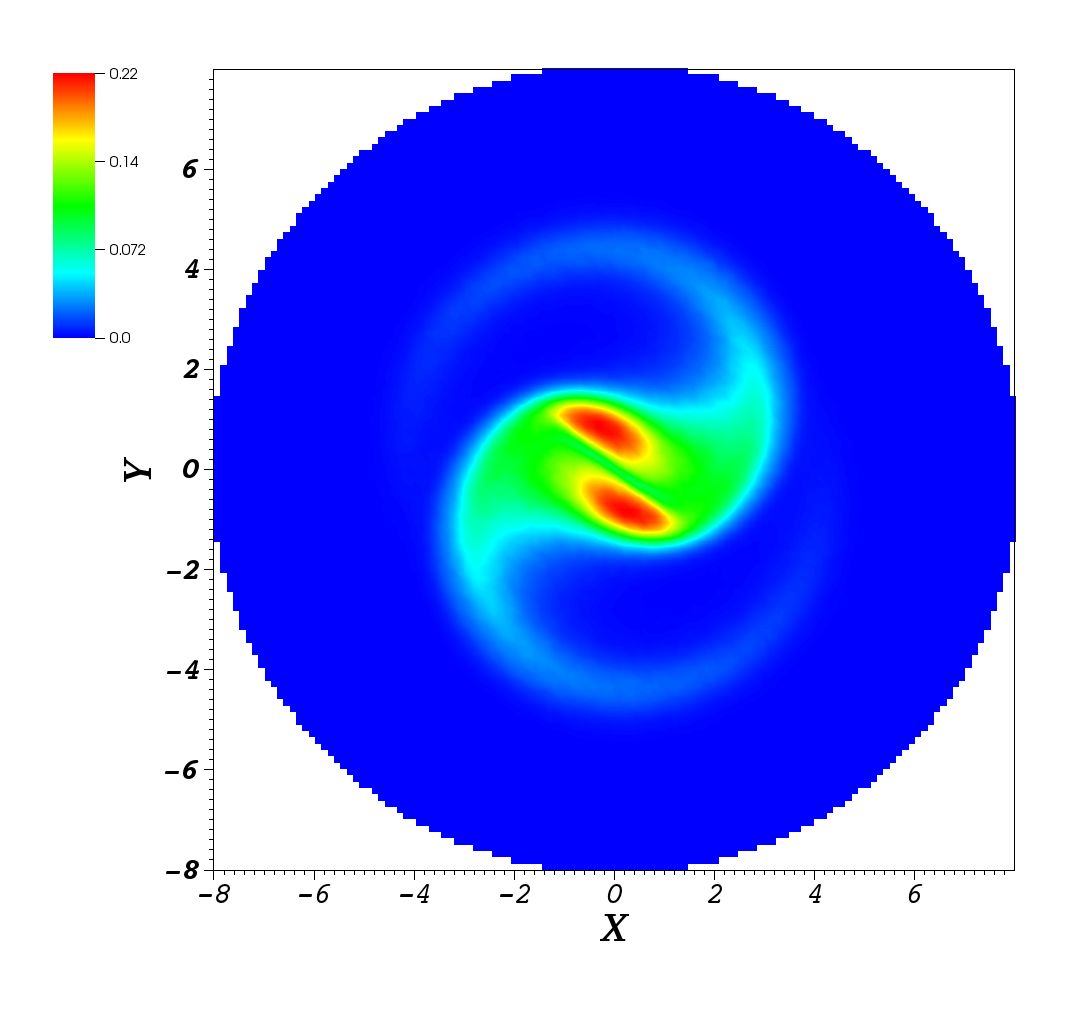} &    
\includegraphics[width=7.cm]{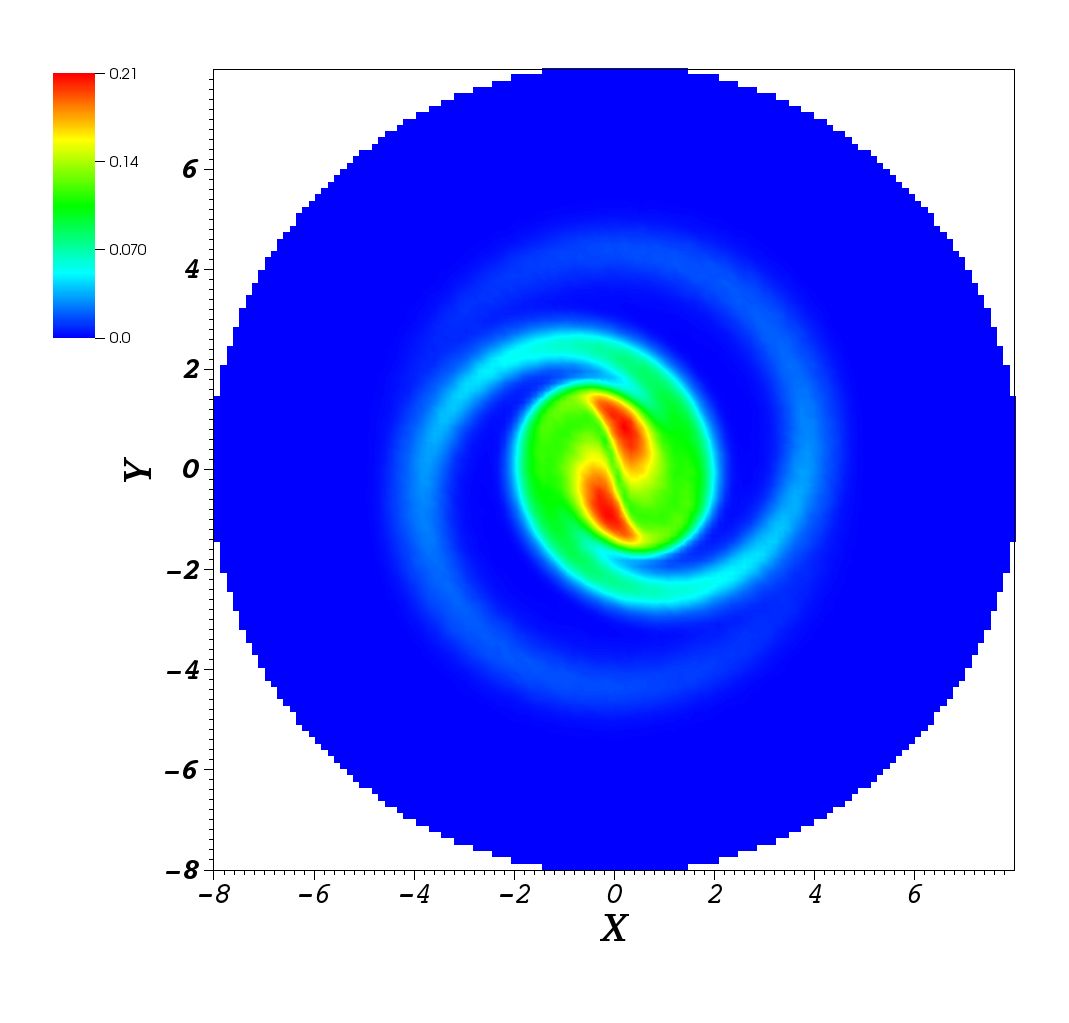} 
\\
$t=55$  & $t=80$ 
\end{tabular}
\caption{\label{fig2:2}
{\bf The Vlasov-Poisson system.}  Snapshots of the time evolution of the macroscopic charge density $\rho$ when $\eps=0.05$, obtained using
\eqref{scheme:4-1}-\eqref{scheme:4-5} with $\Delta t=0.1$ .}
 \end{center}
\end{figure}

\section{Conclusion and perspectives}
\label{sec:6}
\setcounter{equation}{0}
In the present paper we have proposed a class of semi-implicit time discretization
techniques for particle-in cell simulations. The main feature of our approach is to guarantee the accuracy and stability on slow scale variables even when the amplitude of the magnetic field becomes large, thus allowing a capture of their correct long-time behavior including cases with non homogeneous magnetic fields and coarse time grids. Even on large time simulations the obtained numerical schemes also provide an acceptable accuracy on physical invariants (total energy for any $\eps$, adiabatic invariant when $\eps\ll1$) whereas fast scales are automatically filtered when the time step is large compared to $\eps^2$.

As a theoretical validation we have proved that under some stability assumptions on numerical approximations, the slow part of the approximation converges when $\eps\rightarrow 0$ to the solution of a limiting scheme for the asymptotic evolution, that preserves the initial order of accuracy. Yet a full proof of uniform accuracy and a classification of admissible schemes --- going beyond the exposition of a few possible schemes --- remain to be carried out.

From a practical point of view, the next natural step would be to consider the genuinely three-dimensional Vlasov-Poisson system. We point out however that, despite recent progresses \cite{Cheverry,PDFF}, the asymptotic limit $\eps\to0$ is much less understood in this context, even at the continuous level with external electro-magnetic fields.

\section*{Acknowledgements}
FF was supported by the EUROfusion Consortium and has received funding
from the Euratom research and training programme 2014-2018 under grant
agreement No 633053. The views and opinions expressed herein do not
necessarily reflect those of the European Commission.

LMR was supported in part by the ANR project BoND (ANR-13-BS01-0009-01). LMR also expresses his appreciation of the hospitality of IMT, Universit\'e Toulouse III, during part of the preparation of the present contribution.

\bibliography{paper.bib}
\bibliographystyle{abbrv}

\begin{flushleft} \signFF \end{flushleft}
\begin{flushright} \signLMR \end{flushright}

\end{document}